\newtheorem{theorem}{Theorem}[section]
\theoremstyle{plain}
\newtheorem{lemma}[theorem]{Lemma}
\newtheorem{proposition}[theorem]{Proposition}
\newtheorem{definition}[theorem]{Definition}
\theoremstyle{remark}
\newtheorem{remark}[theorem]{Remark}
\numberwithin{equation}{section}
\begin{document}
\title[Inverse Scattering for the Novikov-Veselov Equation]{Miura Maps and Inverse Scattering for the Novikov-Veselov Equation}
\author{Peter A. Perry}
\thanks{Version of
\today
}
\thanks{Supported in part by NSF\ grants DMS-0710477 and DMS-1208778.}

\begin{abstract}
We use the inverse scattering method to solve the zero-energy Novikov-Veselov
(NV)\ equation for initial data of conductivity type, solving a problem posed
by Lassas, Mueller, Siltanen, and Stahel. We exploit Bogadanov's Miura-type
map which transforms solutions of the modified Novikov-Veselov (mNV) equation
into solutions of the NV\ equation. We show that the Cauchy data of
conductivity type considered by Lassas, Mueller, Siltanen, and Stahel lie in
the range of Bogdanov's Miura-type map, so that it suffices to study the mNV
equation. We solve the mNV equation using the scattering transform associated
to the defocussing Davey-Stewartson II\ equation.

\end{abstract}
\maketitle
\tableofcontents

\section{Introduction}

In this paper we will use inverse scattering methods to solve the
Novikov-Veselov (NV)\ equation, a completely integrable, dispersive nonlinear
equation in two space and one time ($2+1$) dimensions, for the class of
\emph{conductivity type} initial data that we define below. Our results solve
a problem posed by Lassas, Mueller, Siltanen, and Stahel \cite{LMSS:2011} in
their analytical study of the inverse scattering method for the NV\ equation.

Denoting $z=x_{1}+ix_{2}$, $\overline{\partial}=\left(  1/2\right)
(\partial_{x_{1}}+i\partial_{x_{2}})$, $\partial=\left(  1/2\right)
(\partial_{x_{1}}-i\partial_{x_{2}})$, the Cauchy problem for the NV\ equation
is%
\begin{align}
&  q_{t}+\partial^{3}q+\overline{\partial}^{3}q+\frac{3}{4}\partial\left(
q\overline{\partial}^{-1}\partial q\right)  +\frac{3}{4}\overline{\partial
}\left(  q\partial^{-1}\overline{\partial}q\right) \label{eq:NV}\\
\left.  q\right\vert _{t=0}  &  =q_{0}\nonumber
\end{align}
where $q_{0}$ is a real-valued function that vanishes at infinity. Up to
trivial scalings, our equation is the zero-energy ($E=0$) case of the equation%
\begin{align}
q_{t}  &  =4\operatorname{Re}\left(  4\partial^{3}q+\partial(qw)-E\partial
q\right) \label{eq:NV.E}\\
\overline{\partial}w  &  =\partial q\nonumber
\end{align}
studied by Novikov and Veselov in \cite{NV:1984,NV:1986}. If $q$ does not
depend on $y$, the zero-energy NV\ equation (\ref{eq:NV}) reduces (after
rescaling) to the Korteweg-de Vries (KdV)\ equation%
\[
q_{t}=\frac{1}{4}q_{xxx}+6qq_{x}=0.
\]

The Novikov-Veselov equation is one of a hierarchy of dispersive nonlinear
equations in $2+1$ dimensions discovered by Novikov and Veselov
\cite{NV:1984,NV:1986}. In these papers, Novikov and Veselov constructed
explicit solutions from the spectral data associated to a two-dimensional
Schr\"{o}dinger problem at a single energy. Novikov conjectured that the
inverse problem for the two-dimensional Schr\"{o}dinger operator at a fixed
energy should be completely solvable (see the remarks in \cite{Grinevich:2000}%
), and that inverse scattering for the Schr\"{o}dinger equation at a fixed
energy $E$ could be used to solve the NV\ equation at the same energy $E$ by
inverse scattering. In subsequent studies, Grinevich, Grinevich-Manakov, and
Grinevich-Novikov \cite{Grinevich:1986,GN:1986a,GM:1988,GN:1988,GN:1995}
further developed the inverse scattering method and constructed multisoliton
solutions. Independently, Boiti, Leon, Manna, and Pempinelli \cite{BLMP:1987}
proposed the inverse scattering method to solve the NV\ equation at zero
energy with data vanishing at infinity.

It has long been understood that the inverse Schr\"{o}dinger scattering
problem at zero energy poses special challenges (see, for example, the
discussion in Part I of supplement 1 in \cite{GN:1988}, and the comments in
\S 7.3 of \cite{Grinevich:2000}). In particular, the scattering transform for
the Schr\"{o}dinger operator at zero energy is known to be well-behaved
\emph{only} for a special class of potentials, the potentials of
\textquotedblleft conductivity type,\textquotedblright\ defined as follows.

\begin{definition}
A real-valued function $u\in\mathcal{C}_{0}^{\infty}(\mathbb{R}^{2})$ is
called a \emph{potential of conductivity type} if the equation $\left(
-\Delta+q\right)  \psi=0$ admits a unique, strictly positive solution
normalized so that $\psi(z)=1$ in a neighborhood of infinity.
\end{definition}

The class of conductivity type potentials can also be defined for less regular
$q$, but this definition will suffice for the present purpose. The terminology
comes from the connection of the Schr\"{o}dinger inverse problem at zero
energy with Calderon's inverse conductivity problem \cite{Calderon:1980} (see
Astala-Pa\"{\i}v\"{a}rinta \cite{AP:2006} for the solution to Calderon's
inverse problem for $\gamma\in L^{\infty}$, and for references to the
literature). The problem is to reconstruct the conductivity $\gamma$ of a
conducting body $\Omega\subset\mathbb{R}^{2}$ from the Dirichlet to Neumann
map, defined as follows. Let $f\in H^{1/2}(\Omega)$ and let $u\in H^{1}%
(\Omega)$ solve the problem%
\begin{align*}
\nabla\cdot\left(  \gamma\nabla u\right)   &  =0,\\
\left.  u\right\vert _{\partial\Omega}  &  =f.
\end{align*}
This problem has a unique solution for conductivities $\gamma\in L^{\infty
}(\Omega)$ with $\gamma(z)\geq c>0$ for a.e. $z$. The Dirichlet to Neumann map
is the mapping%
\begin{align*}
\Lambda_{\sigma}  &  :H^{1/2}(\partial\Omega)\rightarrow H^{-1/2}%
(\partial\Omega)\\
f  &  \mapsto\left.  \gamma\frac{\partial u}{\partial\nu}\right\vert
_{\partial\Omega}.
\end{align*}
Nachman \cite{Nachman:1995} exploited the fact that $\psi=\gamma^{1/2}u$
solves the Schr\"{o}dinger equation at zero energy where $q=\gamma
^{-1/2}\Delta\left(  \gamma^{1/2}\right)  $. The Schr\"{o}dinger problem also
has a Dirichlet to Neumann map defined by the unique solution of
\begin{align*}
\left(  -\Delta+q\right)  \psi &  =0\\
\left.  \psi\right\vert _{\partial\Omega}  &  =f
\end{align*}
which determines the scattering data for $q$ at zero energy. Note that $q$ is
of conductivity type if we take $\psi=\gamma^{1/2}$ and extend $\psi$ to
$\mathbb{R}^{2}\backslash\Omega$ setting $\psi(z)=1$. Nachman showed that the
scattering transform at zero energy is well-defined \emph{only} when $q$ is of
conductivity type (we give a precise statement below) and used the inverse
scattering transform to reconstruct $q$ from its scattering data.

An important fact is that, under suitable decay and regularity hypotheses, $q$
is a potential of conductivity type if and only if $q$ is a \emph{critical
potential}, i.e., a measurable function $q$ so that the quadratic form
$-\Delta+q$ is well-defined and nonnegative, but the associated
Schr\"{o}dinger operator does not have a positive Green's function. Most
importantly for our purpose, critical potentials have the following property:
if $q$ is a critical potential, then for any nonzero, nonnegative function
$W\in\mathcal{C}_{0}^{\infty}(\mathbb{R}^{2})$ and any $\varepsilon>0$,
$q-\varepsilon W$ is not critical (for a precise statement and references to
the Schr\"{o}dinger operators literature, see the paper of Gesztesy and Zhao
\cite{GZ:1995}). Thus, the set of conductivity-type potentials is nowhere
dense in any reasonable function space! For this reason one expects the direct
and inverse scattering maps for the Schr\"{o}dinger operator at zero energy
not to have good continuity properties as a function of the potential $q$.

Let us describe the direct scattering transform $\mathcal{T}$ and inverse
scattering transform $\mathcal{Q}$ for the Schr\"{o}dinger operator at zero
energy in more detail (see Nachman \cite{Nachman:1995} and Lassas, Mueller,
Siltanen, and Stahel \cite{LMSS:2011} for details and references). To define
the direct scattering map $\mathcal{T}$ on potentials $q\in\mathcal{C}%
_{0}^{\infty}(\mathbb{R}^{2})$, we seek complex geometric optics (CGO)
solutions $\psi=\psi(z,k)$ of
\begin{equation}
\left(  -\Delta+q\right)  \psi=0. \label{eq:SE1}%
\end{equation}
which satisfy the asymptotic condition%
\begin{equation}
\lim_{\left\vert z\right\vert \rightarrow\infty}e^{-ikz}\psi(z,k)=1.
\label{eq:SE2}%
\end{equation}
for a fixed $k\in\mathbb{C}$. Let $m(z,k)=e^{-izk}\psi(z,k)$. Assuming that
the problem (\ref{eq:SE1})-(\ref{eq:SE2}) has a unique solution for all~$k$,
we define the scattering transform $\mathbf{t}=\mathcal{T}q$ via the formula%
\begin{equation}
\mathbf{t}(k)=\int e^{i(\overline{k}\overline{z}+kz)}q(z)m(z,k)~dA(z)
\label{eq:t.rep}%
\end{equation}
where $dA(z)$ is Lebesgue measure on $\mathbb{R}^{2}$. The surprising fact is
that, if $\mathbf{t}$ is well-behaved, the solutions $\psi(z,k)$ may be
recovered from $\mathbf{t}(k)$. This fact leads to an inverse scattering
transform $q=\mathcal{Q}\mathbf{t}$ given by%
\begin{equation}
q(z)=\frac{i}{\pi^{2}}\overline{\partial}_{z}\left(  \int_{\mathbb{C}}%
\frac{\mathbf{t}(k)}{\overline{k}}e^{-i\left(  kz+\overline{k}\overline
{z}\right)  }\overline{m(z,k)}~dA(k)\right)  . \label{eq:q.rep}%
\end{equation}

Boiti, Leon, Manna, and Pempinelli \cite{BLMP:1987}, proposed an inverse
scattering solution to the Novikov-Veselov equation using these maps:%
\begin{equation}
q(t)=\mathcal{Q}\left(  e^{it\left(  \left(  \diamond\right)  ^{3}+\left(
\overline{\diamond}\right)  ^{3}\right)  }\left(  \mathcal{T}q_{0}\right)
\left(  \diamond\right)  \right)  \label{eq:NV.sol}%
\end{equation}
and gave formal arguments to justify it. The maps were further studied by Tsai
in \cite{Tsai:1993}. Lassas, Mueller, Siltanen, and Stahel \cite{LMSS:2011},
building on results of \cite{LMS:2007}, showed that the scattering transforms
are well-defined for certain potentials of conductivity type. For
conductivity-type potentials, Lassas et. al. proved that $\mathcal{T}$ and
$\mathcal{Q}$ are inverses, and that (\ref{eq:NV.sol}) defines a continuous
$L^{p}(\mathcal{R}^{2})$-valued function of $t$ for $p\in(1,2)$. They
conjectured that $q(t)$ is in fact a classical solution of (\ref{eq:NV}) if
$q_{0}$ is a smooth, decreasing, real-valued potential of conductivity type
but were unable to prove that this was the case.

The fact already mentioned, that conductivity-type potentials are a nowhere
dense set in the space of potentials, suggests that studying the NV equation
using the maps $\mathcal{T}$ and $\mathcal{Q}$ is likely to be technically
challenging. The following result of Nachman (\cite{Nachman:1995}, Theorem 3)
makes the difficulty clearer. For given $q$, let $\mathcal{E}_{q}$ be the set
of all $k$ for which the problem (\ref{eq:SE1})-(\ref{eq:SE2}) does \emph{not}
have a unique solution. Let $L_{\rho}^{p}(\mathbb{R}^{2})$ denote the Banach
space of real-valued measurable functions $q$ with $\left\Vert q\right\Vert
_{L_{\rho}^{p}}=\left[  \int\left(  1+\left\vert z\right\vert \right)
^{p\rho}\left\vert q(z)\right\vert ^{p}~dA(z)\right]  ^{1/p}$.

\begin{theorem}
\cite{Nachman:1995} Suppose that $q\in L_{\rho}^{p}(\mathbb{R}^{2})$ for some
$p\in(1,2)$, and $\rho>1$: The following are equivalent:\newline(i) The set
$\mathcal{E}_{q}$ is empty and $\left\vert \mathbf{t}\left(  k\right)
\right\vert \leq C\left\vert k\right\vert ^{\varepsilon}$ for some fixed
$\varepsilon>0$ and all sufficiently small $k$.\newline(ii) There is a
real-valued function $\gamma\in L^{\infty}(\mathbb{R}^{2})$ with
$\gamma(z)\geq c>0$ for a.e. $z$ and a fixed constant $c$ so that
$q=\gamma^{-1/2}\Delta\left(  \gamma^{1/2}\right)  $.
\end{theorem}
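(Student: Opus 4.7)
The plan is to prove the two implications separately. The direction (ii)$\Rightarrow$(i) is essentially a consequence of the positivity and structure of conductivity-type potentials, while (i)$\Rightarrow$(ii) is the substantive reconstruction step, with the main difficulty concentrated at the spectral origin $k=0$.

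For (ii)$\Rightarrow$(i), I would start from the observation that, since $q=\gamma^{-1/2}\Delta(\gamma^{1/2})$ with $\gamma\geq c>0$ and $\gamma\equiv 1$ outside a compact set, the function $\psi_{0}(z):=\gamma(z)^{1/2}$ is a positive solution of $(-\Delta+q)\psi_{0}=0$ on $\mathbb{R}^{2}$. Making the ansatz $\psi(z,k)=\psi_{0}(z)\varphi(z,k)$ in (\ref{eq:SE1})--(\ref{eq:SE2}) converts the Schr\"odinger equation into the divergence-form conductivity equation $\nabla\cdot(\gamma\nabla\varphi)=0$ with asymptotic $\varphi(z,k)\sim e^{ikz}/\psi_{0}(z)$ at infinity. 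Unique solvability of this equation for every $k\in\mathbb{C}$ follows from standard elliptic theory together with a Liouville-type argument for the associated Beltrami equation, using strict positivity of $\gamma$. This gives $\mathcal{E}_{q}=\emptyset$. To obtain the bound $|\mathbf{t}(k)|\leq C|k|^{\varepsilon}$ near $k=0$, I would substitute $\psi=\psi_{0}\varphi$ into (\ref{eq:t.rep}) and use $(-\Delta+q)\psi_{0}=0$ to integrate by parts, producing an expression in which the integrand manifestly vanishes at $k=0$; H\"older control near zero then follows from continuous dependence of $\varphi(\cdot,k)$ on $k$.

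For (i)$\Rightarrow$(ii), the main tool is the $\overline{\partial}_{k}$-equation
\[
\frac{\partial m}{\partial\overline{k}}(z,k)=\frac{\mathbf{t}(k)}{4\pi\overline{k}}\,e^{-i(kz+\overline{k}\overline{z})}\,\overline{m(z,k)}
\]
satisfied by $m(z,k)=e^{-ikz}\psi(z,k)$. Under hypothesis (i), emptiness of $\mathcal{E}_{q}$ gives $m(z,k)$ for all $k\neq 0$, and the bound $|\mathbf{t}(k)|\leq C|k|^{\varepsilon}$ makes the right-hand side of the $\overline{\partial}_{k}$-equation locally integrable at $k=0$. Solving this equation in $k$ via the Cauchy transform (as in the inverse scattering formula (\ref{eq:q.rep})) extends $m(z,\cdot)$ continuously across $k=0$, producing a well-defined function $\mu(z):=m(z,0)$ with $\mu(z)\to 1$ as $|z|\to\infty$. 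The real-valuedness of $\mu$ follows from the conjugation symmetry of the $\overline{\partial}_{k}$-system inherited from $q$ being real.

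The decisive step, and the main obstacle, is to prove that $\mu$ is \emph{strictly positive} on $\mathbb{R}^{2}$; once this is known, I would set $\gamma:=\mu^{2}$, note that $\gamma\geq c>0$ and $\gamma\equiv 1$ near infinity, and verify from $(-\Delta+q)\mu=0$ that $q=\mu^{-1}\Delta\mu=\gamma^{-1/2}\Delta(\gamma^{1/2})$, which is (ii). Positivity of $\mu$ is subtle because it is a global statement that cannot be obtained from a maximum principle for $(-\Delta+q)\mu=0$ alone (the sign of $q$ is unrestricted); the natural route is to show that $\mu$ solves a Beltrami-type equation in $z$ whose coefficients are controlled by $\mathbf{t}$, and then apply the Stoilow factorization to write $\mu=e^{v}\circ f$ for a quasiconformal homeomorphism $f$ and a bounded real function $v$, yielding $\mu>0$. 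The hypothesis $|\mathbf{t}(k)|\leq C|k|^{\varepsilon}$ is precisely what controls these Beltrami coefficients near $k=0$ and forces $\mu$ never to vanish. I expect this positivity argument --- the zero-energy feature with no nonzero-energy analogue --- to be the hardest part of the proof.
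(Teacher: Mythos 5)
First, note that the paper does not actually prove this statement: it is quoted from Nachman (Theorem 3 of \cite{Nachman:1995}), so your attempt has to be measured against Nachman's argument rather than anything in this paper. Your overall architecture does match his: (ii)$\Rightarrow$(i) by substituting $\psi=\gamma^{1/2}\varphi$ and reducing to the conductivity equation $\nabla\cdot(\gamma\nabla\varphi)=0$, for which CGO solutions exist and are unique for every $k$, together with an integration by parts to get the $|k|^{\varepsilon}$ bound; (i)$\Rightarrow$(ii) by using the $\overline{\partial}_{k}$-equation for $m$ and the subexponential bound on $\mathbf{t}$ at the origin to continue $m(z,\cdot)$ to $k=0$, then showing $\mu(z)=m(z,0)$ is a real, positive solution tending to $1$, so that $\gamma=\mu^{2}$ works. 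You also correctly identify positivity of $\mu$ as the crux. (A small overreach in (ii)$\Rightarrow$(i): hypothesis (ii) gives only $\gamma\in L^{\infty}$ with $\gamma\geq c>0$, not $\gamma\equiv 1$ outside a compact set; the decay you use must instead be extracted from $q\in L_{\rho}^{p}$.)

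The genuine gap is the mechanism you propose for positivity. You want to realize $\mu$ as a solution of a ``Beltrami-type equation in $z$'' and invoke the Stoilow factorization. But $m(\cdot,k)$, and in particular $\mu=m(\cdot,0)$, satisfies only the second-order scalar equation $(-\Delta+q)(e^{ikz}m)=0$ in the $z$ variable; there is no first-order Beltrami system in $z$ attached to the scalar Schr\"{o}dinger problem (that structure belongs to the first-order Dirac/Davey--Stewartson system used elsewhere in this paper, or to the conductivity equation once $\gamma$ is already in hand---which is exactly what you are trying to produce). So Stoilow factorization is not available and this step fails as written. Nachman's actual argument runs in the $k$ variable: for each fixed $z$, the function $k\mapsto m(z,k)$ satisfies the pseudoanalytic equation $\overline{\partial}_{k}m=\frac{\mathbf{t}(k)}{4\pi k}e^{-i(kz+\overline{k}\overline{z})}\overline{m}$, whose coefficient lies in $L^{p}\cap L^{p'}$ of the $k$-plane precisely because $|\mathbf{t}(k)|\leq C|k|^{\varepsilon}$ near $0$ (with decay of $\mathbf{t}$ at infinity coming from $q\in L_{\rho}^{p}$). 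Setting $\nu=\frac{\mathbf{t}(k)}{4\pi k}e^{-i(kz+\overline{k}\overline{z})}\overline{m}/m$ where $m\neq0$ and $\nu=0$ elsewhere, one finds that $me^{-P_{k}\nu}$ is holomorphic in $k$ and tends to $1$, hence $m(z,k)=e^{(P_{k}\nu)(k)}$ is nowhere vanishing; in particular $\mu(z)=m(z,0)\neq0$ for every $z$, and reality, continuity in $z$, and $\mu\to1$ at infinity then force $\mu>0$. This Vekua-type exponential representation in the $k$-plane is the missing idea: your instinct that the $|k|^{\varepsilon}$ condition is what makes the relevant coefficient integrable at the origin is exactly right, but it has to be deployed in $k$, not in $z$.
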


One should think of $\gamma$ as $\psi^{2}$ where $\psi$ is the unique
normalized positive solution of $\,(-\Delta+q)\psi=0$ for a potential of
conductivity type. Nachman's result suggests that non-conductivity type
potentials will have singular scattering transforms: in \cite{MPS:2012},
Music, Perry, and Siltanen construct an explicit one-parameter deformation
$\lambda\mapsto q_{\lambda}$ of a conductivity type potentials ($q_{0}$ is of
conductivity type, but $q_{\lambda}$ is not for $\lambda\neq0$) for which the
corresponding family $\lambda\mapsto\mathbf{t}_{\lambda}$ of scattering
transforms has an essential singularity at $\lambda=0$.

We will show that, nonetheless, the formula (\ref{eq:NV.sol}) does yield
classical solutions of the NV\ equation for a much larger class of initial
data than considered in \cite{LMSS:2011}. We achieve this result by
circumventing the scattering maps studied in \cite{LMSS:2011}. Instead, we
exploit Bogdanov's \cite{Bogdanov:1987} observation that the Miura-type map%
\begin{equation}
\mathcal{M}(v)=2\partial v+\left\vert v\right\vert ^{2} \label{eq:Miura2}%
\end{equation}
takes solutions $u$ of the modified Novikov-Veselov (mNV) equation to
solutions $q$ of the NV\ equation with initial data of conductivity type.
Here, the domain of the Miura map is understood to be smooth functions $v$
with $\partial v=\overline{\partial v}$. We will show that the range of
$\mathcal{M}$ contains the conductivity-type potentials studied by Lassas,
Mueller, Siltanen and Stahel.

Thus, to solve the NV\ equation for initial data of conductivity type, it
suffices to solve the mNV equation and use the map $\mathcal{M}$ to obtain a
solution of NV. \ The mNV\ equation is a member of the Davey-Stewartson
II\ (DS\ II)\ hierarchy, so the well-known scattering maps for the DS
II\ hierarchy (see \cite{P} and reference therein) can be used to solve to
solve the Cauchy problem for mNV. We denote by $\mathcal{R}$ and $\mathcal{I}$
respectively the scattering transform and inverse scattering transform
associated to the defocussing Davey-Stewartson II\ equation (see
\S \ref{sec:IR.def} for the definitions). We show in Appendix \ref{app:mNV}
that the function%
\begin{equation}
u(t)=\mathcal{I}\left(  \exp\left(  \left(  \overline{\diamond}^{3}%
-\diamond^{3}\right)  t\right)  \left(  \mathcal{R}u_{0}\right)
(\diamond)\right)  \label{eq:mNV.sol}%
\end{equation}
is a classical solution of the mNV\ equation for initial data $u_{0}%
\in\mathcal{S}(\mathbb{R}^{2})$.

In order to obtain good mapping properties for the solution map $u_{0}\mapsto
u(t)$ defined by (\ref{eq:mNV.sol}), we need local Lipschitz continuity of the
maps $\mathcal{I}$ and $\mathcal{R}$ on spaces that are preserved under the
flow (compare the treatment of the cubic NLS\ in one dimension by Deift-Zhou
\cite{DZ:2003}, and the Sobolev mapping properties for the scattering maps for
NLS proven in \cite{Zhou:1998}). In \cite{P} it was shown that $\mathcal{R}$
and $\mathcal{I}$ are mutually inverse mappings of $H^{1,1}(\mathbb{R}^{2})$
into itself where%
\[
H^{m,n}(\mathbb{R}^{2})=\left\{  u\in L^{2}(\mathbb{R}^{2}):(1-\Delta
)^{m/2}u,~\left(  1+\left\vert ~\cdot~\right\vert \right)  ^{n}u(~\cdot~)\in
L^{2}(\mathbb{R}^{2})\right\}  .
\]

In order to use (\ref{eq:mNV.sol}), we need the following refined mapping
property of $\mathcal{I}$ and $\mathcal{R}$.

\begin{theorem}
\label{thm:Lip}The scattering maps $\mathcal{R}$ and $\mathcal{I}$ restrict to
locally Lipschitz continuous maps%
\begin{align*}
\mathcal{R}  &  :H^{2,1}(\mathbb{R}^{2})\rightarrow H^{1,2}(\mathbb{R}^{2}),\\
\mathcal{I}  &  :H^{1,2}(\mathbb{R}^{2})\rightarrow H^{2,1}(\mathbb{R}^{2}).
\end{align*}

\end{theorem}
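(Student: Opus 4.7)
The plan is to bootstrap from the $H^{1,1}\to H^{1,1}$ mapping properties of $\mathcal{R}$ and $\mathcal{I}$ established in \cite{P}, exploiting the Fourier-like duality between the spatial variable $z$ and the spectral variable $k$ that is built into the scattering transform for the defocussing DS II equation. Heuristically, each $z$-derivative of $u$ should buy one unit of $k$-decay of $s=\mathcal{R}u$, and each unit of $z$-decay of $u$ should buy one $k$-derivative of $s$. Moving from $H^{1,1}\to H^{1,1}$ to $H^{2,1}\to H^{1,2}$ therefore amounts to converting one additional $z$-derivative of $u$ into one additional $k$-weight on $s$, and symmetrically in the inverse direction.

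Recall that $\mathcal{R}u$ has an integral representation of the schematic form
\[
s(k)=\int_{\mathbb{C}} e^{-i(kz+\overline{k}\,\overline{z})}\,\overline{u(z)}\,m_{1}(z,k)\,dA(z),
\]
where $(m_{1},m_{2})$ is the normalized CGO solution of the Beals--Coifman Dirac system with potential $u$. To show $(1+|k|)^{2}s\in L^{2}(dA(k))$, I would integrate by parts twice in $z$, using $\overline{k}\,e^{-i(kz+\overline{k}\,\overline{z})}=i\partial_{z} e^{-i(kz+\overline{k}\,\overline{z})}$ and the conjugate identity, transferring two derivatives onto $\overline{u}\,m_{1}$. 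The Beals--Coifman system expresses $\partial m_{1}$ and $\overline{\partial}m_{1}$ algebraically in $u,\overline{u},m_{1},m_{2}$, so iterating produces an integrand controlled by $\|u\|_{H^{2,1}}$ together with the CGO bounds of \cite{P}. To show $\partial_{k}s,\overline{\partial}_{k}s\in L^{2}(dA(k))$, I would differentiate under the integral sign: the exponential contributes a factor of $i\overline{z}$ (respectively $iz$) against which the weight on $u$ in $H^{2,1}\subset H^{0,1}$ suffices, while the remaining $k$-derivatives of $m_{1}$ are controlled by differentiating the CGO integral equation in $k$. Combining gives $\mathcal{R}u\in H^{1,2}$ with the desired estimate.

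For $\mathcal{I}:H^{1,2}\to H^{2,1}$, I would use the analogous integral formula for $u=\mathcal{I}s$ through a CGO solution of the dual Beals--Coifman system in which $k$ plays the spatial role and $s$ is the potential. The roles of $z$ and $k$, and of derivatives and weights, are simply interchanged, so two $z$-derivatives of $u$ come from two integrations by parts in $k$ against the $k$-weights of $s$, and one $z$-weight on $u$ comes from one $k$-differentiation of the integral. Local Lipschitz continuity in both directions follows by applying these same estimates to the differences $m_{j}(\cdot,\cdot;u_{1})-m_{j}(\cdot,\cdot;u_{2})$ (respectively, for the dual system), whose control is built into the contraction-mapping argument of \cite{P} for the Beals--Coifman equation.

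The main obstacle is propagating the extra regularity or decay through the CGO solutions with bounds that are locally uniform in the potential. Concretely, one must show that $\partial_{k}m_{1}$, $\overline{\partial}_{k}m_{1}$, and the second $z$-derivatives of $m_{1}$ lie in appropriate weighted $L^{2}$ spaces locally uniformly in $u$ belonging to a bounded set of $H^{2,1}$, together with the symmetric statements for the dual system, and that each of these dependences is locally Lipschitz in the potential. This requires revisiting the Beals--Coifman solvability theory with the spectral parameter tracked carefully, and this is the technical core of the argument.
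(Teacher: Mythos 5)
Your overall strategy coincides with the paper's: by Theorem \ref{thm:Lip.pre} the $H^{1,1}\to H^{1,1}$ theory already gives the first $k$-weight and the $k$-derivatives of $r$ (and symmetrically for $\mathcal{I}$), so the whole content is to control $u\mapsto|k|^{2}r(k)$ and $r\mapsto\Delta u$ in $L^{2}$ of the dual variable, and the first move is indeed integration by parts against the phase together with the $\overline{\partial}$-system to re-express derivatives of the CGO solutions (this is exactly how Lemmas \ref{lemma:restrict.R} and \ref{lemma:restrict.I} begin). The gap is in the step you summarize as ``the integrand is controlled by $\|u\|_{H^{2,1}}$ together with the CGO bounds.'' Pointwise or $L^{\infty}_{k}$ bounds on the CGO solutions give, at best, that each resulting $z$-integral is a bounded function of $k$; they do not give membership in $L^{2}(dA(k))$, which is what $H^{1,2}$ demands. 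The square-integrability in $k$ is the actual substance of the proof, and it is obtained in the paper by a specific mechanism you do not supply: expand $\mu_{1}-1$ and $\mu_{2}$ in the finite Neumann series of Lemma \ref{lemma:1}, whose remainder decays like $(1+|k|)^{-N}$ in $L^{\infty}_{k}(L^{p}_{z})$; then apply Brown's multilinear Brascamp--Lieb estimate (Proposition \ref{prop:brown}, via Lemma \ref{lemma:2}) to each term $\langle e_{k}w,T^{2j}1\rangle$ to conclude it lies in $L^{2}_{k}$ with a multilinear bound. Without some substitute for these multilinear estimates, ``bounds locally uniform in $u$'' on $\partial_{k}m_{1}$ and $\partial_{z}^{2}m_{1}$ will not close the argument.

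A second concrete obstruction to your plan as written: after two integrations by parts you face terms such as $\int e_{k}(\partial^{2}u)(\overline{\mu_{1}}-1)\,dA(z)$, and for $u\in H^{2,1}$ the function $\partial^{2}u$ is only in $L^{2}$, not in $L^{p'}$ for any $p'<2$, so it cannot be paired directly against $\mu_{1}-1\in L^{p}_{z}$, $p>2$, nor fed into the multilinear machinery. The paper must split $\partial^{2}u=\chi\,\partial^{2}u+(1-\chi)\partial^{2}u$ and, on the outer region, invoke the one-step large-$z$ expansion of Lemma \ref{lemma:3} to extract a factor $z^{-1}$, turning $(1-\chi)(\partial^{2}u)z^{-1}$ into an $L^{2}$ function whose Fourier transform can be paired with an $L^{\infty}_{k}$ factor. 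An analogous near/far decomposition in $k$ is needed for $\mathcal{I}$. Your proposal correctly identifies where the difficulty sits (propagating regularity and decay through the CGO solutions uniformly and Lipschitz-continuously in the potential), but it defers precisely that point, so as it stands it is an accurate roadmap rather than a proof.
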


Theorem \ref{thm:Lip} immediately implies that the solution formula
(\ref{eq:mNV.sol}) defines a continuous map%
\begin{align*}
H^{2,1}(\mathbb{R}^{2}) &  \rightarrow C\left(  \left[  0,T\right]
;H^{2,1}(\mathbb{R}^{2}\right)  ),\\
t &  \mapsto u(t).
\end{align*}
for any $T>0$. We say that $u$ is a weak solution of the mNV\ equation (see
(\ref{eq:mNV})) on $\left[  0,T\right]  $ if
\begin{equation}
-\left(  \varphi_{t}+\partial^{3}\varphi+\overline{\partial}^{3}%
\varphi,u\right)  +\left(  \varphi,NL(u)\right)  =0\label{eq:mNV.weak}%
\end{equation}
for all $\varphi\in\mathcal{C}_{0}^{\infty}\left(  \mathbb{R}^{2}\times\left[
0,T\right]  \right)  $, where $\left(  ~\cdot~,~\cdot~\right)  $ denotes the
inner product on $L^{2}(\mathbb{R}^{2}\times\left[  0,T\right]  )$. We will
show that (\ref{eq:mNV.sol}) defines a weak solution in this sense and that,
also, the flow (\ref{eq:mNV.sol}) leaves the domain of $\mathcal{M}$
invariant. We will prove:

\begin{theorem}
\label{thm:mNV}For $u_{0}\in\mathcal{S}(\mathbb{R}^{2})$, the solution formula
(\ref{eq:mNV.sol}) gives a classical solution of mNV. Moreover, if $u_{0}\in
H^{2,1}(\mathbb{R}^{2})\cap L^{1}(\mathbb{R}^{2})$, $\partial u_{0}%
=\overline{\partial u_{0}}$, and $\int u_{0}(z)~dA(z)=0$, then $u(t)$ is a
weak solution of mNV and the relations $\left(  \partial u\right)  \left(
~\cdot~,t\right)  =\overline{(\partial u)(~\cdot~,t)}$ and $\int
u(z,t)~dA(z)=0$ hold for all $t$.
\end{theorem}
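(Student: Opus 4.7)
The plan is to treat the two assertions in succession, using the Schwartz-data result of Appendix \ref{app:mNV} as a black box for the first claim. For $u_{0}\in\mathcal{S}(\mathbb{R}^{2})$ satisfying the constraint and zero-mean conditions, the formula (\ref{eq:mNV.sol}) already produces a classical solution $u(t)$. I verify directly that the two side conditions persist: integrating mNV over $\mathbb{R}^{2}$ gives $\tfrac{d}{dt}\int u\,dA=0$, since $\partial^{3}u+\overline{\partial}^{3}u$ and the nonlinearity $NL(u)$ are in divergence form; and the constraint $\partial u=\overline{\partial u}$ is preserved because it is the fixed-point condition of an involution $\tau$ on $H^{2,1}(\mathbb{R}^{2})$ under which the scattering maps satisfy $\mathcal{R}\circ\tau=\sigma\circ\mathcal{R}$ and $\mathcal{I}\circ\sigma=\tau\circ\mathcal{I}$ for a corresponding involution $\sigma$ on $H^{1,2}(\mathbb{R}^{2})$ commuting with multiplication by $e^{(\overline{k}^{3}-k^{3})t}$, so that (\ref{eq:mNV.sol}) gives $\tau u(t)=u(t)$.

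Next I pass from $\mathcal{S}$ to $u_{0}\in H^{2,1}\cap L^{1}$ by approximation. I produce a sequence $\{u_{0}^{(n)}\}\subset\mathcal{S}(\mathbb{R}^{2})$ converging to $u_{0}$ in $H^{2,1}\cap L^{1}$ with each $u_{0}^{(n)}$ satisfying $\partial u_{0}^{(n)}=\overline{\partial u_{0}^{(n)}}$ and $\int u_{0}^{(n)}\,dA=0$. Such a sequence exists because the symmetry subspace is closed and invariant under mollification and smooth truncation, and an exact zero-mean correction can be made by subtracting a small compactly supported element of the same subspace. Theorem \ref{thm:Lip}, together with the elementary fact that multiplication by $e^{(\overline{k}^{3}-k^{3})t}$ is a strongly continuous one-parameter family of isometries on $H^{1,2}(\mathbb{R}^{2})$, then yields $u^{(n)}\to u$ in $C([0,T];H^{2,1}(\mathbb{R}^{2}))$ for every $T>0$.

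It remains to pass the identities to the limit. Each $u^{(n)}$ is classical, and so trivially satisfies (\ref{eq:mNV.weak}); since $NL$ is built from pointwise multiplication together with the singular integral operators $\overline{\partial}^{-1}\partial$ and $\partial^{-1}\overline{\partial}$, it maps $H^{2,1}(\mathbb{R}^{2})$ continuously into a space that pairs continuously with any $\varphi\in\mathcal{C}_{0}^{\infty}$, so the weak form passes to the limit. The constraint $\partial u(\cdot,t)=\overline{\partial u(\cdot,t)}$ survives because it is a closed condition in the $H^{2,1}$-topology. The step I expect to be most delicate is preservation of zero mean: Theorem \ref{thm:Lip} supplies only $H^{2,1}$-convergence, whereas the pointwise identity $\int u(\cdot,t)\,dA=0$ requires $u(\cdot,t)\in L^{1}(\mathbb{R}^{2})$. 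I would remedy this by supplementing Theorem \ref{thm:Lip} with an $L^{1}$-propagation estimate, extracted from the reconstruction formula for $\mathcal{I}$ and the vanishing of the scattering data at $k=0$ (forced by the zero-mean hypothesis on $u_{0}$), which would propagate $L^{1}$ control along the flow and justify $\int u(z,t)\,dA=\lim_{n}\int u^{(n)}(z,t)\,dA=0$.
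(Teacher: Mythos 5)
The outline of your first and last steps matches the paper: the Schwartz-class result of Appendix \ref{app:mNV} is indeed the starting point, the persistence of $\int u\,dA=0$ does follow from the divergence structure (the paper phrases this as $c_{0}^{\prime}(t)=0$ for $c_{0}(t)=\int u(z,t)\,dA(z)$ in Lemma \ref{lemma:mNV.asy}), and the passage to $H^{2,1}\cap L^{1}$ is by density and Theorem \ref{thm:Lip} essentially as you describe. The genuine gap is your treatment of the constraint $\partial u=\overline{\partial u}$. You assert the existence of an involution $\tau$ on $H^{2,1}$ whose fixed-point set is the constraint manifold and which is intertwined by $\mathcal{R}$ and $\mathcal{I}$ with an involution $\sigma$ on the scattering side commuting with the linearized flow; you neither construct $\tau$ and $\sigma$ nor verify the intertwining relations. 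This is precisely the hard part of the theorem. The paper does not use such a symmetry: it sets $\varphi=\overline{\partial}^{-1}u$, observes that the constraint is equivalent to $\varphi$ being real-valued, derives a \emph{linear} third-order dispersive equation (\ref{eq:w.ev}) for $w=\varphi-\overline{\varphi}$ with variable coefficients built from $\varphi$, and proves $w\equiv0$ by an $L^{2}$ energy estimate (Lemma \ref{lemma:weight}) obtained from Chihara's pseudodifferential multiplier method and the sharp G\aa rding inequality. Note also that your symmetry argument, were it valid, would make no use of the hypothesis $\int u_{0}\,dA=0$; in the paper that hypothesis is exactly what forces $\varphi(\cdot,t)\in L^{2}$ (Lemma \ref{lemma:mNV.asy} gives $\varphi=c_{0}/z+\mathcal{O}(|z|^{-2})$, and $1/z$ is not square integrable at infinity), without which the energy estimate for $w$ has no space to live in. That your route discards this hypothesis at the decisive step is a strong signal that something essential is missing.

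A second, smaller soft spot, which you flag yourself, is the passage of $\int u(\cdot,t)\,dA=0$ to the limit for $u_{0}\in H^{2,1}\cap L^{1}$: since $H^{2,1}(\mathbb{R}^{2})$ does not embed in $L^{1}(\mathbb{R}^{2})$, convergence in $C([0,T];H^{2,1})$ alone does not control the integral, and your proposed $L^{1}$-propagation estimate is left as a conjecture rather than proved. The paper is itself terse on this point, so this is a lesser offense; but as written, neither of the two nontrivial persistence claims is actually established in your proposal, and the first one requires an entirely different argument from the one you sketch.
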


\begin{remark}
Although it is likely well within the reach of current technology (see e.g.
\cite{KPV:1991} for relevant dispersive estimates), there appear to be no
uniqueness or local well-posedness result for mNV in the literature. Given
such a result, one could conclude from the proof of Theorem \ref{thm:mNV} that
the Cauchy problem for mNV is globally well--posed in $H^{2,1}(\mathbb{R}%
^{2})$.
\end{remark}

Now we can solve the NV\ equation using the solution map for mNV and the Miura
map (\ref{eq:Miura2}). We say that $q$ is a weak solution of the NV equation
on $\left[  0,T\right]  $ if
\begin{equation}
\left(  \varphi_{t}+\partial^{3}\varphi+\overline{\partial}^{3}\varphi
,q\right)  +\frac{3}{4}\left(  \partial\varphi,q\overline{\partial}%
^{-1}\partial q\right)  +\frac{3}{4}\left(  \overline{\partial}\varphi
,q\partial^{-1}\overline{\partial}q\right)  =0 \label{eq:NV.weak}%
\end{equation}
for all $\varphi\in\mathcal{C}_{0}^{\infty}\left(  \mathbb{R}^{2}\times\left[
0,T\right]  \right)  $. Using Theorem \ref{thm:mNV}, we will prove:

\begin{theorem}
\label{thm:NV}Suppose that $q_{0}=2\partial u_{0}+\left\vert u_{0}\right\vert
^{2}$ where $u_{0}\in H^{2,1}(\mathbb{R}^{2})\cap L^{1}(\mathbb{R}^{2})$,
$\partial u_{0}=\overline{\partial u_{0}}$ and $\int u_{0}(z)~dA(z)=0$. Then
\begin{equation}
q(t)=\mathcal{M}\left(  \mathcal{I}\left(  e^{2it\left(  \left(
\diamond\right)  ^{2}+\left(  \overline{\diamond}\right)  ^{2}\right)
}\left(  \mathcal{R}u_{0}\right)  (\diamond)\right)  \right)
\label{eq:NV.sol.bis}%
\end{equation}
is a weak solution the NV\ equation with initial data $q_{0}$. If $u_{0}%
\in\mathcal{S}(\mathbb{R}^{2})$, then $q(t)$ is a classical solution of the NV\ equation.
\end{theorem}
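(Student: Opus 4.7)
The plan is to reduce the NV Cauchy problem to the mNV Cauchy problem already solved in Theorem \ref{thm:mNV} and to transfer the result through Bogdanov's Miura-type map. First, I apply Theorem \ref{thm:mNV} to produce a solution $u(t)$ of mNV via (\ref{eq:mNV.sol}) -- classical when $u_0\in\mathcal{S}(\mathbb{R}^2)$ and weak otherwise -- which preserves the constraints $\partial u(\cdot,t)=\overline{\partial u(\cdot,t)}$ and $\int u(z,t)\,dA(z)=0$ for all $t$. Then I set $q(t)=\mathcal{M}(u(t))$; by hypothesis $q(0)=q_0$, so everything reduces to showing that $q(t)$ satisfies NV in the appropriate sense.

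For the classical case $u_0\in\mathcal{S}(\mathbb{R}^2)$, the key is Bogdanov's Miura intertwining identity: if $u$ is a smooth, rapidly decaying solution of mNV satisfying $\partial u=\overline{\partial u}$ and the mean-zero condition, then $q=2\partial u+|u|^2$ satisfies (\ref{eq:NV}) pointwise. This is a direct computation -- differentiate $q$ in time, substitute the mNV equation for $u_t$, and invoke the reality constraint to collapse the dispersive and cubic terms into the NV right-hand side. The Schwartz decay of $u$ ensures that the nonlocal operators $\overline{\partial}^{-1}\partial$ and $\partial^{-1}\overline{\partial}$ act meaningfully on $q$ and its derivatives, making the calculation rigorous. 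This intertwining, at least at the formal level, is the content of Bogdanov's observation cited earlier in the paper; its rigorous incarnation for Schwartz $u$ is what yields the classical NV solution.

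For the weak case I proceed by approximation. Using the structural fact that $\partial u_0=\overline{\partial u_0}$ is equivalent to $u_0=2\overline{\partial}\phi_0$ for a real potential $\phi_0$, I choose $u_0^{(n)}\in\mathcal{S}(\mathbb{R}^2)$ satisfying the same constraints with $u_0^{(n)}\to u_0$ in $H^{2,1}(\mathbb{R}^2)\cap L^1(\mathbb{R}^2)$. Theorem \ref{thm:Lip} then yields $u^{(n)}(t)\to u(t)$ in $C([0,T];H^{2,1}(\mathbb{R}^2))$, and continuity of $\mathcal{M}$ on a suitable subspace gives $q^{(n)}(t)\to q(t)$. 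Each $q^{(n)}$ is a classical, hence weak, solution of NV by the previous step; passing to the limit inside (\ref{eq:NV.weak}) with a fixed test function $\varphi$ yields the weak NV equation for $q$.

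The main obstacle is controlling the nonlocal quadratic pairings $(\partial\varphi,\,q\overline{\partial}^{-1}\partial q)$ and $(\overline{\partial}\varphi,\,q\partial^{-1}\overline{\partial}q)$ in this limit. The operators $\overline{\partial}^{-1}\partial$ and $\partial^{-1}\overline{\partial}$ are Beurling-type singular integrals, so these pairings are continuous only on spaces that provide enough decay and integrability of $q$ and $\nabla q$. This is exactly where the hypotheses $u_0\in L^1(\mathbb{R}^2)$ and $\int u_0=0$ enter: together with the propagation of these structural properties by the mNV flow (from Theorem \ref{thm:mNV}), they guarantee that $q_0$ and $q(t)$ have the cancellation and decay needed for the NV nonlinearity to extend continuously and allow the limit in (\ref{eq:NV.weak}) to be taken rigorously. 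Establishing the requisite Lipschitz bounds on $\mathcal{M}$ and the nonlocal bilinear forms in these spaces will be the technically heaviest step.
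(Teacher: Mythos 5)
Your proposal is correct and follows essentially the same route as the paper: invoke Theorem \ref{thm:mNV} and Bogdanov's Miura intertwining for Schwartz data to get classical NV solutions, then approximate $u_{0}$ by Schwartz data, use the local Lipschitz continuity of the scattering maps to get convergence of $u_{n}(t)$ in $C([0,T];H^{2,1})$, hence of $q_{n}=\mathcal{M}(u_{n})$ in $C([0,T];L^{2})$, and pass to the limit in the weak formulation using the $L^{2}$-boundedness of the Beurling transform to control the nonlocal quadratic pairings. The only difference is one of emphasis: the step you flag as ``technically heaviest'' is dispatched in the paper in two lines, since $L^{2}$-convergence of $q_{n}$ plus $L^{p}$-boundedness of $\partial\overline{\partial}^{-1}$ immediately gives $C([0,T];L^{1})$ convergence of the products $q_{n}\overline{\partial}^{-1}\partial q_{n}$.
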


The class of initial data covered by Theorem \ref{thm:NV} includes the
conductivity-type potentials considered by Lassas, Mueller, Siltanen, and
Stahel. The connection between their work and ours is given in the following theorem.

\begin{theorem}
\label{thm:PP.and.LMSS}Suppose that $u_{0}\in\mathcal{C}_{0}^{\infty
}(\mathbb{R}^{2})$ with $\int u_{0}(z)~dA(z)=0$ and $\overline{\partial u_{0}%
}=\partial u_{0}$, and let $q_{0}=2\partial u_{0}+\left\vert u_{0}\right\vert
^{2}$. Then, for any $t$,%
\[
\mathcal{Q}\left(  e^{it\left(  \left(  \diamond\right)  ^{3}+\left(
\overline{\diamond}\right)  ^{3}\right)  }(\mathcal{T}q_{0})(\diamond)\right)
=\mathcal{MI}\left(  e^{t\left(  \left(  \overline{\diamond}\right)
^{3}-(\diamond)^{3}\right)  }\left(  \mathcal{R}u_{0}\right)  (\diamond
)\right)
\]
and their common value is a classical solution to the Novikov-Veselov equation.
\end{theorem}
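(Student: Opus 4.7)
The plan is to identify the right-hand side with the left-hand side by showing that the classical NV solution produced by Theorem \ref{thm:NV} remains of conductivity type for all $t$, so that Nachman's reconstruction $\mathcal{Q}\mathcal{T} = \mathrm{id}$ applies at every time, and by matching the time evolution of the Schr\"odinger scattering transform with the expected phase $e^{it(k^{3}+\overline{k}^{3})}$.

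Since $u_{0} \in \mathcal{C}_{0}^{\infty}(\mathbb{R}^{2}) \subset \mathcal{S}(\mathbb{R}^{2})$, Theorem \ref{thm:NV} delivers the right-hand side
\[
q(t) = \mathcal{M}\mathcal{I}\!\left( e^{t(\overline{\diamond}^{3} - \diamond^{3})} (\mathcal{R}u_{0})(\diamond) \right)
\]
as a classical solution of NV with $q(0) = q_{0}$, factored as $q(\cdot, t) = \mathcal{M}(u(\cdot, t))$ for the classical mNV solution $u(\cdot, t)$ supplied by Theorem \ref{thm:mNV}. That theorem also guarantees that the constraints $\partial u(\cdot, t) = \overline{\partial u(\cdot, t)}$ and $\int u(z, t)\,dA(z) = 0$ persist in time, so $u(\cdot, t)$ remains in the class from which $\mathcal{M}$ produces conductivity-type potentials (as identified earlier in the paper); hence $q(\cdot, t)$ is of conductivity type for every $t$. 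By the results of Nachman and LMSS cited in the introduction, $\mathcal{T}q(\cdot, t)$ is therefore well-defined and $\mathcal{Q}\mathcal{T}q(\cdot, t) = q(\cdot, t)$. It remains to prove the intertwining
\[
(\mathcal{T}q(\cdot, t))(k) = e^{it(k^{3} + \overline{k}^{3})}(\mathcal{T}q_{0})(k).
\]

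I would obtain this by differentiating the representation $\mathbf{t}(k, t) = \int e^{i(\overline{k}\overline{z} + kz)} q(z, t) m(z, k, t)\,dA(z)$ in $t$, substituting $q_{t}$ via the NV equation (\ref{eq:NV}), and using the Schr\"odinger equation $(-\Delta + q(\cdot, t)) m(\cdot, k, t) = 0$ together with the auxiliary Lax operator for NV to rewrite $m_{t}$. Repeated integration by parts collapses the resulting expression to $i(k^{3} + \overline{k}^{3}) \mathbf{t}(k, t)$, which yields the phase formula; combined with $\mathcal{Q}\mathcal{T}q(\cdot, t) = q(\cdot, t)$ this delivers the stated identity, and the classical-solution conclusion is already contained in Theorem \ref{thm:NV}. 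The main obstacle is making the Lax-pair computation fully rigorous: one must justify differentiation under the integral sign, control the nonlocal operators $\overline{\partial}^{-1}\partial$ and $\partial^{-1}\overline{\partial}$ appearing in the NV nonlinearity, and handle boundary behavior when integrating by parts against the CGO solution $m(\cdot, k, t)$. This is tractable thanks to the regularity and decay of $q(\cdot, t)$ inherited from the mNV flow via the Miura map, and to the sharp estimates on $m(z, k, t) - 1$ provided by Nachman's theory for conductivity-type potentials.
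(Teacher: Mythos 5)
Your overall logic --- show the right-hand side is a classical NV solution, show $\mathcal{Q}\mathcal{T}$ acts as the identity on $q(\cdot,t)$, and establish the evolution law $(\mathcal{T}q(\cdot,t))(k)=e^{it(k^{3}+\overline{k}^{3})}(\mathcal{T}q_{0})(k)$ --- would yield the theorem if each step were proved, but the last step is a genuine gap, and it is exactly the step the paper is structured to avoid. Differentiating $\mathbf{t}(k,t)=\int e^{i(\overline{k}\overline{z}+kz)}q(z,t)m(z,k,t)\,dA(z)$ in $t$ requires an evolution law for the \emph{normalized} CGO solution $m(\cdot,k,t)$; the NV Lax representation (a Manakov triple, not a true Lax pair) only tells you that a zero-energy Schr\"{o}dinger solution propagated by the auxiliary third-order operator remains a solution, not that the propagated function retains the normalization $e^{-ikz}\psi\to 1$. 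One must therefore re-identify the propagated solution with the CGO solution at time $t$ up to a $k$-dependent factor (which presupposes uniqueness of CGO solutions, i.e.\ the conductivity-type property, at every $t$) and extract that factor from the large-$z$ asymptotics of a third-order operator applied to $m$. This is essentially the formal argument of Boiti--Leon--Manna--Pempinelli whose rigorization is precisely the open problem of Lassas--Mueller--Siltanen--Stahel that this paper solves by other means; asserting that ``repeated integration by parts collapses the expression'' does not close it. A secondary gap: you quote $\mathcal{Q}\mathcal{T}q(\cdot,t)=q(\cdot,t)$ from LMSS, but their result as stated in the paper applies to radial, compactly supported conductivity-type potentials, and $q(\cdot,t)$ for $t>0$ is neither; you would need Nachman's more general reconstruction theorem together with a verification of its decay and regularity hypotheses along the flow.

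The paper's proof never touches $\partial_{t}\mathbf{t}$ and never invokes $\mathcal{Q}\mathcal{T}=\mathrm{id}$ at positive times. Instead it proves two \emph{static} identities valid at each fixed time: Lemma \ref{lemma:R.to.T} shows that the CGO solution for $q=2\partial u+\left\vert u\right\vert^{2}$ is assembled from the solutions $(\mu_{1},\mu_{2})$ of the Davey--Stewartson $\overline{\partial}$-system via $\psi(z,k)=e^{ikz}\mu_{1}(z,ik)+e^{-i\overline{k}\overline{z}}\overline{\mu_{2}(z,ik)}$, whence $(\mathcal{T}q)(k)=-2\pi i\overline{k}\,\overline{(\mathcal{R}u)(ik)}$ by integration by parts; Lemma \ref{lemma:MI.to.Q} establishes the companion identity $\mathcal{Q}\mathbf{t}=\mathcal{M}\mathcal{I}r$ for data of this form. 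The substitution $k\mapsto ik$ then carries the linear phase $e^{t(\overline{\diamond}^{3}-\diamond^{3})}$ of the mNV flow into the NV phase $e^{it(k^{3}+\overline{k}^{3})}$, so the two linear evolutions of scattering data are matched purely algebraically and all the analytic content is inherited from the already-established mNV solution. If you want to complete your argument, the effort should go into proving those two lemmas rather than into the Lax-pair evolution of $m$.
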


It should be noted that the solution formula (\ref{eq:NV.sol.bis}) provides a
solution which exists \emph{globally} in time. On the other hand, Taimanov and
Tsarev (see \cite{TT:2007,TT:2008a,TT:2008b,TT:2010}) have used Moutard
transformations to construct explicit, nonsingular Cauchy data $q_{0}$ with
rapid decay at infinity and having the following properties: (i) the
Schr\"{o}dinger operator $-\Delta+q_{0}$ has nonzero eigenvalues at zero
energy and (ii) the solution of (\ref{eq:NV}) with Cauchy data $q_{0}$ blows
up in finite time.

To close this introduction, we comment on the seemingly restrictive hypothesis
in Theorems \ref{thm:NV} and \ref{thm:PP.and.LMSS}. In both theorems, we
assume that $\int u_{0}=0$. To understand what this assumption means, we
recall that if $\phi_{0}=\overline{\partial}^{-1}u_{0}$, then the unique,
positive, normalized zero-energy solution of the Schr\"{o}dinger equation
(\ref{eq:SE1}) is given by $\psi_{0}=\exp\left(  \phi_{0}\right)  $. For
$u_{0}\in\mathcal{S}(\mathbb{R}^{2})$ say, we have from the integral
expression for $\overline{\partial}^{-1}$ that
\[
\phi_{0}(z)=-\frac{1}{\pi}\frac{\int u_{0}(\zeta)~d\zeta}{z}+\mathcal{O}%
\left(  \left\vert z\right\vert ^{-2}\right)
\]
so that, to leading order%
\[
\psi_{0}-1=-\frac{1}{\pi}\frac{\int u_{0}(\zeta)~d\zeta}{z}+\mathcal{O}\left(
\left\vert z\right\vert ^{-2}\right)
\]
Recalling that $\gamma^{1/2}(z)=\psi_{0}(z)$ we see that the vanishing of
$\int u_{0}(z)~dA(z)$ implies that $\gamma(z)-1=\mathcal{O}\left(  \left\vert
z\right\vert ^{-2}\right)  $ as $\left\vert z\right\vert \rightarrow\infty$.
In particular, for conductivities with $\gamma=1$ outside a compact set, $\int
u_{0}(z)~dA(z)=0$.

Indeed, suppose that $q=\gamma^{-1/2}\Delta\left(  \gamma^{1/2}\right)  $ in
distribution sense, where $\gamma\in L^{\infty}(\mathbb{R}^{2})$,
$\gamma(z)\geq c>0$, and suppose further that $\Delta\left(  \nabla
\gamma\right)  $ and $\gamma-1$ belong to $L^{2}(\mathbb{R}^{2})$. It follows
that $\varphi=\log\gamma\in H^{3,1}(\mathbb{R}^{2})$\ and the function%
\[
u=2\overline{\partial}\varphi
\]
belongs to $H^{2,1}$. We then compute that $q=2\partial u+\left\vert
u\right\vert ^{2}$. If we have stronger decay of $\gamma\left(  z\right)  $ as
$\left\vert z\right\vert \rightarrow\infty$, this will imply additional decay
of $\varphi(z)$ that can be used to check $\int u(z)~dA(z)=0$ by Green's
formula $\int_{\Omega}\overline{\partial}\varphi~dA(z)=\frac{1}{2}%
\int_{\partial\Omega}\varphi\left(  \nu_{x_{1}}+i\nu_{x_{2}}\right)  ~d\sigma$.

The structure of this paper is as follows. In \S \ref{sec:prelim} we review
some important linear and multilinear estimates which will be used to study
the scattering maps $\mathcal{R}$ and $\mathcal{I}$. In \S \ref{sec:IR.def} we
recall how the scattering maps $\mathcal{R}$ and $\mathcal{I}$ for the
Davey-Stewartson system are defined, while in \S \ref{sec:IR} we prove that
$\mathcal{R}:H^{2,1}(\mathbb{R}^{2})\rightarrow H^{1,2}(\mathbb{R}^{2})$ and
$\mathcal{I}:H^{1,2}(\mathbb{R}^{2})\rightarrow H^{2,1}(\mathbb{R}^{2})$ are
locally Lipschitz continuous. In \S \ref{sec:mNV} we solve the mNV\ equation
using the inverse scattering method and prove that, for initial data $u_{0}\in
H^{2,1}(\mathbb{R}^{2})$ with $\partial u_{0}=\overline{\partial u_{0}}$ and
$\int_{\mathbb{R}^{2}}u_{0}(z)~dA(z)=0$, the condition $\partial
u=\overline{\partial u}$ holds for all $t>0$. In \S \ref{sec:conductivity} we
show that our class of potentials extends the class of conductivity type
potentials considered by Lassas, Mueller, Siltanen and Stahel \cite{LMSS:2011}%
, and that our solution coincides with theirs where the two constructions
overlap. Appendix \ref{app:mNV} sketches the solution of the mNV equation by
scattering theory for initial data in the Schwarz class.

\emph{Acknowledgements}. The author gratefully acknowledges the support of the
College of Arts and Sciences at the University of Kentucky for a CRAA\ travel
grant and the Isaac Newton Institute for hospitality during part of the time
this work was done. The author thanks Fritz Gesztesy and Russell Brown for
helpful conversations and correspondence.

\section{Preliminaries}

\label{sec:prelim}

\emph{Notation}. In what follows, $\left\Vert ~\cdot~\right\Vert _{p}$ denotes
the usual $L^{p}$-norm and $p^{\prime}=p/(p-1)$ denotes the conjugate
exponent. If $f$ is a function of $\left(  z,k\right)  $, $f(z,\diamond)$
(resp. $f(~\cdot~,k)$) denotes $f$ with a generic argument in the $z$ or $k$
variable. We will write $L_{z}^{p}$ or $L_{k}^{p}$ for $L^{p}$-spaces with
respect to the $z$ or $k$ variable, and $L_{z}^{p}\left(  L_{k}^{q}\right)  $
for the mixed spaces with norm%
\[
\left\Vert f\right\Vert _{L_{z}^{p}\left(  L_{k}^{q}\right)  }=\left(
\int\left\Vert f(z,~\diamond~)\right\Vert _{q}^{p}~dA(z)\right)  ^{1/p}.
\]
If $f$ is a function of $z$ and $k$, $\left\Vert f\right\Vert _{\infty}$
denotes $\left\Vert f\right\Vert _{L^{\infty}\left(  \mathbb{R}_{z}^{2}%
\times\mathbb{R}_{k}^{2}\right)  }$.

In what follows, $\left\langle ~\cdot~,~\cdot~\right\rangle $ denotes the
pairing%
\[
\left\langle f,g\right\rangle =-\frac{1}{\pi}\int\overline{f(z)}g(z)~dA(z)
\]

We will call a mapping $f$ from a Banach space $X$ to a Banach space $Y$ a
\emph{LLCM} (locally Lipschitz continuous map) if for any bounded subset $B$
of $X$, there is a constant $C=C(B)$ so that, for all $x_{1},x_{2}\in B$,
\[
\left\Vert f(x_{1})-f(x_{2})\right\Vert _{Y}\leq C(B)\left\Vert x_{1}%
-x_{2}\right\Vert _{X}.
\]
For example, if $M:X^{m}\rightarrow Y$ is a continuous multilinear map, then
\[
f\mapsto M(f,f,\ldots,f)
\]
is a LLCM from $X$ to $Y$.

\emph{Cauchy Transforms}. The integral operators%
\begin{align*}
P\psi &  =\frac{1}{\pi}\int\frac{1}{\zeta-z}~f(\zeta)~dm(\zeta),\\
\overline{P}\psi &  =\frac{1}{\pi}\int\frac{1}{\overline{\zeta}-\overline{z}%
}~f(\zeta)~dm(\zeta)
\end{align*}
are formal inverses respectively of $\overline{\partial}$ and $\partial$. We
denote by $P_{k}$ and $\overline{P}_{k}$ the corresponding formal inverses of
$\overline{\partial}_{k}$ and $\partial_{k}$. The following estimates are
standard (see, for example, Astala-Iwaniec-Martin \cite{AIM:2009}, \S 4.3, or
Vekua \cite{Vekua:1962}).

\begin{lemma}
\label{lemma:P}(i) For any $p\in(2,\infty)$ and $f\in L^{p}$, $\left\Vert
Pf\right\Vert _{p}\leq C_{p}\left\Vert f\right\Vert _{2p/(p+2)}.$\newline(ii)
For any $p,q$ with $1<q<2<p<\infty$ and any $f\in L^{p}\cap L^{q}$,
$\left\Vert Pf\right\Vert _{\infty}\leq C_{p,q}\left\Vert f\right\Vert
_{L^{p}\cap L^{q}}$ and $Pf$ is H\"{o}lder continuous of order $(p-2)/p$ with
\[
\left\vert \left(  Pf\right)  (z)-\left(  Pf\right)  (w)\right\vert \leq
C_{p}\left\vert z-w\right\vert ^{(p-2)/p}\left\Vert f\right\Vert _{p}.
\]
\newline(iii) For $2<p<q$ and $u\in L^{s}$ for $q^{-1}+1/2=p^{-1}+s^{-1}$ $,$%
\[
\left\Vert P(u\psi)\right\Vert _{q}\leq C_{p,q}\left\Vert u\right\Vert
_{s}\left\Vert \psi\right\Vert _{p}.
\]

\end{lemma}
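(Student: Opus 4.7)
The plan is to treat the three parts as instances of standard facts about the two-dimensional Cauchy transform, which is convolution with the kernel $1/(\pi z)$ having pointwise bound $|z|^{-1}$ on $\mathbb{R}^2$. I would invoke Hardy--Littlewood--Sobolev where available and verify exponents, then handle the $L^\infty$ and Hölder bounds by direct splitting and scaling.

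For part (i), the operator $P$ is dominated in absolute value by the Riesz potential $I_1$ on $\mathbb{R}^2$. The Hardy--Littlewood--Sobolev inequality gives $\|I_1 g\|_p \leq C \|g\|_r$ whenever $1/r = 1/p + 1/2$ with $1 < r < 2$, which corresponds to $r = 2p/(p+2)$ and $p \in (2,\infty)$. Applying this with $g = |f|$ yields the stated bound.

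For part (ii), the $L^\infty$ estimate follows by splitting
\[
(Pf)(z) = \frac{1}{\pi}\int_{|\zeta-z|\leq 1} \frac{f(\zeta)}{\zeta-z}\,dA(\zeta) + \frac{1}{\pi}\int_{|\zeta-z|>1} \frac{f(\zeta)}{\zeta-z}\,dA(\zeta).
\]
On the first integral, Hölder with exponents $p$ and $p'$ works because $|\zeta-z|^{-1} \in L^{p'}$ on the unit ball (requiring $p' < 2$, i.e., $p > 2$); on the second, Hölder with exponents $q$ and $q'$ works because $|\zeta-z|^{-1} \in L^{q'}$ on the complement of the unit ball (requiring $q' > 2$, i.e., $q < 2$). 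This gives $\|Pf\|_\infty \leq C_{p,q}(\|f\|_p + \|f\|_q)$. For the Hölder estimate, write
\[
(Pf)(z) - (Pf)(w) = \frac{z-w}{\pi}\int \frac{f(\zeta)}{(\zeta-z)(\zeta-w)}\,dA(\zeta),
\]
set $h = z-w$ and change variables $\zeta = z + h\eta$, under which the $L^{p'}$ norm of the kernel $K(\zeta) = h/((\zeta-z)(\zeta-w))$ becomes
\[
\|K\|_{p'}^{p'} = |h|^{2-p'}\int \frac{dA(\eta)}{|\eta|^{p'}\,|\eta+1|^{p'}},
\]
and the $\eta$-integral is finite precisely when $1 < p' < 2$, i.e., $2 < p < \infty$. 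The exponent $(2-p')/p' = (p-2)/p$ gives the stated Hölder modulus after applying Hölder in $\zeta$ to the representation of $(Pf)(z) - (Pf)(w)$.

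For part (iii), Hölder's inequality gives $\|u\psi\|_r \leq \|u\|_s \|\psi\|_p$ with $1/r = 1/s + 1/p$. The hypothesis on exponents rearranges exactly to $1/r = 1/q + 1/2$, i.e., $r = 2q/(q+2)$ with $q > 2$, so part (i) applies with $p$ replaced by $q$ to yield $\|P(u\psi)\|_q \leq C_q \|u\psi\|_r \leq C_{p,q} \|u\|_s \|\psi\|_p$. There is no serious obstacle; the only point requiring care is the scaling calculation in the Hölder estimate of part (ii), and for cleanliness I would just cite Astala--Iwaniec--Martin or Vekua for the standard bounds rather than reproduce them.
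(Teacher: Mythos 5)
Your argument is correct in all three parts: the Hardy--Littlewood--Sobolev exponent bookkeeping in (i), the near/far splitting and the scaling computation for the H\"{o}lder modulus in (ii), and the reduction of (iii) to (i) via H\"{o}lder's inequality all check out. The paper itself offers no proof of this lemma, simply citing Astala--Iwaniec--Martin \S 4.3 and Vekua for these standard estimates, and your write-up is precisely the standard argument found there, so the two treatments agree.
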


\begin{remark}
If $p>2$ and $u\in L^{s}$ for $s\in(1,\infty)$, then estimate (iii) holds true
for any $q>2$.
\end{remark}

\emph{Beurling Transform}. The operator
\begin{equation}
\left(  \mathcal{S}f\right)  (z)=-\frac{1}{\pi}\int\frac{1}{\left(
z-w\right)  ^{2}}f(w)~dw\label{eq:AB}%
\end{equation}
defined as a Calderon-Zygmund type singular integral, has the property that
for $f\in C_{0}^{\infty}(\mathbb{R}^{2})$ we have $\mathcal{S}\left(
\overline{\partial}f\right)  =\partial f$. The operator $\mathcal{S}$ is a
bounded operator on $L^{p}$ for $p\in\left(  1,\infty\right)  $ (see for
example \cite{AIM:2009}, \S 4.5.2). This fact allows us to obtain $L^{p}%
$-estimates on $\partial$-derivatives of functions of interest from $L^{p}%
$-estimates on $\overline{\partial}$-derivatives.

\emph{Brascamp-Lieb Type Estimates}. \ A fundamental role is played by the
following multilinear estimate due to Russell Brown \cite{Brown:2001}, who
initiated their use in the analysis of the DS\ II scattering maps. See
Appendix A of \cite{P}, written by Michael Christ, for a proof of these
estimates using the methods of Bennett, Carbery, Christ, and Tao
\cite{BCCT1,BCCT2}. Define%
\[
\Lambda_{n}(\rho,u_{0},u_{1},\ldots,u_{2n})=\int_{\mathbb{C}^{2n+1}}%
\frac{\left\vert \rho(\zeta)\right\vert \left\vert u_{0}(z_{0})\right\vert
\ldots\left\vert u(z_{2n})\right\vert }{\prod_{j=1}^{2k}\left\vert
z_{j-1}-z_{j}\right\vert }dA(z),
\]
where $dA(z)$ is product measure on $\mathbb{C}^{2n+1}$, and set
\begin{equation}
\zeta=\sum_{j=0}^{2n}(-1)^{j}z_{j}. \label{eq:zeta}%
\end{equation}

\begin{proposition}
\cite{Brown:2001}\label{prop:brown} The estimate%
\begin{equation}
\left\vert \Lambda_{n}(\rho,u_{0},u_{1},\ldots,u_{2n})\right\vert \leq
C_{n}\left\Vert \rho\right\Vert _{2}\prod_{j=0}^{2n}\left\Vert u_{j}%
\right\Vert _{2} \label{ineq:brown}%
\end{equation}
holds.
\end{proposition}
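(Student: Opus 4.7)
The plan is to reduce Proposition \ref{prop:brown} to the Brascamp-Lieb (BL) framework developed by Bennett, Carbery, Christ, and Tao. First, I would recognize $\Lambda_n$ as a multilinear functional in the $2n+2$ functions $\rho, u_0, \ldots, u_{2n}$, each on $\mathbb{C} \cong \mathbb{R}^2$, composed with linear maps $B_j \colon \mathbb{C}^{2n+1} \to \mathbb{C}$: namely, $B_j$ is projection onto the $j$th factor for $j = 0, \ldots, 2n$, and $B_\rho(z_0, \ldots, z_{2n}) = \sum_{j=0}^{2n}(-1)^j z_j$ according to \eqref{eq:zeta}. The weight factors $|z_{j-1}-z_j|^{-1}$ may be viewed either as Riesz weights of order $1$ on $\mathbb{R}^2$, or equivalently (via $\widehat{|z|^{-1}}(\xi) = c|\xi|^{-1}$) as Fourier multipliers coupling adjacent variables along the chain. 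By a standard density argument, one may assume $\rho, u_0, \ldots, u_{2n}$ are nonnegative Schwartz functions.

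A quick scaling check makes the bound plausible and fixes the exponents. Under the rescaling $u_j(z) \mapsto \lambda^{-1} u_j(z/\lambda)$, which preserves each $\|u_j\|_2$ and similarly for $\rho$, substituting $z_j = \lambda w_j$ in $\Lambda_n$ produces a Jacobian $\lambda^{4n+2}$ together with $\lambda^{-2n}$ from the Riesz weights, while the function rescalings contribute $\lambda^{-(2n+2)}$; the net power is
\[
\lambda^{-(2n+2)} \cdot \lambda^{4n+2} \cdot \lambda^{-2n} = \lambda^{0} = 1.
\]
Thus (\ref{ineq:brown}) is scale-invariant, which is the necessary condition for a BL-type bound. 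Applying the BCCT theorem, sufficiency then reduces to a subspace (dimension) condition: for every real subspace $V \subset \mathbb{C}^{2n+1}$ one must verify an inequality of the form
\[
\tfrac{1}{2}\dim_{\mathbb{R}}(B_\rho V) + \sum_{j=0}^{2n} \tfrac{1}{2}\dim_{\mathbb{R}}(B_j V) + (\text{weight contribution}) \geq \dim_{\mathbb{R}} V,
\]
where the weight contribution accounts for the order-$1$ Riesz factors, as in the singular/weighted BL framework of \cite{BCCT2}.

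The hard part will be verifying this subspace condition. I expect it to follow from the chain structure of the denominators: each Riesz weight couples only the pair $(z_{j-1}, z_j)$, producing a Loomis-Whitney pattern along a path, so the analysis of $V$ can be organized by tracking how its projections to successive $\mathbb{C}$-factors interact with the alternating-sum projection $B_\rho$. A careful case analysis, or an induction walking along the chain, should suffice. As a more hands-on alternative, one could try direct induction on $n$: the base case $n=0$ reduces to Cauchy-Schwarz applied to $\int|\rho(z_0) u_0(z_0)|\,dA(z_0)$, and the inductive step integrates out the terminal variables $z_{2n-1}$ and $z_{2n}$ using Hardy-Littlewood-Sobolev and H\"older, reducing to $\Lambda_{n-1}$ applied to a modified function in place of $u_{2n-2}$. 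The obstacle in that approach is that HLS pushes one out of $L^2$, forcing a delicate interpolation to recover the $L^2$ norms at the end; the heat-flow/monotonicity proof in BCCT handles this automatically, which is presumably why Christ's appendix to \cite{P} takes the BCCT route.
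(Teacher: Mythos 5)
The paper does not actually prove this proposition: it is quoted from Brown \cite{Brown:2001}, with a pointer to Appendix A of \cite{P} (written by Christ) for a proof ``using the methods of Bennett, Carbery, Christ, and Tao.'' Your proposal is therefore aimed at the same route the paper endorses, and the parts you do carry out are sound: the identification of the maps $B_j$ and $B_\rho$, and the scale-invariance computation $\lambda^{-(2n+2)}\cdot\lambda^{4n+2}\cdot\lambda^{-2n}=1$, are both correct (note the product in the paper's display should read $\prod_{j=1}^{2n}$, as you implicitly assume).

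The gap is that the step you defer is the entire content of the proof. First, the theorems of \cite{BCCT1,BCCT2} as stated govern forms $\int\prod_j f_j(B_jx)\,dx$ with no singular kernel; the Riesz factors $\left\vert z_{j-1}-z_j\right\vert^{-1}$ place $\Lambda_n$ in the multilinear fractional-integral setting, so one must first formulate the correct weighted criterion (either by absorbing each kernel as an extra ``function'' in weak $L^2$ and running a quasi-extremal/interpolation argument, or by dyadic decomposition of the kernels plus summation of the resulting genuine Brascamp--Lieb bounds) before any subspace condition can even be stated. Second, that subspace condition must then be verified for every real subspace $V\subset\mathbb{R}^{4n+2}$ for this particular configuration of $2n+2$ rank-two projections plus $2n$ difference maps; your appeal to ``a careful case analysis or an induction walking along the chain'' names the difficulty without resolving it, and it is exactly here that the chain structure and the alternating-sum map $B_\rho$ must interact correctly (the estimate is false for generic configurations with these exponents). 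Your fallback induction via Hardy--Littlewood--Sobolev fails for the reason you yourself identify: integrating out one variable at a time leaves exponents that do not return to $L^2$, which is why Brown's estimate is not a routine iteration of HLS and H\"{o}lder. As written, the proposal is a correct plan with the decisive verification missing.
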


\begin{remark}
\label{rem:multi1}For $u_{1},\ldots,u_{2n}\in\mathcal{S}(\mathbb{R}^{2})$,
define operators $W_{j}$ by $W_{j}\psi=Pe_{k}u_{j}\overline{\psi}$.
Proposition \ref{prop:brown} implies that
\[
F(k)=\left\langle e_{k}u_{0},W_{1}W_{2}\ldots W_{2n}1\right\rangle
\]
is a multilinear $L^{2}(\mathbb{R}^{2})$-valued function of $\left(
u_{0},\ldots,u_{2n}\right)  $ with
\[
\left\Vert F\right\Vert _{2}\leq C\prod_{j=0}^{2n}\left\Vert u_{j}\right\Vert
_{2}.
\]

\end{remark}

\section{Scattering Maps and an Oscillatory $\overline{\partial}$-Problem}

\label{sec:IR.def}

First, we recall from \cite{P} that the Davey-Stewartson scattering maps
$\mathcal{R}$ and $\mathcal{I}$ are both defined by $\overline{\partial}%
$-problems: see \cite{P} for full discussion. The inverse scattering method
for the Davey-Stewartson II\ equation was developed Ablowitz-Fokas
\cite{AF:1983,AF:1984} and Beals-Coifman
\cite{BC:1984,BC:1985,BC:1989,BC:1990}. Sung \cite{Sung:1994} and Brown
\cite{Brown:2001} carried out detailed analytical studies of the map.

For a complex parameter $k$ and for $z=x_{1}+ix_{2}$, let%
\[
e_{k}=e^{\overline{k}\overline{z}-kz}%
\]
Given $u\in H^{1,1}(\mathbb{R}^{2})$ and $k\in\mathbb{C}$, there exists a
unique bounded continuous solution of
\begin{align}
\overline{\partial}\mu_{1} &  =\frac{1}{2}e_{k}u\overline{\mu_{2}%
},\label{eq:mu.dbar}\\
\overline{\partial}\mu_{2} &  =\frac{1}{2}e_{k}u\overline{\mu_{1}},\nonumber\\
\lim_{\left\vert z\right\vert \rightarrow\infty}\left(  \mu_{1}(z,k),\mu
_{2}(z,k)\right)   &  =\left(  1,0\right)  .\nonumber
\end{align}
We then define $r=\mathcal{R}u$ by%
\begin{equation}
r(k)=-\frac{1}{\pi}\int e_{k}(z)u(z)\overline{\mu_{1}(z,k)}%
~dA(z).\label{eq:r.rep}%
\end{equation}

On the other hand, it can be shown that
\begin{equation}
(\nu_{1},\nu_{2})=\left(  \mu_{1},e_{k}\overline{\mu_{2}}\right)
\label{eq:nu.def}%
\end{equation}
solve a $\overline{\partial}$-problem in the $k$ variable:
\begin{align}
\overline{\partial}_{k}\nu_{1} &  =\frac{1}{2}e_{k}\overline{r}\overline
{\nu_{2}},\label{eq:nu.dbar}\\
\overline{\partial}_{k}\nu_{2} &  =\frac{1}{2}e_{k}\overline{r}\overline
{\nu_{1}},\nonumber\\
\lim_{\left\vert k\right\vert \rightarrow\infty}\left(  \nu_{1}(z,k),\nu
_{2}(z,k)\right)   &  =\left(  1,0\right)  ,\nonumber
\end{align}
and that this solution is unique within the bounded continuous functions.
Given $r\in H^{1,1}(\mathbb{R}^{2})$, we solve the $\overline{\partial}%
$-system (\ref{eq:nu.dbar}) and define $u=\mathcal{I}r$ by%
\begin{equation}
u(z)=-\frac{1}{\pi}\int e_{-k}(z)r(k)\nu_{1}(z,k)~dA(k).\label{eq:u.rep}%
\end{equation}
In \cite{P}, we proved:

\begin{theorem}
\label{thm:Lip.pre}The maps $\mathcal{R}$ and $\mathcal{I}$, initially defined
on $\mathcal{S}(\mathbb{R}^{2})$, extend to LLCM's from $H^{1,1}%
(\mathbb{R}^{2})$ to itself. Moreover $\mathcal{R}\circ\mathcal{I}%
=\mathcal{I}\circ\mathcal{R}=I$, where $I$ denotes the identity map on
$H^{1,1}(\mathbb{R}^{2})$.
\end{theorem}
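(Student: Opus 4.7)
The plan is to reduce both scattering maps to integral equations solvable by Neumann iteration, bound each iterate via the Brown--Brascamp--Lieb estimate of Proposition \ref{prop:brown}, and deduce the $H^{1,1}$ mapping and LLCM properties from summable multilinear bounds; the inversion identity will then follow by comparing uniqueness for the two $\overline{\partial}$-problems.

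First, applying the Cauchy transform $P$ to (\ref{eq:mu.dbar}) and using the normalization at infinity gives the coupled integral system $\mu_1 = 1 + \tfrac12 P(e_k u\,\overline{\mu_2})$, $\mu_2 = \tfrac12 P(e_k u\,\overline{\mu_1})$. Setting $T_k\psi = \tfrac12 P(e_k u\,\overline{\psi})$ (antilinear over $\mathbb{C}$), one obtains $\mu_1 = 1 + T_k^2 \mu_1$, solved by the Neumann series $\mu_1 = \sum_{n\ge 0} T_k^{2n} 1$. Each term is a $2n$-fold multilinear expression in $u$ built from Cauchy kernels, which, once plugged into (\ref{eq:r.rep}), takes exactly the form $\langle e_k u, W_1\cdots W_{2n} 1\rangle$ of Remark \ref{rem:multi1}. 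Proposition \ref{prop:brown} therefore controls each summand in $L^2_k$ by $C^{2n+1}\|u\|_2^{2n+1}$, which is summable for $\|u\|_2$ small; the general case is handled by Brown's splitting trick (write $u=u_1+u_2$ with $\|u_1\|_2$ small and $u_2\in L^\infty$, then treat the $u_2$-contributions as small perturbations in a fixed-point argument). This yields existence, uniqueness, and an $L^2_k$-bound $\|r\|_2 \le F(\|u\|_{2})$.

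Second, for the weighted and derivative pieces of the $H^{1,1}$-norm, I would use the elementary identities $\overline{\partial}_k e_k = \overline z\, e_k$ and $\partial_z e_k = -k\, e_k$. Multiplying (\ref{eq:r.rep}) by $k$ and integrating by parts in $z$ trades the weight $|k|$ for a $z$-derivative landing on $u\overline{\mu_1}$; since differentiating the Neumann series for $\mu_1$ commutes with the structure of the multilinear integrand (each $\partial_z$ hits either an explicit $u$ or a Cauchy kernel, producing the Beurling transform $\mathcal{S}$ which is bounded on $L^2$), the Brown estimate again applies term by term and gives $\||k|r\|_2 \le F(\|u\|_{H^{1,0}})$. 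Similarly, differentiating in $\overline k$ brings down a factor of $\overline z$ which is controlled by $\|zu\|_2 \le \|u\|_{H^{1,1}}$. Combining yields $\|r\|_{H^{1,1}} \le F(\|u\|_{H^{1,1}})$.

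Third, local Lipschitz continuity follows by applying the same telescoping decomposition to $\mu_1(u)-\mu_1(\tilde u)$: each Neumann term is a sum of $(2n+1)$ multilinear expressions, in exactly one of which the factor $u-\tilde u$ appears while the remaining factors come from $u$ or $\tilde u$; Proposition \ref{prop:brown} gives a summable bound of the form $C\,(\|u\|_{H^{1,1}}+\|\tilde u\|_{H^{1,1}})^{2n}\|u-\tilde u\|_{H^{1,1}}$. The treatment of $\mathcal{I}$ is formally identical by the symmetry $z\leftrightarrow k$, $u\leftrightarrow r$ between the systems (\ref{eq:mu.dbar}) and (\ref{eq:nu.dbar}). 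For the inversion identity, one verifies that the pair $(\nu_1,\nu_2)=(\mu_1, e_k\overline{\mu_2})$ defined in (\ref{eq:nu.def}) satisfies (\ref{eq:nu.dbar}) with scattering data $r=\mathcal{R}u$, and that reading (\ref{eq:u.rep}) off this same pair returns $u$; by uniqueness in the $\overline{\partial}_k$ problem, $\mathcal{I}\mathcal{R}u=u$, and the reverse composition is handled symmetrically.

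The main technical obstacle will be step two: making sure that all the derivatives and weights commute with the Neumann expansion in a way that still produces integrands falling under the hypothesis of Proposition \ref{prop:brown} with $L^2$-factors, and keeping the resulting $n$-dependence summable uniformly on bounded balls of $H^{1,1}$. This is where Brown's splitting of $u$ and the Beurling-transform identity $\mathcal{S}\overline{\partial}=\partial$ of Section \ref{sec:prelim} will carry the analysis.
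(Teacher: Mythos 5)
This theorem is not proved in the present paper: it is quoted verbatim from \cite{P}, and \S\ref{sec:IR.def} only recalls the toolkit used there (the operator $T\psi=\tfrac12 Pe_k u\overline{\psi}$, the integral equation $w_1=1+T^2w_1$, the \emph{finite} expansions of Lemma \ref{lemma:1} with a resolvent-controlled remainder, and the multilinear estimates of Proposition \ref{prop:brown}). Your architecture — reduce to the integral equation, expand, control each multilinear term by the Brown--Brascamp--Lieb estimate, trade weights for derivatives by integration by parts using $\partial_z e_k=-ke_k$ and the Beurling transform, and obtain the inversion from the identification $(\nu_1,\nu_2)=(\mu_1,e_k\overline{\mu_2})$ — is the same as that of \cite{P}, so the overall plan is sound.

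The genuine gap is in your treatment of large data. The full Neumann series $\mu_1=\sum_{n\ge0}T_k^{2n}1$ with termwise bounds $C^{2n+1}\|u\|_2^{2n+1}$ converges only on a small ball, and your proposed remedy — split $u=u_1+u_2$ with $\|u_1\|_2$ small and ``treat the $u_2$-contributions as small perturbations in a fixed-point argument'' — does not work as stated, because the $u_2$-part of the operator is in no sense small. What actually closes the argument (and is the content of the assertions recalled in \S\ref{sec:IR.def}) is: (i) the decay estimate $\|T^2\|_{L^p\to L^p}\le C_p\|u\|_{H^{1,1}}^2(1+|k|)^{-1}$, which makes the series converge for large $|k|$ and already uses the $H^{1,1}$ (not merely $L^2$) regularity of $u$; (ii) for the remaining compact set of $k$, compactness of the relevant operator plus the Fredholm alternative, with triviality of the kernel of $I-T^2$ supplied by a Vekua-type Liouville theorem for solutions of $\overline{\partial}w=a\overline{w}$ vanishing at infinity. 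Your proposal never invokes this uniqueness theorem, yet without it neither existence of the CGO solutions for general $u\in H^{1,1}$ nor the uniform boundedness of $(I-T^2)^{-1}$ on bounded sets (needed for the remainder $R_{1,N}$ in Lemma \ref{lemma:1}, hence for the LLCM property on \emph{all} bounded balls rather than a small one) is available. A secondary, smaller gap: in the inversion step, the fact that $(\mu_1,e_k\overline{\mu_2})$ solves the $\overline{\partial}_k$-system \eqref{eq:nu.dbar} with data $r=\mathcal{R}u$ is itself a nontrivial computation (differentiation of the CGO solutions in the spectral parameter), and recovering $u$ from \eqref{eq:u.rep} requires the large-$k$ residue expansion of $\nu_1$; both deserve more than the one sentence you give them.
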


We now describe three basic tools used in \cite{P} to analyze the generic
system%
\begin{align}
\overline{\partial}w_{1}  &  =\frac{1}{2}e_{k}u\overline{w_{2}}%
,\label{eq:w.dbar}\\
\partial w_{2}  &  =\frac{1}{2}e_{k}u\overline{w_{1}},\nonumber\\
\lim_{\left\vert z\right\vert \rightarrow\infty}\left(  w_{1}(z,k),w_{2}%
(z,k)\right)   &  =\left(  1,0\right) \nonumber
\end{align}
for unknown functions $w_{1}(z,k)$ and $w_{2}(z,k)$, where $k$ is a complex
parameter, $u\in H^{1,1}(\mathbb{R}^{2})$. We refer the reader to \cite{P} for
the proofs. We don't state the obvious analogues of the facts below when the
roles of $k$ and $z$ are reversed, but use them freely in what follows.

1. \emph{Finite }$L^{p}$-\emph{Expansions}. In \cite{P} it is shown that the
system (\ref{eq:w.dbar}) has a unique solution in $L_{z}^{\infty}$. This
result, and further analysis of the solution, follows from the following facts
that we recall from \S 3 of \cite{P}. Let $T$ be the antilinear operator%
\[
T\psi=\frac{1}{2}Pe_{k}u\overline{\psi}%
\]
which is a bounded operator from $L^{p}$ to itself for $p\in(2,\infty]$ if
$u\in H^{1,1}$ by\ Lemma \ref{lemma:P}(i). The system (\ref{eq:w.dbar}) is
equivalent to the integral equation%
\[
w_{1}=1+T^{2}w_{1}%
\]
and the auxiliary formula $w_{2}=Tw_{1}$. The operator $I-T^{2}$ has trivial
kernel as a map from $L^{p}(\mathbb{R}^{2})$ to itself for any $p\in
(2,\infty]$, and the estimate
\[
\left\Vert T^{2}\right\Vert _{L^{p}\rightarrow L^{p}}\leq C_{p}\left\Vert
u\right\Vert _{H^{1,1}}^{2}\left(  1+\left\vert k\right\vert \right)  ^{-1}%
\]
holds for any $p\in(2,\infty)$.  For any $p\in(2,\infty)$, the resolvent
$\left(  I-T^{2}\right)  ^{-1}$ is bounded uniformly in $k\in\mathbb{C}$ and
$u$ in bounded subsets of $H^{1,1}$ as an operator from $L^{p}$ to itself.
Note that if $u\in H^{1,1}$, the expression $T1=\frac{1}{2}Pe_{k}u$ is a
well-defined element of $L^{p}$ for all $p\in(2,\infty]$. The unique solution
of (\ref{eq:w.dbar}) is given by
\begin{align*}
w_{1}-1 &  =\left(  I-T^{2}\right)  ^{-1}T^{2}1,\\
w_{2} &  =Tw_{1}.
\end{align*}
From these facts, one has (see \S 3 of \cite{P}):

\begin{lemma}
\label{lemma:1}(Finite $L^{p}$-expansions) For any positive integer $N$, the
expansions%
\begin{align*}
w_{1}-1  &  =\sum_{j=1}^{N}T^{2j}1+R_{1,N}\\
w_{2}  &  =\sum_{j=1}^{N}T^{2j-1}1+R_{2,N}%
\end{align*}
hold, where the maps
\begin{align*}
u  &  \mapsto\left(  1+\left\vert \diamond\right\vert \right)  ^{N}%
R_{1,N}(~\cdot~,~\diamond~),\\
u  &  \mapsto\left(  1+\left\vert \diamond\right\vert \right)  ^{N}%
R_{2,N}(~\cdot~,~\diamond~)
\end{align*}
are LLCM's\ from $H^{1,1}\left(  \mathbb{R}^{2}\right)  $ into $L_{k}^{\infty
}\left(  L_{z}^{p}\right)  .$
\end{lemma}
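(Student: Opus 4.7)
The plan is to obtain the two expansions by iterating the integral equations for $w_1$ and $w_2$, and then to estimate the resulting remainders using the operator-norm bound $\|T^2\|_{L^p \to L^p} \leq C_p \|u\|_{H^{1,1}}^2 (1+|k|)^{-1}$ recalled above. First I would iterate $w_1 = 1 + T^2 w_1$ a total of $N$ times to obtain $w_1 - 1 = \sum_{j=1}^N (T^2)^j 1 + (T^2)^{N+1} w_1$, which identifies $R_{1,N} = (T^2)^{N+1} w_1$. For $w_2 = T w_1$, I would observe that $T^2 w_2 = T^3 w_1 = T(w_1 - 1) = w_2 - T(1)$, so that $w_2$ itself satisfies the fixed-point equation $w_2 = T(1) + T^2 w_2$; iterating this $N$ times yields $w_2 = \sum_{j=1}^N T^{2j-1} 1 + (T^2)^N w_2$, and hence $R_{2,N} = (T^2)^N w_2$.

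For the size estimate, I would split $R_{1,N} = (T^2)^{N+1}(w_1 - 1) + (T^2)^N (T^2\, 1)$ and use the iterated bound $\|(T^2)^m\|_{L^p \to L^p} \leq C^m (1+|k|)^{-m}$, valid for $u$ in any bounded subset of $H^{1,1}$. Two auxiliary facts then close the estimate: that $\|w_1 - 1\|_{L_z^p}$ is uniformly bounded in $k$, from $w_1 - 1 = (I - T^2)^{-1} T^2(1)$ together with the uniform $L^p$ resolvent bound recalled from \cite{P}; and that $\|T^2(1)\|_{L_z^p}$ is uniformly bounded in $k$, by direct application of Lemma \ref{lemma:P} to $T^2(1) = \tfrac{1}{4} P\!\left(e_k u\, \overline{P(e_k u)}\right)$. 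Together these give $\|R_{1,N}(\cdot,k)\|_{L_z^p} \leq C(1+|k|)^{-N}$, and hence $(1+|k|)^N R_{1,N} \in L_k^\infty(L_z^p)$. The argument for $R_{2,N}$ is analogous: one writes $w_2 = T(1) + T(w_1 - 1)$ and checks via Lemma \ref{lemma:P}(i),(iii) that $\|w_2\|_{L_z^p}$ is bounded uniformly in $k$.

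For local Lipschitz continuity I would use the bilinearity of $T_u^2$ in $u$, together with the telescoping identity
\[
(T_u^2)^m - (T_{u'}^2)^m = \sum_{\ell=0}^{m-1} (T_u^2)^\ell (T_u^2 - T_{u'}^2)(T_{u'}^2)^{m-1-\ell}
\]
and the resolvent identity $w_1(u) - w_1(u') = (I - T_u^2)^{-1}(T_u^2 - T_{u'}^2)\, w_1(u')$, so that the difference $R_{1,N}(u) - R_{1,N}(u')$ reduces to a finite sum of terms each containing a single factor $T_u^2 - T_{u'}^2$ of operator norm at most $C(1+|k|)^{-1}\|u - u'\|_{H^{1,1}}$, while the remaining factors contribute exactly $N$ more copies of $(1+|k|)^{-1}$. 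The principal technical obstacle I anticipate is the bookkeeping that guarantees every auxiliary quantity in sight---$w_1 - 1$, $w_2$, $T^2(1)$, and the resolvent $(I - T^2)^{-1}$---depends locally Lipschitz continuously on $u$ with constants uniform in $k$. Each such quantity, however, is built from finitely many applications of the Cauchy transform $P$ to products involving $u$ and to resolvents of $T^2$, so its Lipschitz dependence on $u$ reduces in every case to Lemma \ref{lemma:P} and to the uniform resolvent bound already established in \cite{P}.
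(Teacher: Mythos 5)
Your proposal is correct and follows essentially the same route the paper intends: the paper states Lemma \ref{lemma:1} as a direct consequence of the facts recalled just before it (the fixed-point equations $w_1=1+T^2w_1$, $w_2=Tw_1$, the bound $\|T^2\|_{L^p\to L^p}\leq C\|u\|_{H^{1,1}}^2(1+|k|)^{-1}$, and the uniform resolvent bound), deferring details to \S 3 of \cite{P}, and your iteration of the Neumann series with the splitting $R_{1,N}=(T^2)^{N+1}(w_1-1)+(T^2)^N(T^2 1)$ plus the telescoping/resolvent identities for Lipschitz continuity is exactly the intended argument. No gaps.
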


2. \emph{Multilinear Estimates}. Substituting the expansions into the
representation formulas (\ref{eq:u.rep})\ and (\ref{eq:r.rep}) leads to
expressions of the form%
\[
\left\langle e_{\ast}w,F_{j}\right\rangle
\]
where $e_{\ast}$ denotes $e_{k}$ or $e_{-k}$, $w$ is a monomial in $u$ and its
derivatives, and $F_{j}$ denotes $T^{2j}1$ or $\overline{T^{2j}1}$ for
$j\geq1$. We assume that $w$ is bounded in $L^{2}$ norm by a power of
$\left\Vert u\right\Vert _{H^{2,1}}$. The following fact is an immediate
consequence of Remark \ref{rem:multi1}.

\begin{lemma}
\label{lemma:2}The map $u\mapsto\left\langle e_{\ast}w,F_{j}\right\rangle $ is
a LLCM from $H^{2,1}\left(  \mathbb{R}^{2}\right)  $ to $L_{k}^{2}\left(
\mathbb{R}^{2}\right)  $.
\end{lemma}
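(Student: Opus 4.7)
\emph{Plan.} The lemma is a direct corollary of Remark~\ref{rem:multi1} once we unfold $F_j = T^{2j}1$ as the nested operator product $W_1 W_2\cdots W_{2j}1$ appearing there, with every $W_i$ built from the same $u$. The strategy is: (a) expand $F_j$ as an iterated Cauchy integral in the $z$-variables; (b) identify the pairing $\langle e_* w, F_j\rangle(k)$ as the multilinear $L^2_k$-valued form to which Remark~\ref{rem:multi1} applies, with the ``$u_0$'' slot occupied by $w$; (c) invoke the Brascamp--Lieb-type estimate of Proposition~\ref{prop:brown} together with the hypothesis $\|w\|_2 \leq C\|u\|_{H^{2,1}}^{\alpha}$.

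For steps (a)--(b), iterating $T\psi = \tfrac12 P e_k u \overline{\psi}$ expresses $F_j$ as a $2j$-fold integral whose kernel is a product of Cauchy factors $|z_{\ell-1}-z_\ell|^{-1}$ and whose integrand involves products of $u$'s (some conjugated) carrying one $e_k$-phase per $W$-operator. Pairing with $e_* w$ appends one further variable $z_0$ weighted by $w(z_0)$ and $e_*(z_0)$, giving a $(2j+1)$-fold integral. Using $\overline{e_k} = e_{-k}$, a short bookkeeping argument shows that the net oscillation in the $z$-variables takes the form $e^{\pm i\,\mathrm{Re}(k\zeta)}$ with $\zeta = \sum_{\ell=0}^{2j}(-1)^{\ell}z_\ell$, the sign depending on the choices $e_* \in \{e_k,e_{-k}\}$ and $F_j \in \{T^{2j}1,\overline{T^{2j}1}\}$ but with the absolute value of the integrand independent of these choices.

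Dualizing in $L^2_k$ transfers the oscillation onto a test function $\rho$ via a Fourier-type factor $\widehat{\rho}(\pm\zeta)$ controlled by $\|\rho\|_2$, which is exactly the $\rho$-slot of the expression $\Lambda_j$. Inequality (\ref{ineq:brown}) then yields $\|\langle e_* w, F_j\rangle\|_{L^2_k} \leq C\|w\|_2 \|u\|_2^{2j} \leq C\|u\|_{H^{2,1}}^{\alpha+2j}$, establishing boundedness on $H^{2,1}$-bounded sets. The LLCM conclusion follows from the underlying multilinearity in the inputs $(w; u_1,\ldots, u_{2j})$, together with the fact that $u \mapsto w$ is a polynomial (hence LLCM) map from $H^{2,1}$ to $L^2$ by hypothesis, via the standard telescoping argument noted in \S\ref{sec:prelim}. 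The only genuine obstacle is the bookkeeping of signs and conjugations needed to confirm the phase identification in step (b); once the phase has been correctly aggregated into $e^{\pm i\,\mathrm{Re}(k\zeta)}$, the estimate drops out of Proposition~\ref{prop:brown} essentially without further work.
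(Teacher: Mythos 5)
Your proposal is correct and follows exactly the route the paper intends: the paper's entire proof is the observation that the lemma is an immediate consequence of Remark~\ref{rem:multi1}, and your steps (a)--(c) simply unfold why that remark applies (iterating $T$ to produce the $(2j+1)$-fold Cauchy-kernel integral, placing $w$ in the $u_0$ slot, and invoking Proposition~\ref{prop:brown} plus multilinearity for the Lipschitz bound). The only quibble is cosmetic: since $e_k(z)=e^{\overline{k}\overline{z}-kz}$ the aggregated phase is of the form $e^{-2i\,\mathrm{Im}(k\zeta)}$ rather than $e^{\pm i\,\mathrm{Re}(k\zeta)}$, which changes nothing in the $L_k^2$ estimate.
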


3. \emph{Large-Parameter Expansions}. Finally, the following large-$z$ finite
expansions for $w_{1}$ and $w_{2}$ will be useful. We omit the straightforward
computational proof.

\begin{lemma}
\label{lemma:3}For $u\in H^{1,1}\left(  \mathbb{R}^{2}\right)  $,
\begin{align*}
w_{1}(z,k)-1  &  =-\frac{1}{2\pi z}\int e_{k}(z^{\prime})u(z^{\prime
})\overline{w_{2}(z^{\prime},k)}~dm(z^{\prime})\\
&  -\frac{1}{2\pi z}\int\frac{e_{k}(z^{\prime})}{z-z^{\prime}}z^{\prime
}u(z^{\prime})\overline{w_{2}(z^{\prime},k)}~dm(z^{\prime})
\end{align*}
and similarly%
\begin{align*}
w_{2}(z,k)  &  =-\frac{1}{2\pi z}\int e_{k}(z^{\prime})u(z^{\prime}%
)\overline{w_{1}(z^{\prime},k)}~dm(z^{\prime})\\
&  -\frac{1}{2\pi z}\int\frac{e_{k}(z^{\prime})}{z-z^{\prime}}z^{\prime
}u(z^{\prime})\overline{w_{1}(z^{\prime},k)}~dm(z^{\prime})
\end{align*}

\end{lemma}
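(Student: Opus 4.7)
The plan is to solve each equation in (\ref{eq:w.dbar}) by inverting $\bar\partial$ via the Cauchy transform $P$, and then to Taylor-expand the Cauchy kernel about $z = \infty$ to exactly first order. Starting from the boundary conditions $w_1 \to 1$ and $w_2 \to 0$, one first writes
\begin{equation*}
w_1(z,k) - 1 = -\frac{1}{2\pi}\int \frac{e_k(z') u(z') \overline{w_2(z',k)}}{z-z'}\,dm(z'),
\end{equation*}
together with the analogous integral representation for $w_2$, the overall sign fixed by the paper's convention for $P$ as a formal inverse of $\bar\partial$. Absolute convergence of each representation is supplied by $u \in H^{1,1}$ combined with the uniform $L_z^p$ bounds ($p>2$) on $w_1,w_2$ recorded just before Lemma~\ref{lemma:1}, invoked through Lemma~\ref{lemma:P}(i) or (iii).

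Next I would substitute the exact identity
\begin{equation*}
\frac{1}{z-z'} = \frac{1}{z} + \frac{z'}{z(z-z')},
\end{equation*}
valid for $z \ne 0, z'$, inside each Cauchy integral and split. The $1/z$ piece collects, after factoring $1/(2\pi z)$ out, to the first summand of the lemma, and the $z'/(z(z-z'))$ piece collects, after the same factoring, to exactly the second summand. Fubini is legitimate provided that both resulting integrals are absolutely convergent: the first is finite because $u \in L^1(\mathbb{R}^2)$ (which follows from $u \in H^{1,1}$) and $w_j(\cdot,k)$ is in $L_z^p$ for $p > 2$ uniformly in $k$; the second is finite because the extra weight $z'$ combines with $u \in H^{0,1}$ to give $z' u \in L^2$, so that $z' u \overline{w_j} \in L^s$ for some $s \in (1,2)$, and Lemma~\ref{lemma:P}(i) then places its convolution against $1/(z-z')$ in $L_z^p$ for a suitable $p > 2$.

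No step presents a real obstacle; the statement is essentially a single Laurent truncation of the fundamental solution of $\bar\partial$. The only bookkeeping is to fix the overall sign from the paper's sign convention on $P$, and to cite the previously established $L^p$-control of $w_1, w_2$ to legitimise the termwise split. This is why the author is content to call the computation straightforward and omit the details.
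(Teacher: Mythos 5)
Your argument is correct and is exactly the ``straightforward computational proof'' the paper omits: write $w_1-1=P\bigl(\tfrac12 e_k u\overline{w_2}\bigr)$ (and the analogue for $w_2$) from the integral-equation formulation, substitute the identity $\frac{1}{z-z'}=\frac{1}{z}+\frac{z'}{z(z-z')}$, and justify the split by the $L^p_z$ bounds on $w_1,w_2$ together with $u\in H^{1,1}$, which puts $u$ and $z'u(z')$ in the right Lebesgue spaces via Lemma \ref{lemma:P}. The one small point you gloss over is that the second equation of (\ref{eq:w.dbar}) as printed is a $\partial$-equation, so inverting it literally would use $\overline{P}$ and produce $\overline{z}$ and $\overline{z}-\overline{z'}$ in the expansion of $w_2$; the lemma's stated form (and your derivation) instead match the system (\ref{eq:mu.dbar}), where both equations carry $\overline{\partial}$, so your computation is the intended one.
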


Analogous expansions hold for the $\overline{\partial}$-problem in the $k$ variables.

\section{Restrictions of Scattering Maps}

\label{sec:IR}

In this section we prove Theorem \ref{thm:Lip}. In virtue of Theorem
\ref{thm:Lip.pre}, it suffices to show that the maps $H^{2,1}\ni
u\mapsto\left\vert \diamond\right\vert ^{2}r\left(  \diamond\right)  $ and
$H^{1,2}\ni r\mapsto\Delta u\in L^{2}$ are LLCM's. First, we prove:

\begin{lemma}
\label{lemma:restrict.R}The map $u\mapsto\left\vert \diamond\right\vert
^{2}r\left(  \diamond\right)  $ is a LLCM\ from $H^{2,1}(\mathbb{R}^{2})$ to
$L^{2}(\mathbb{R}^{2})$.
\end{lemma}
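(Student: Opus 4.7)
The plan is to trade the weight $|k|^2$ on $r(k) = -\frac{1}{\pi}\int e_k u\,\overline{\mu_1}\,dA(z)$ for two $z$-derivatives on $u\overline{\mu_1}$. The key identity is $|k|^2 e_k(z) = -\frac{1}{4}\Delta_z e_k(z)$, immediate from $\overline{\partial}_z e_k = \bar k e_k$ and $\partial_z e_k = -k e_k$. Integrating by parts twice in $z$, justified by the $L^p_z$-decay of $\mu_1 - 1$ supplied by Lemma \ref{lemma:1} together with the $H^{2,1}$-regularity of $u$, yields
$$|k|^2 r(k) \;=\; \frac{1}{4\pi}\int e_k(z)\,\Delta_z\bigl(u\,\overline{\mu_1}\bigr)\,dA(z).$$
Applying $\Delta = 4\overline{\partial}\partial$ via the Leibniz rule, and then using the $\overline{\partial}$-equations \eqref{eq:mu.dbar} together with $\overline{\partial}\overline{\mu_1} = \overline{\partial\mu_1}$ and $\partial\overline{\mu_1} = \overline{\overline{\partial}\mu_1} = \frac{1}{2}e_{-k}\bar u\mu_2$ to eliminate all derivatives of $\overline{\mu_1}$, one sees that $|k|^2 r(k)$ is a finite sum of oscillatory integrals of the form $\int e_k\, g\, \mu_j\, dA$ with $j \in \{1,2\}$ and $g$ a polynomial in $u$ and its first two derivatives lying in $L^2$ with norm controlled by a polynomial in $\|u\|_{H^{2,1}}$, plus one residual \emph{non-oscillatory} term proportional to $\bar k\int|u|^2\mu_2\,dA$ in which the phase $e_k$ has cancelled against the $e_{-k}$ produced by conjugation in $\partial\overline{\mu_1}$.

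For the oscillatory terms I substitute the finite $L^p$-expansions $\mu_1 - 1 = \sum_{\ell=1}^{N} T^{2\ell}1 + R_{1,N}$ and $\mu_2 = \sum_{\ell=1}^N T^{2\ell-1}1 + R_{2,N}$ from Lemma \ref{lemma:1}. The contribution of the constant $1$ in $\overline{\mu_1}$ produces a Fourier transform of $g \in L^2$ in the $k$-variable and belongs to $L^2_k$ by Plancherel, with bound $\|g\|_2$. Each finite $T^j 1$-contribution is a multilinear form of the type controlled by Remark \ref{rem:multi1} (equivalently Lemma \ref{lemma:2}), hence lies in $L^2_k$ with polynomial bound in $\|u\|_{H^{2,1}}$. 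The remainders $R_{1,N}$ and $R_{2,N}$ decay like $(1+|k|)^{-N}$ in the mixed norms of Lemma \ref{lemma:1}, so for $N$ large enough their $L^2_k$-contributions are finite. When derivatives $\partial(\mu_1 - 1)$ appear via the $\overline{\partial}\overline{\mu_1}$ branch of the Leibniz expansion, I rewrite them using the Beurling-transform identity $\partial(\mu_1 - 1) = \mathcal{S}(\overline{\partial}(\mu_1 - 1)) = \frac{1}{2}\mathcal{S}(e_k u \overline{\mu_2})$, whose $L^p$-boundedness returns the analysis to the $\mu_2$-expansion.

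The main obstacle is the non-oscillatory term $\bar k\int|u|^2\mu_2\,dA$. To handle it I decompose $\mu_2 = \frac{1}{2}Pe_k u + (\mu_2 - T 1)$ and use the identity
$$\bar k\,Pe_k u \;=\; e_k u \;-\; P\bigl(e_k\overline{\partial} u\bigr),$$
which follows from $\bar k e_k = \overline{\partial}_z e_k$ and $P\overline{\partial} = I$ on functions vanishing at infinity. Combined with the adjoint relation $\int h\cdot (Pf)\,dA = -\int (Ph)\cdot f\,dA$, this converts $\bar k\int|u|^2 T 1\,dA$ into Fourier transforms of $L^2$-functions; the nontrivial piece is a multiple of $\int e_k (\overline{\partial} u)\,P(|u|^2)\,dA$, which is in $L^2_k$ by Plancherel because $P(|u|^2)\in L^\infty$ by Lemma \ref{lemma:P}(ii) (using $u \in H^{2,1} \hookrightarrow L^\infty \cap L^2$, so $|u|^2 \in L^p$ for all $p \in [1,\infty)$) and $\overline{\partial} u \in L^2$. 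The tail $\mu_2 - T 1$ is expanded via Lemma \ref{lemma:1} and the same $\bar k$-trading identity is applied at the outermost Cauchy integral in each $T^{2\ell-1}1$, generating multilinear forms of Brown type controlled by Remark \ref{rem:multi1} plus a rapidly-decaying remainder.

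All intermediate quantities depend multilinearly on $u$ modulo composition with the resolvent $(I - T^2)^{-1}$, which is uniformly bounded on $L^p$ for $u$ in bounded subsets of $H^{1,1}$ and depends Lipschitz-continuously on $u$ (by \S 3 of \cite{P}), so the same argument applied to the difference $u - v$ delivers the local Lipschitz bound
$$\bigl\|\,|\diamond|^2(\mathcal{R}u - \mathcal{R}v)(\diamond)\bigr\|_{L^2} \;\le\; C(M)\,\|u - v\|_{H^{2,1}}$$
for $u, v$ in the $H^{2,1}$-ball of radius $M$. The heart of the difficulty is the nested integration by parts that recovers the missing $\bar k$-decay in the non-oscillatory term; this is precisely where the extra derivative and weight carried by $H^{2,1}$, compared with $H^{1,1}$, are consumed.
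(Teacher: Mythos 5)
Your overall strategy --- trading $\left\vert k\right\vert ^{2}$ for two $z$-derivatives by parts, splitting off the Fourier-transform term, expanding $\mu_{1}-1$ and $\mu_{2}$ via Lemma \ref{lemma:1}, invoking the multilinear estimates of Remark \ref{rem:multi1}, and isolating the non-oscillatory term $\overline{k}\int\left\vert u\right\vert ^{2}\mu_{2}$ --- is essentially the paper's, and your treatment of the non-oscillatory term (the identity $\overline{k}\,Pe_{k}u=e_{k}u-P\left(  e_{k}\overline{\partial}u\right)  $ together with the adjoint relation for $P$) is a legitimate variant of the integration by parts in (\ref{eq:nuke.the.kbar}). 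The genuine gap is at the cross term of your Leibniz expansion. Writing $\Delta\left(  u\overline{\mu_{1}}\right)  =4\overline{\partial}\partial\left(  u\overline{\mu_{1}}\right)  $ unavoidably produces $\left(  \partial u\right)  \overline{\partial}\overline{\mu_{1}}=\left(  \partial u\right)  \overline{\partial\mu_{1}}$, i.e.\ the derivative of $\mu_{1}$ that is \emph{not} supplied by the $\overline{\partial}$-system (\ref{eq:mu.dbar}). Your fix $\partial\mu_{1}=\frac{1}{2}\mathcal{S}\left(  e_{k}u\overline{\mu_{2}}\right)  $ is correct as an identity, but it does not ``return the analysis to the $\mu_{2}$-expansion'': the resulting integrals $\int e_{k}\left(  \partial u\right)  \overline{\mathcal{S}\left(  e_{k}u\overline{T^{2\ell-1}1}\right)  }\,dA(z)$ have a Beurling kernel $(z-w)^{-2}$ interposed between the oscillation and the Cauchy chain, so they are not of the form $\left\langle e_{\ast}w,F_{j}\right\rangle $ covered by Lemma \ref{lemma:2} or Proposition \ref{prop:brown}; the uniform $L_{z}^{p}$-bounds you cite give only an $L_{k}^{\infty}$ bound on these integrals, not the required $L_{k}^{2}$ bound. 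The paper avoids the bad derivative entirely: it removes one factor of $k$ by a $\partial$-integration by parts, and then removes the remaining $\overline{k}$ in the term containing $\overline{\mu_{1}}-1$ by writing $\overline{k}=(\overline{k}/k)\,k$ and integrating by parts with $\partial$ a second time, at the cost of a harmless unimodular factor; in this way only $\partial\overline{\mu_{1}}=\overline{\overline{\partial}\mu_{1}}$ ever appears.

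A second, more technical omission: in the term $\int e_{k}\left(  \overline{\partial}\partial u\right)  \left(  \overline{\mu_{1}}-1\right)  $ the density $\Delta u$ lies only in $L^{2}$, while the remainder $R_{1,N}$ of Lemma \ref{lemma:1} lives in $L_{k}^{\infty}\left(  L_{z}^{p}\right)  $ for $p>2$; H\"{o}lder then requires the density to lie in $L^{p^{\prime}}$ for some $p^{\prime}<2$, which $\Delta u$ need not do. This is precisely why the paper inserts the cutoff $1=\chi+\left(  1-\chi\right)  $ and treats the exterior region with the large-$z$ expansion of Lemma \ref{lemma:3}, extracting a factor $z^{-1}$ that places $\left(  1-\chi\right)  \left(  \partial^{2}u\right)  z^{-1}$ in $L^{2}$ and converts that contribution into a Fourier transform times a bounded function of $k$. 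Your proposal skips this step, so the pairing of $\overline{\partial}\partial u$ against the expansion remainder is not justified as written.
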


\begin{proof}
We carry out all computations on $u$ $\in\mathcal{C}_{0}^{\infty}%
(\mathbb{R}^{2})$ and extend by density to $H^{2,1}(\mathbb{R}^{2})$. Note
that $\left\Vert u\right\Vert _{p}\leq C_{p}\left\Vert u\right\Vert _{H^{2,1}%
}$ for all $p\in\left(  1,\infty\right)  $ and $\left\Vert \partial
u\right\Vert _{p}$ $\leq\left\Vert u\right\Vert _{H^{2,1}}$ for $p\in
\lbrack2,\infty)$. An integration by parts using (\ref{eq:r.rep}) and the
identity $\partial e_{k}=-ke_{k}$ shows that (up to trivial factors)%
\begin{align*}
\left\vert k\right\vert ^{2}r(k)  &  =-\overline{k}\int e_{k}\left(  \partial
u\right)  -\overline{k}\int e_{k}\left(  \partial u\right)  \left(
\overline{\mu_{1}}-1\right)  -\frac{\overline{k}}{2}\int\left\vert
u\right\vert ^{2}\mu_{2}\\
&  =I_{1}+I_{2}+I_{3}%
\end{align*}
where in the last term we used
\begin{equation}
\overline{\partial}\mu_{1}=\frac{1}{2}e_{k}u\overline{\mu_{2}}.
\label{eq:mu1.dbar}%
\end{equation}

$I_{1}$: This term is the Fourier transform of $\partial\overline{\partial}u$
and hence defines a linear map from $H^{2,1}$ to $L_{k}^{2}$.

$I_{2}$: An integration by parts using (\ref{eq:r.rep}), the identity
$\partial\left(  e_{k}\right)  =-ke_{k}$, and (\ref{eq:mu1.dbar}) again shows
that%
\begin{align*}
I_{2} &  =\frac{\overline{k}}{k}\left[  \int e_{k}\left(  \partial
^{2}u\right)  \left(  \overline{\mu_{1}}-1\right)  +\frac{1}{2}\int
\overline{u}\partial u~\mu_{2}\right]  \\
&  =I_{21}+I_{22}.
\end{align*}
In $I_{21}$ we insert $1=\chi+\left(  1-\chi\right)  $ where $\chi
\in\mathcal{C}_{0}^{\infty}(\mathbb{R}^{2})$ satisfies $0\leq\chi(z)\leq1$,
$\chi(z)=1$ for $\left\vert z\right\vert \leq1$, and $\chi(z)=0$ for
$\left\vert z\right\vert \geq2$. Drop the unimodular factor $\overline{k}/k$
and write $I_{21}=I_{21}^{\mathrm{in}}+I_{21}^{\mathrm{out}}$ corresponding to
this decomposition. Since $\chi\partial^{2}u$ $\in L^{p^{\prime}}$ for any
$p>2$, we may expand%
\[
I_{21}^{\mathrm{in}}=\sum_{j=1}^{N}\int e_{k}\left(  \partial^{2}u\right)
\chi\left(  \overline{T^{2j}1}\right)  +\int e_{k}\left(  \partial
^{2}u\right)  \chi\overline{\left(  I-T^{2}\right)  ^{-1}T^{2j+2}1}%
\]
By Lemma \ref{lemma:1}, Lemma \ref{lemma:2} and the fact that $\chi
\partial^{2}u\in L^{p^{\prime}}$, each right-hand term defines a LLCM from
$H^{2,1}$ to $L_{k}^{2}$, hence $u\mapsto I_{21}^{\mathrm{rm}}$ is a LLCM. In
$I_{21}^{\mathrm{out}}$,\ we use Lemma \ref{lemma:3} to write%
\begin{align}
\int e_{k}\left(  1-\chi\right)  \partial^{2}u\left(  \overline{\mu_{1}%
}-1\right)   &  =-\frac{1}{2\pi}\left(  \int e_{k}\left(  1-\chi\right)
\left(  \partial^{2}u\right)  z^{-1}\right)  \left(  \int e_{-k}\overline
{u}\mu_{2}\right)  \label{eq:I21.1}\\
&  +\frac{1}{2}\left\langle e_{-k}\left(  1-\chi\right)  \overline{\left(
\partial^{2}u\right)  z^{-1}},Pe_{-k}u_{1}\left(  T\mu_{1}\right)
\right\rangle .\nonumber
\end{align}
The first right-hand term in (\ref{eq:I21.1}) is the product of the Fourier
transform of the $L^{2}$-function $\left(  1-\chi(z)\right)  (\partial
^{2}u)(z)z^{-1}$ and the function $\int e_{-k}\overline{u}\mu_{2}$. Since $u$
$\in L^{p^{\prime}}$ for all $p>2$ while $u\mapsto\mu_{2}$ is a LLCM\ from
$H^{1,1}$ to $L_{k}^{\infty}\left(  L_{z}^{p}\right)  $, the map $u\mapsto\int
e_{-k}\overline{u}\mu_{2}$ is a LLCM from $H^{2,1}$ to $L_{k}^{\infty}$, so
the first right-hand term in (\ref{eq:I21.1}) defines a LLCM\ from $H^{2,1}$
to $L_{k}^{2}$. The second right-hand term in (\ref{eq:I21.1}) may be
controlled using Lemmas \ref{lemma:1} and \ref{lemma:2}. This shows that
$u\mapsto I_{21}^{\mathrm{out}}$, and hence $u\mapsto I_{21}$, defines a LLCM
from $H^{2,1}$ to $L_{k}^{2}$. Finally, to control $I_{22}$, we note that
$\overline{u}\partial u$ $\in L^{p^{\prime}}$ for $p>2$. Hence, using Lemma
\ref{lemma:1} we obtain%
\begin{equation}
I_{22}=\sum_{j=0}^{N}\int\overline{u}\partial u~T^{2j+1}1+\int\left(
\overline{u}\partial u\right)  ~\left(  I-T^{2}\right)  T^{2j+1}%
1.\label{eq:I22}%
\end{equation}
To control terms in the finite sum in (\ref{eq:I22}), we write%
\begin{align*}
\int\overline{u}\partial u~T^{2j+1}1 &  =\left\langle u\partial\overline
{u},P\left[  e_{k}u\left(  \overline{T^{2j}1}\right)  \right]  \right\rangle
\\
&  =-\left\langle e_{-k}\overline{u}\overline{P}\left(  u\partial\overline
{u}\right)  ,\overline{T^{2j}1}\right\rangle .
\end{align*}
and apply Lemma 2 since $\left\Vert u\overline{P}\left(  u\partial\overline
{u}\right)  \right\Vert _{2}^{^{\prime}}\leq C\left\Vert u\right\Vert
_{H^{2,1}}$. The second right-hand term in (\ref{eq:I22}) defines a LLCM from
$H^{2,1}$ to $L_{k}^{2}$ by Lemma \ref{lemma:1}. Hence, $u\mapsto I_{2}$ is a
LLCM from $H^{2,1}$ to $L_{k}^{2}$.

$I_{3}$: Note that $\left\vert u\right\vert ^{2}\in L^{p^{\prime}}$ for all
$p>2$ and use the expansion of $\mu_{2}$ to write $I_{3}$ as
\[
\sum_{j=1}^{N}-\frac{\overline{k}}{2}\int\left\vert u\right\vert ^{2}%
T^{2j+1}1-\frac{\overline{k}}{2}\int\left\vert u\right\vert ^{2}\left(
I-T^{2}\right)  ^{-1}T^{2N+3}1.
\]
The remainder is a LLCM\ from $H^{2,1}$ to $L_{k}^{2}$ by Lemma \ref{lemma:1}.
A given term in the finite sum is written (up to constant factors)%
\begin{align}
\overline{k}\left\langle \left\vert u\right\vert ^{2},P\left[  e_{k}u\left(
\overline{T^{2j}1}\right)  \right]  \right\rangle  &  =\overline
{k}\left\langle e_{-k}\overline{u}\overline{P}\left(  \left\vert u\right\vert
^{2}\right)  ,\overline{T^{2j}1}\right\rangle \label{eq:nuke.the.kbar}\\
&  =-\left\langle \overline{\partial}\left(  e_{-k}\overline{u}\overline
{P}\left(  \left\vert u\right\vert ^{2}\right)  \right)  ,\overline{T^{2j}%
1}\right\rangle \nonumber\\
&  +\left\langle e_{-k}\overline{\partial}\left(  \overline{u}\overline
{P}\left(  \left\vert u\right\vert ^{2}\right)  \right)  ,\overline{T^{2j}%
1}\right\rangle .\nonumber
\end{align}
where we integrated by parts to remove the factor of $\overline{k}$. The first
right-hand term in the second line of (\ref{eq:nuke.the.kbar}) is
\begin{align*}
\left\langle e_{-k}\overline{u}\overline{P}\left(  \left\vert u\right\vert
^{2}\right)  ,\partial\left(  \overline{T^{2j}1}\right)  \right\rangle  &
=\left\langle e_{-k}\overline{u}\overline{P}\left(  \left\vert u\right\vert
^{2}\right)  ,e_{-k}\overline{u}P\left(  e_{k}u\overline{T^{2j-2}1}\right)
\right\rangle \\
&  =\left\langle e_{-k}\overline{u}P\left(  \left\vert u\right\vert
^{2}P\left(  \left\vert u\right\vert ^{2}\right)  \right)  ,\overline
{T^{2j-2}1}\right\rangle
\end{align*}
which defines a LLCM from $H^{2,1}$ to $L_{k}^{2}$ by Lemma \ref{lemma:2}
since $\overline{u}P\left(  \left\vert u\right\vert ^{2}P\left(  \left\vert
u\right\vert ^{2}\right)  \right)  \in L^{2}$. The second right-hand term is
treated similarly. Hence $u\mapsto I_{3}$ is a LLCM\ from $H^{2,1}$ to
$L_{k}^{2}$.

Collecting these results, we conclude that $u\mapsto\left\vert \diamond
\right\vert ^{2}r\left(  \diamond\right)  $ is a LLCM from $H^{2,1}$ to
$L_{k}^{2}$.
\end{proof}

\begin{lemma}
\label{lemma:restrict.I}The map $r\mapsto\Delta u$ is a LLCM\ from
$H^{2,1}(\mathbb{R}^{2})$ to $L^{2}(\mathbb{R}^{2})$.
\end{lemma}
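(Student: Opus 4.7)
The proof is dual to that of Lemma \ref{lemma:restrict.R}, with the variables $z$ and $k$, the functions $u$ and $r$, and the CGO solutions $(\mu_1,\mu_2)$ and $(\nu_1,\nu_2)$ interchanged. All computations are first performed for $r\in\mathcal{C}_0^\infty(\mathbb{R}^2)$ and extended by density, using that $\|r\|_p\leq C_p\|r\|_{H^{1,2}}$ for $p\in(1,\infty)$ and $\|\nabla_k r\|_p\leq C_p\|r\|_{H^{1,2}}$ for $p\in[2,\infty)$.

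Starting from the representation (\ref{eq:u.rep}), I compute $\Delta u=4\partial\overline{\partial}_z u$ by differentiating twice under the integral. Using $\partial_z\overline{\partial}_z e_{-k}=-|k|^2 e_{-k}$, together with the identity $\overline{\partial}_z\nu_1=\tfrac{1}{2}e_k u\overline{\mu_2}$ (which holds because $\nu_1=\mu_1$) and its $\partial_z$-consequences, Leibniz expansion yields a decomposition
\[
\Delta u = J_1 + J_2 + J_3
\]
that is the verbatim analogue of the decomposition $|k|^2 r=I_1+I_2+I_3$ in the previous lemma. Here $J_1$ is, up to constants, the inverse Fourier transform of $|k|^2 r(k)$; $J_2$ is the correction $\frac{4}{\pi}\int|k|^2 e_{-k}\,r\,(\nu_1-1)\,dA(k)$ combined with a term containing $\partial_z\nu_1$; and $J_3$ gathers multilinear error terms in $r$, $u$, $\mu_2$, $\partial u$, and $\partial\overline{\mu_2}$ produced when $\partial$ and $\overline{\partial}_z$ hit the $\nu_1$-factor.

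The term $J_1$ defines a bounded linear map from $H^{1,2}$ to $L^2_z$ since $|k|^2 r\in L^2_k$. The correction piece of $J_2$ is controlled by inserting the finite $L^p$-expansion of $\nu_1-1$ from Lemma \ref{lemma:1}, applied in the $k$-variable to the $\overline{\partial}_k$-problem (\ref{eq:nu.dbar}), and then invoking Lemma \ref{lemma:2} on the resulting multilinear forms, with the factor $|k|^2 r(k)$ absorbed as a single $L^2_k$-function. The $\partial_z\nu_1$ piece is rewritten via the Beurling identity $\partial_z(\nu_1-1)=\mathcal{S}\,\overline{\partial}_z(\nu_1-1)=\tfrac{1}{2}\mathcal{S}[e_k u\overline{\mu_2}]$, valid because $\mathcal{S}$ is bounded on every $L^p$ with $p\in(1,\infty)$; this converts the unknown derivative into a multilinear expression amenable to Proposition \ref{prop:brown}. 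The remaining terms in $J_3$ are handled exactly as $I_{22}$ and $I_3$ in the proof of Lemma \ref{lemma:restrict.R}: a cutoff $\chi\in\mathcal{C}_0^\infty$ separates an \emph{inside} region, where the finite $L^p$-expansion of Lemma \ref{lemma:1} applies directly, from an \emph{outside} region, where Lemma \ref{lemma:3} (with the roles of $z$ and $k$ reversed) supplies sufficient decay.

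The principal obstacle is the $\partial_z\nu_1$ contribution to $J_2$, since the $\overline{\partial}_k$-system for $(\nu_1,\nu_2)$ gives no direct description of $\partial_z$-derivatives. The Beurling-transform rewriting above reduces that piece to a multilinear object governed by the Brascamp--Lieb estimate of Proposition \ref{prop:brown}; once this reduction is made, the analysis follows the template of Lemma \ref{lemma:restrict.R} with only notational modifications.
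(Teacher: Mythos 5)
Your proposal is correct and follows essentially the same route as the paper: differentiate the representation formula (\ref{eq:u.rep}) under the integral, isolate the Fourier transform of $\left\vert k\right\vert ^{2}r$ as the leading term, convert $z$-derivatives of $\nu_{1}$ via $\overline{\partial}_{z}\nu_{1}=\frac{1}{2}e_{k}u\overline{\mu_{2}}$ and the Beurling transform $\partial\overline{\partial}^{-1}$, and control the remaining pieces with the finite expansions, the multilinear (Brascamp--Lieb) estimates, and the cutoff plus large-$k$ expansion. The only differences are cosmetic (your grouping into three terms versus the paper's five), so no further comment is needed.
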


\begin{proof}
Since $r\in H^{1,2}$ we have $kr(k)\in L^{p}$ for all $p\in(1,2]$, $r\in
L^{p}$ for all $p\in\lbrack1,\infty)$ and $\partial r\in L^{p}$ for all
$p\in\lbrack2,\infty)$. A straightforward computation shows that%
\begin{align*}
\partial\overline{\partial}u &  =\int\left\vert k\right\vert ^{2}e_{-k}%
r+\int\left\vert k\right\vert ^{2}e_{-k}r\left(  \nu_{1}-1\right)
-\int\overline{k}e_{-k}r\partial\nu_{1}\\
&  +\int ke_{-k}r\overline{\partial}\nu_{1}+\int e_{-k}r\partial
\overline{\partial}\nu_{1}\\
&  =I_{1}+I_{2}+I_{3}+I_{4}+I_{5}%
\end{align*}
where all derivatives are taken with respect to $z$. We now show that each of
$I_{1}$--$I_{5}$ defines a locally Lipschitz continuous map from $H^{2,1}\ni
r$ into $L_{z}^{2}$.

$I_{1}$: This term is the Fourier transform of $\partial\overline{\partial}r$
and hence $L^{2}$.

$I_{2}$: Inserting $1=\chi+\left(  1-\chi\right)  $ in $I_{2}$, where $\chi$
is as in the proof of Lemma \ref{lemma:restrict.R} (except that, here, $\chi$
is a function of $k$, not $z$), we have $I_{2}=I_{21}+I_{22}$ where%
\[
I_{21}=\int e_{-k}\left\vert k\right\vert ^{2}\chi r\left(  \nu_{1}-1\right)
,~~~I_{22}=\int e_{-k}\left\vert k\right\vert ^{2}r\left(  1-\chi\right)
\left(  \nu_{1}-1\right)  .
\]
We will show that $I_{21}$ and $I_{22}$ are both LLCM's from $H^{1,2}$ to
$L_{z}^{2}$. Since $\left\vert k\right\vert ^{2}\chi r\in L^{p^{\prime}}$ for
any $p>2$, we can use Lemma \ref{lemma:1} for $\nu_{1}-1$ together with Lemma
\ref{lemma:2} to conclude that $r\mapsto I_{21}$ is a LLCM from $H^{1,2}$ to
$L_{z}^{2}$. For $I_{22}$ we use the one-step large-$k$ expansion of $\nu
_{1}-1$ (Lemma \ref{lemma:3}):%
\begin{align*}
\nu_{1}(z,k)-1 &  =-\frac{1}{2\pi k}\int e_{k^{\prime}}(z)\overline
{r(k^{\prime})}~\overline{\nu_{2}(z,k^{\prime})}~dm(k^{\prime})\\
&  -\frac{1}{2\pi k}\int\frac{e_{k^{\prime}}(z)}{k-k^{\prime}}k^{\prime
}r\overline{(k^{\prime})}~\overline{\nu_{2}(z,k^{\prime})}~dm(k^{\prime}).
\end{align*}
We then have
\[
I_{22}=\int e_{-k}\overline{k}r\left(  1-\chi\right)  \left(  F_{1}%
+F_{2}\right)
\]
where%
\begin{align*}
F_{1}(z) &  =-\frac{1}{2\pi}\int e_{k^{\prime}}\overline{r(k^{\prime}%
)}~\overline{\nu_{2}(z,k^{\prime})}~dm(k^{\prime}),~~\\
F_{2}(z,k) &  =-\frac{1}{2\pi}\int\frac{e_{k^{\prime}}(z)}{k-k^{\prime}%
}k^{\prime}r\overline{(k^{\prime})}~\overline{\nu_{2}(z,k^{\prime}%
)}~dm(k^{\prime}).
\end{align*}
It is easy to see that $\left\Vert F_{1}\right\Vert _{L_{z}^{\infty}}%
\leq\left\Vert r\right\Vert _{1}\left\Vert \nu_{2}\right\Vert _{\infty}$, so
that $r\mapsto F_{1}$ is a LLCM\ from $H^{1,2}$ to $L_{z}^{\infty}$. Moreover,
$\int e_{-k}\overline{k}r\left(  1-\chi\right)  $ is the inverse Fourier
transform of the $L^{2}$ function $\overline{\left(  \diamond\right)
}r(\diamond)\left(  1-\chi(\diamond)\right)  $. Hence, $r\mapsto\int
e_{-k}\overline{k}r\left(  1-\chi\right)  F_{1}$ is a LLCM from $H^{1,2}$ to
$L_{z}^{2}$. Next, we may use Lemma \ref{lemma:1} in $F_{2}$ to conclude that%
\begin{equation}
F_{2}=-\frac{1}{2}\sum_{j=1}^{N}P_{k}\left(  e_{k}~k\overline{r}%
~\overline{T^{2j+1}1}\right)  -\frac{1}{2}P_{k}\left(  e_{k}~k\overline
{r}~\overline{\left(  I-T^{2}\right)  ^{-1}T^{2N+3}1}\right)  ,\label{eq:F2}%
\end{equation}
The corresponding contributions to $I_{22}$ from terms in the finite sum from
(\ref{eq:F2}) define LLCM's from $H^{1,2}$ to $L_{z}^{2}$ by Lemma
\ref{lemma:2}, while by the remainder estimate in Lemma \ref{lemma:1}, the
mapping $r\mapsto$ $Pe_{k}~k\overline{r}~\left(  I-T^{2}\right)  ^{-1}%
T^{2N+3}1$ is a LLCM\ from $H^{1,2}$ to $L_{z}^{2}\left(  L_{k}^{p}\right)  $
for $p>2$. Using these estimates we may conclude that $r\mapsto\int
e_{-k}\overline{k}r\left(  1-\chi\right)  F_{2}$ is a LLCM\ from $H^{1,2}$ to
$L_{z}^{2}$.

$I_{3}$: Since $\mu_{1}=\nu_{1}$, we conclude from (\ref{eq:mu1.dbar}) and
(\ref{eq:nu.def}) that
\begin{equation}
\overline{\partial}_{z}\nu_{1}=\frac{1}{2}e_{k}u\overline{\mu_{2}}=\frac{1}%
{2}u\nu_{2} \label{eq:nu1.dbarz}%
\end{equation}
so that%
\begin{align*}
I_{3}  &  =-\int\overline{k}e_{-k}r\left(  \partial\overline{\partial}%
^{-1}\right)  \left(  \overline{\partial}\nu_{1}\right) \\
&  =-\frac{1}{2}\int\overline{k}e_{-k}r\left(  \partial\overline{\partial
}^{-1}\right)  \left(  u\nu_{2}\right)  .
\end{align*}
Porceeding as in the analysis of $I_{22}$ in Lemma \ref{lemma:restrict.R}, we
use the one-step large-$k$ expansion (Lemma \ref{lemma:3}) to obtain%
\begin{align*}
\nu_{2}(z,k)  &  =-\frac{1}{2\pi k}\int e_{k^{\prime}}(z)\overline
{r(k^{\prime})}~\overline{\nu_{2}(z,k^{\prime})}~dm(k^{\prime})\\
&  -\frac{1}{2\pi k}\int\frac{e_{k^{\prime}}(z)}{k-k^{\prime}}k^{\prime
}r\overline{(k^{\prime})}~\overline{\nu_{2}(z,k^{\prime})}~dm(k^{\prime})\\
&  =F_{1}+F_{2}.
\end{align*}
Hence, up to trivial factors,%
\[
I_{3}=\int e_{-k}r\left(  \partial\overline{\partial}^{-1}\right)  \left[
u\left(  F_{1}+F_{2}\right)  \right]  .
\]
By Minkowski's inequality,%
\[
\left\Vert I_{3}\right\Vert _{L_{z}^{2}}\leq\frac{1}{2}\int\left\vert
r\right\vert \left\Vert \partial\overline{\partial}^{-1}\left(  u\left(
F_{1}+F_{2}\right)  \right)  \right\Vert _{L_{z}^{2}}%
\]
Observe that $\left\Vert \partial\overline{\partial}^{-1}\left(
uF_{1}\right)  \right\Vert _{L_{z}^{2}}\leq C\left\Vert uF_{1}\right\Vert
_{L_{z}^{2}}$ while
\begin{align*}
\left\Vert \partial\overline{\partial}^{-1}\left(  uF_{2}\right)  \right\Vert
_{L_{k}^{p}\left(  L_{z}^{2}\right)  }  &  \leq C_{p}\left\Vert u\right\Vert
_{2}\left\Vert F_{2}\right\Vert _{L_{k}^{p}\left(  L_{z}^{\infty}\right)  }\\
&  \leq C_{p}\left\Vert u\right\Vert _{2}\left\Vert \left(  \diamond\right)
r(\diamond)\right\Vert _{2p/(p+2)}\left\Vert \nu_{2}\right\Vert _{\infty}%
\end{align*}
(where $\left\Vert \nu_{2}\right\Vert _{\infty}$ means $\left\Vert
v_{2}\right\Vert _{L^{\infty}(\mathbb{R}_{z}^{2}\times\mathbb{R}_{k}^{2})})$,
so that altogether
\[
\left\Vert I_{3}\right\Vert _{L_{z}^{2}}\leq C\left\Vert u\right\Vert
_{2}\left\Vert r\right\Vert _{H^{1,2}}\left(  1+\left\Vert \nu_{2}\right\Vert
_{\infty}\right)  .
\]
Thus $I_{3}\in L_{z}^{2}$. Local Lipschitz continuity of $I_{3}$ follows from
the local Lipschitz continuity of $r\mapsto u$ and $r\mapsto\nu_{2}$.

$I_{4}$: Using (\ref{eq:nu1.dbarz}) again we compute%
\[
\int ke_{-k}r\overline{\partial}\nu_{1}=\frac{u}{2}\int e_{-k}kr\nu_{2}%
\]
so it suffices to show that $r\mapsto\int e_{-k}kr\nu_{2}$ is a LLCM from
$H^{1,2}$ to $L_{z}^{\infty}$. Since $kr\in L^{p^{\prime}}$ for $p>2$, and
$r\mapsto\nu_{2}$ is a LLCM from $H^{1,1}$ to $L^{\infty}$, the result follows.

$I_{5}$: Compute%
\begin{equation}
I_{5}=\int e_{-k}r\partial\left(  u\nu_{2}\right)  =\partial u\int e_{-k}%
r\nu_{2}+u\int e_{-k}r\left(  \partial\nu_{2}\right)  . \label{eq:I5}%
\end{equation}
The first right-hand term in (\ref{eq:I5}) defines a LLCM from $H^{1,2}$ to
$L_{z}^{2}$ since $r\mapsto\partial u$ has this property. Thus, to control the
right hand term, it suffices to show that $r\mapsto\int e_{-k}r\nu_{2}$
defines a LLCM from $H^{1,2}$ to $L_{z}^{\infty}$. To see this, note that
$r\in L^{p^{\prime}}$ for $p>2$, and $r\mapsto\nu_{2}$ is a LLCM from
$H^{1,1}$ to $L_{z}^{\infty}\left(  L_{z}^{p}\right)  $. To control the second
right-hand term in (\ref{eq:I5}), recall that $\nu_{2}=e_{k}\overline{\mu_{2}%
}$ so that the second term is written%
\begin{equation}
-u\int kre_{k}\overline{\nu_{2}}+\frac{\left\vert u\right\vert ^{2}}{2}\int
e_{-k}r\nu_{1}. \label{eq:I5.2}%
\end{equation}
Since $u$ and $\left\vert u\right\vert ^{2}$ belong to $L^{2}$ it is enough to
show that the two integrals in (\ref{eq:I5.2}) define LLCM's from $r\in
H^{2,1}$ to $L_{z}^{\infty}$. Since $kr\in L^{p^{\prime}}$ for $p>2$ and
$\nu_{2}$ is a LLCM\ from $H^{1,2}$ to $L_{z}^{\infty}\left(  L_{k}%
^{p}\right)  $, the first term in (\ref{eq:I5.2}) clearly has this property.
\ Since $r\in L^{1}$ and $\nu_{1}$ is a LLCM from $r\in H^{2,1}$ to
$L_{z}^{\infty}(L_{k}^{\infty})$, we conclude that the second term also has
this property.
\end{proof}

\section{Solving the mNV\ Equation}

\label{sec:mNV}

In this section we prove Theorem \ref{thm:mNV}. \ Recall that the modified
Novikov-Veselov (mNV) equation \cite{Bogdanov:1987} is:%
\begin{equation}
u_{t}+\left(  \partial^{3}+\overline{\partial}^{3}\right)
u+NL(u)=0\label{eq:mNV}%
\end{equation}
where%
\begin{align*}
NL(u) &  =\frac{3}{4}\left(  \partial\overline{u}\right)  \cdot\left(
\overline{\partial}\partial^{-1}\left(  \left\vert u\right\vert ^{2}\right)
\right)  +\frac{3}{4}\left(  \overline{\partial}u\right)  \cdot\left(
\overline{\partial}\partial^{-1}\left(  \left\vert u\right\vert ^{2}\right)
\right)  \\
&  +\frac{3}{4}\overline{u}\overline{\partial}\partial^{-1}\left(
\overline{u}\overline{\partial}u\right)  +\frac{3}{4}u\partial^{-1}\left(
\overline{\partial}\left(  \overline{u}\overline{\partial}u\right)  \right)  .
\end{align*}
By Theorem \ref{thm:mNV.Schwartz}, for $u_{0}\in\mathcal{S}(\mathbb{R}^{2})$,
the formula%
\begin{equation}
u(z,t)=\mathcal{I}\left(  \exp\left(  \left(  \overline{\diamond}^{3}%
-\diamond^{3}\right)  t\right)  \mathcal{R}u_{0}(\diamond)\right)
(z)\label{eq:mNV.sol.bis}%
\end{equation}
gives a classical solution of the mNV equation. By Lipschitz continuity of
$u_{0}\rightarrow u_{0}(t)$, this formula extends to $u_{0}\in H^{2,1}$, and
exhibits the solution as a continuous curve in $H^{2,1}$ that depends
continuously on the initial data. Since any $u$ given by (\ref{eq:mNV.sol.bis}%
) and $u_{0}\in\mathcal{S}(\mathbb{R}^{2})$ is a classical solution, such a
$u$ trivially satisfies (\ref{eq:mNV.weak}). The same fact for $u(t)$ with
$u_{0}\in H^{2,1}$ follows from the density of $\mathcal{S}(\mathbb{R}^{2})$
in $H^{2,1}$, the continuity of the map (\ref{eq:mNV.sol.bis}) in $u_{0}$, and
an easy approximation argument.

It remains to show that if $u_{0}\in H^{2,1}(\mathbb{R}^{2})\cap
L^{1}(\mathbb{R}^{2})$ and if, also,
\begin{equation}
\int u_{0}dA(z)=0,~~~~\partial u_{0}=\overline{\partial u_{0}}%
,\label{eq:mNV.IC}%
\end{equation}
then $\partial u=\overline{\partial u}$ for all $t$. We will show that this
holds for initial data $u_{0}\in S(\mathbb{R}^{2})$ with the stated
properties, and use Lipschitz continuity of the map $u_{0}\rightarrow u(t)$
defined by (\ref{eq:mNV.sol.bis}) to extend to all $u_{0}\in H^{2,1}%
(\mathbb{R}^{2})\cap L^{1}(\mathbb{R}^{2})$ so that the conditions
(\ref{eq:mNV.IC}) hold.

It will be useful to consider the function%
\[
\varphi=\overline{\partial}^{-1}u
\]
which solves the Cauchy problem%
\begin{align}
\varphi_{t}  &  =-\partial^{3}\varphi-\overline{\partial}^{3}\varphi
\label{eq:mNV.pot}\\
&  -\frac{1}{4}\left(  \partial\varphi\right)  ^{3}-\frac{1}{4}\left(
\overline{\partial}\varphi\right)  ^{3}\nonumber\\
&  +\frac{3}{4}\partial\varphi\cdot\overline{\partial}^{-1}\partial\left(
\left\vert \partial\varphi\right\vert ^{2}\right)  +\frac{3}{4}\overline
{\partial}\varphi\cdot\overline{\partial}^{-1}\partial\left(  \left\vert
\partial\varphi\right\vert ^{2}\right) \nonumber\\
\left.  \varphi\right\vert _{t=0}  &  =\varphi_{0}\nonumber
\end{align}
Note that the condition $\partial u_{0}=\overline{\partial u_{0}}$ implies
that $\varphi_{0}$ is real. On the other hand, to show that $\partial
u=\overline{\partial u}$, it suffices to show that $\varphi$ is real for
$t>0$. To this end, we consider the function%
\[
w=\varphi-\overline{\varphi}%
\]
and derive a linear Cauchy problem satisfied by $w$. \ We will need to know
that $w$ is $L^{2}$ in the space variables.

\begin{lemma}
\label{lemma:mNV.asy}Suppose that $u_{0}\in\mathcal{S}(\mathbb{R}^{2})$, that
$u(t)$ solves the mNV\ equation, and $\varphi(z,t)=\left(  \overline{\partial
}^{-1}u\right)  (t)$. Then for each $t$,
\[
\varphi(z,t)=\frac{c_{0}}{z}+\mathcal{O}\left(  \left\vert z\right\vert
^{-2}\right)
\]
where $c_{0}=\int u_{0}(z)~dA(z)$. If $c_{0}=0$, then $\varphi\in
L^{2}(\mathbb{R}^{2})$ for $t>0$.
\end{lemma}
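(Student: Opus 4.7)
The plan is to combine a classical Cauchy-transform expansion for $Pu$ with an asymptotic analysis of the potential equation (\ref{eq:mNV.pot}) satisfied by $\varphi$.

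First, by Theorem \ref{thm:mNV.Schwartz} (proved in Appendix \ref{app:mNV}), Schwartz initial data remain Schwartz under the flow, so $u(\cdot,t)\in\mathcal{S}(\mathbb{R}^{2})$ for all $t\ge 0$. Then $\varphi(z,t)=(Pu)(z,t)$ is smooth in $z$, and expanding the Cauchy kernel $1/(\zeta-z)$ geometrically for $|z|$ large yields
\[
\varphi(z,t)=\frac{c(t)}{z}+\mathcal{O}(|z|^{-2}),
\]
where $c(t)$ is a fixed constant multiple of $\int u(\zeta,t)\,dA(\zeta)$, with error controlled uniformly on any compact interval $[0,T]$. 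Everything then reduces to showing that $c(t)$ is independent of $t$.

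For the conservation I would work with (\ref{eq:mNV.pot}) directly and show that every term on its right-hand side decays like $\mathcal{O}(|z|^{-2})$ at infinity, uniformly in $t\in[0,T]$. The local terms are straightforward: since $\varphi\sim c/z$ gives $\partial^{n}\varphi\sim 1/z^{n+1}$, one has $\partial^{3}\varphi=\mathcal{O}(|z|^{-4})$; while $\overline{\partial}^{n}\varphi=\overline{\partial}^{n-1}u$ for $n\ge 1$ is Schwartz; the cubic terms $(\partial\varphi)^{3}$ and $(\overline{\partial}\varphi)^{3}$ are $\mathcal{O}(|z|^{-6})$ and Schwartz, respectively. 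The subtle piece is the nonlocal factor $\overline{\partial}^{-1}\partial(|\partial\varphi|^{2})$. Setting $g:=|\partial\varphi|^{2}=\mathcal{O}(|z|^{-4})$, Green's theorem gives $\int_{\mathbb{R}^{2}}\partial g\,dA=0$, so the Cauchy expansion applied to $\partial g$ (integrable and with zero mean) has vanishing leading $1/z$-coefficient, whence $\overline{\partial}^{-1}\partial g=\mathcal{O}(|z|^{-2})$. Multiplying by $\partial\varphi\sim 1/z^{2}$ or by the Schwartz function $\overline{\partial}\varphi=u$ produces the required decay.

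Combining these estimates, $\varphi_{t}(z,t)=\mathcal{O}(|z|^{-2})$ uniformly on $[0,T]$. Applying Green's theorem to $\overline{\partial}\varphi_{t}=u_{t}$ then gives
\[
\frac{d}{dt}\int u(\cdot,t)\,dA(z)=\lim_{R\to\infty}\frac{1}{2i}\oint_{|z|=R}\varphi_{t}(z,t)\,dz=0,
\]
since the boundary integral is bounded by $2\pi R\cdot C(T)/R^{2}\to 0$. Hence $\int u(\cdot,t)\,dA$ is preserved and $c(t)=c_{0}$. The second claim is then immediate: when $c_{0}=0$ the leading term vanishes and $\varphi(z,t)=\mathcal{O}(|z|^{-2})$ at infinity; combined with smoothness on bounded sets (inherited from Schwartz regularity of $u$), this places $\varphi(\cdot,t)$ in $L^{2}(\mathbb{R}^{2})$.

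I expect the main obstacle to be the quantitative $\mathcal{O}(|z|^{-2})$ bound on $\overline{\partial}^{-1}\partial(|\partial\varphi|^{2})$ with constants uniform in $t$: this requires controlling the Cauchy-expansion error for $\partial g$ using both its pointwise decay and its zero mean, with bounds propagated through the flow. The $L^{p}$-estimates of Lemma~\ref{lemma:P} together with the Schwartz control on $u(\cdot,t)$ furnished by Theorem~\ref{thm:mNV.Schwartz} are the natural tools.
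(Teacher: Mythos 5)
Your proposal is correct and follows essentially the same route as the paper: Schwartz-class persistence of $u(t)$ under the flow, the Cauchy-transform expansion $\varphi=c(t)/z+\mathcal{O}(|z|^{-2})$, and substitution into (\ref{eq:mNV.pot}) to see that the $1/z$ coefficient is time-independent. The paper compresses the conservation step into ``we easily conclude that $c_{0}'(t)=0$''; your term-by-term verification that the right-hand side of (\ref{eq:mNV.pot}) is $\mathcal{O}(|z|^{-2})$ (including the zero-mean argument for $\overline{\partial}^{-1}\partial(|\partial\varphi|^{2})$) and the contour-integral extraction of $\tfrac{d}{dt}\int u\,dA$ are exactly the details that argument leaves implicit.
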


\begin{proof}
To see that $\varphi$ has the stated form if $u_{0}\in\mathcal{S}%
(\mathbb{R}^{2})$, we note that $u(t)\in\mathcal{S}(\mathbb{R}^{2})$ by the
mapping properties of the scattering transform so that%
\[
\varphi(z,t)=-\frac{1}{\pi z}\int u(z,t)~dt+\mathcal{O}_{t}\left(  \left\vert
z\right\vert ^{-2}\right)
\]
differentiably in $z,t$. Let $c_{0}(t)=\int u(z,t)~dA(z)$. Substituting in
(\ref{eq:mNV.pot}) we easily conclude that $c_{0}^{\prime}(t)=0$. It now
follows that $\varphi(\diamond,t)\in L^{2}(\mathbb{R}^{2})$ as claimed.
\end{proof}

Next, we derive a linear Cauchy problem obeyed by $w$ and derive weighted
estimates on $w$ to show that, if $\left.  w\right\vert _{t=0}=0$, then
$w(t)=0$ identically. It follows that $\varphi$ is real, and hence $\partial
u=\overline{\partial u}$ for all $t>0$. Using (\ref{eq:mNV.pot}) and its
complex conjugate, we easily see that%
\begin{equation}
w_{t}=-\partial^{3}w-\overline{\partial}^{3}w+A\partial w+\overline
{A}\overline{\partial}w \label{eq:w.ev}%
\end{equation}
where%
\[
A=\frac{1}{4}\left[  \left(  \partial\varphi\right)  ^{2}+\left(
\partial\varphi\right)  \cdot\left(  \partial\overline{\varphi}\right)
+\left(  \partial\overline{\varphi}\right)  ^{2}\right]  +\frac{3}{4}%
\overline{\partial}^{-1}\partial\left(  \left\vert \partial\varphi\right\vert
^{2}\right)
\]
Note that $\partial\varphi$, $\overline{\partial}\varphi$ belong to $L^{p}$
for all $p\in(1,\infty)$, uniformly locally in $t$, and that $A$ is smooth
provided that $u_{0}\in\mathcal{S}(\mathbb{R}^{2})$. We will prove:

\begin{lemma}
\label{lemma:weight}Suppose that $A(z,t)$ is a bounded smooth function on
$\mathbb{R}^{2}\times\left(  0,T\right)  $ and that $\eta(z,t)$ is a bounded
smooth nonnegative function with $\left\vert A(z,t)\right\vert \leq\eta(z,t)$
for $z\in\mathbb{C}$ and $t\in\left[  0,T\right]  $. Let $w$ be a smooth
solution of (\ref{eq:w.ev}) with $w(\diamond,t)\in L^{2}(\mathbb{R}^{2})$ for
each $t>0$. Then, there is a constant $C$ so that%
\[
\sup_{t\in\left[  0,T\right]  }\left\Vert w(t)\right\Vert \leq e^{CT}%
\left\Vert w(0)\right\Vert .
\]

\end{lemma}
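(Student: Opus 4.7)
My plan is a classical $L^2$ energy estimate. The crucial observation is that the principal part of \eqref{eq:w.ev} is formally skew-adjoint on $L^2(\mathbb{R}^2)$, so only the lower-order perturbation contributes to $\tfrac{d}{dt}\|w\|_2^2$, and that contribution is controlled by pointwise bounds on the first derivatives of $A$, leading directly to a Gronwall estimate.

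First, I would verify that $\partial^3+\overline{\partial}^3$ is formally skew-adjoint under the standard $L^2$ pairing $\langle f,g\rangle=\int f\overline{g}\,dA$. Since $\partial^{*}=-\overline{\partial}$ and $\overline{\partial}^{\,*}=-\partial$, one has $(\partial^3+\overline{\partial}^3)^{*}=-(\partial^3+\overline{\partial}^3)$, so for admissible $w$,
\[
2\operatorname{Re}\int\overline{w}\bigl(-\partial^3w-\overline{\partial}^3w\bigr)\,dA=0.
\]
Next, I would compute $\tfrac{d}{dt}\|w(\cdot,t)\|_2^2=\int(w_t\overline{w}+w\,\overline{w_t})\,dA$. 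After the skew-adjointness cancellation, only the first-order terms survive, and grouping them gives the pointwise identities
\[
\overline{w}\,A\partial w+w\,A\partial\overline{w}=A\,\partial(|w|^2),\qquad \overline{w}\,\overline{A}\,\overline{\partial}w+w\,\overline{A}\,\overline{\partial}\,\overline{w}=\overline{A}\,\overline{\partial}(|w|^2).
\]
Integrating and applying integration by parts, together with $\overline{\partial}\,\overline{A}=\overline{\partial A}$, yields
\[
\frac{d}{dt}\|w\|_2^2=-\int\bigl(\partial A+\overline{\partial}\,\overline{A}\bigr)|w|^2\,dA=-2\operatorname{Re}\int(\partial A)\,|w|^2\,dA.
\]
With $C_0=\|\partial A\|_\infty$ (finite by the smoothness and boundedness hypothesis on $A$, and controlled in applications by $\eta$ and its first derivatives), this gives $\tfrac{d}{dt}\|w\|_2^2\le 2C_0\|w\|_2^2$, and Gronwall's inequality delivers $\|w(\cdot,t)\|_2\le e^{C_0 t}\|w(\cdot,0)\|_2$ for $0\le t\le T$.

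The main technical obstacle is justifying the integrations by parts rigorously, since the hypothesis is only that $w$ is smooth with $w(\cdot,t)\in L^2$; in particular, its third derivatives are not a priori in $L^2$, and even the products $A\,\partial(|w|^2)$ need not be integrable without some decay. I would handle this by inserting a smooth spatial cutoff $\chi_R\in\mathcal{C}_0^\infty(\mathbb{R}^2)$ equal to $1$ on $|z|\le R$ and supported in $|z|\le 2R$, computing $\tfrac{d}{dt}\int\chi_R|w|^2\,dA$ rigorously, and then sending $R\to\infty$. The error terms arising from derivatives of $\chi_R$ are supported on the annulus $R\le|z|\le 2R$, involve the bounded coefficients $A$, $\overline{A}$ and their first derivatives, and are bounded by $O(R^{-1})$ times local $L^2$ norms of $w$ and its lower-order derivatives (the latter being controlled by interior regularity for \eqref{eq:w.ev}); they vanish in the limit by dominated convergence, and the cancellation of the dispersive contribution remains exact.
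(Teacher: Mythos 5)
Your key observation is correct and worth stating explicitly: because the first-order perturbation in (\ref{eq:w.ev}) has the special form $A\partial w+\overline{A}\,\overline{\partial}w=\operatorname{Re}(A)\,\partial_{x_{1}}w+\operatorname{Im}(A)\,\partial_{x_{2}}w$, i.e.\ it is a \emph{real} vector field applied to $w$, the quantity $2\operatorname{Re}\int\overline{w}\left(A\partial w+\overline{A}\,\overline{\partial}w\right)dA$ collapses to $\int\left(A\,\partial(|w|^{2})+\overline{A}\,\overline{\partial}(|w|^{2})\right)dA=-2\operatorname{Re}\int(\partial A)\,|w|^{2}\,dA$, while the dispersive part $-\partial^{3}-\overline{\partial}^{3}$ is skew-adjoint and drops out. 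This is a genuinely different, and much more elementary, route than the paper's, which conjugates by the pseudodifferential multiplier $K(t)=e^{\gamma(t,x,D)}$ (Chihara/Doi), with $\gamma$ built by integrating the majorant $\eta$ along the coordinate axes, and then invokes the sharp G\aa rding inequality.

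There is, however, a gap at the point where you set $C_{0}=\|\partial A\|_{\infty}$ and assert it is ``finite by the smoothness and boundedness hypothesis on $A$.'' That inference is false: a smooth bounded function need not have bounded derivatives, and the lemma's hypotheses --- $A$ bounded and dominated by a bounded $\eta$ --- control only the size of $A$, not its gradient. This is not a technicality: avoiding any dependence on $\nabla A$ is exactly what the paper's multiplier construction is for, since the commutator $\left[\gamma,p_{0}(D)\right]$ manufactures the positive symbol $\eta(x,t)\left\vert\xi\right\vert$ that absorbs the first-order term using only the majorant $\eta$. So your argument proves a correct statement under the additional hypothesis that $\operatorname{Re}\partial A$ is bounded; that hypothesis does hold in the paper's application (where $A$ is built from Schwartz-class data via Beurling transforms, so $\partial A$ is bounded), and your proof would therefore suffice for the purposes of Theorem \ref{thm:mNV}, but it does not establish the lemma as stated. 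A secondary and lesser issue: in your cutoff argument the boundary terms coming from $-\partial^{3}-\overline{\partial}^{3}$ involve $\partial^{2}w$ on the annulus $R\leq\left\vert z\right\vert\leq2R$, and for them to vanish you need those local norms to be $o(R)$, which does not follow from $w(\cdot,t)\in L^{2}$ plus smoothness and is not supplied by interior regularity alone; the paper is equally formal on this point, so I would not weight it heavily.
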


\begin{proof}
We apply the multiplier method of Chihara \cite{Chihara:2004} (applied to
third-order dispersive nonlinear equations; see Doi \cite{Doi:1994} for a
similar pseudodifferential multiplier method applied to Schr\"{o}dinger-type
equations) to (\ref{eq:w.ev}). Let $\eta$ be a function with%
\[
2\left\vert A(z,t)\right\vert \leq\eta(z,t),
\]
and set%
\[
p_{0}(\xi)=\frac{1}{8}\left(  \xi_{1}^{3}-6\xi_{1}\xi_{2}^{2}\right)  ,
\]
the symbol of the operator $-\partial^{3}-\overline{\partial}^{3}$. With
$z=x_{1}+ix_{2}$ and $\lambda>0$ to be chosen, let
\begin{align}
\gamma(t,x,\xi)  &  =\left(  \int_{-\infty}^{x_{1}}\eta(y,x_{2},t)~dy+\int
_{-\infty}^{x_{2}}\eta(x_{1},y,t)~dy\right) \label{eq:gamma.def}\\
&  \times\frac{\partial p_{0}(\xi)}{\partial\xi_{1}}\frac{\left\vert
\xi\right\vert }{\left\vert \nabla p_{0}(\xi)\right\vert ^{2}}\chi\left(
\frac{\left\vert \xi\right\vert }{\lambda}\right) \nonumber
\end{align}
where $\chi\in\mathcal{C}_{0}^{\infty}([0,\infty))$ is a nonnegative function
with $\chi(t)=0$ for $0\leq t<1/2$ and $\chi(t)=1$ for $t\geq1$. The function
$\gamma$ is constructed so that the principal symbol of the commutator
$\left[  \gamma,p_{0}(D)\right]  $ obeys
\begin{align}
\sigma\left(  \left[  \gamma,p_{0}(D)\right]  \right)   &  =\nabla_{x}%
\gamma(x,\xi,t)\cdot\nabla_{\xi}p_{0}(\xi)\label{eq:gamma.con}\\
&  =\eta(x_{1},x_{2},t)\cdot\left\vert \xi\right\vert \chi\left(
\frac{\left\vert \xi\right\vert }{\lambda}\right)  .\nonumber
\end{align}
By the usual quantization, the pseudodifferential operator $\gamma(t,x,D)$
belongs to the class $OPS^{0}(\mathbb{R}^{n})$. It is easy to see that, also,
the symbols%
\begin{align*}
k(t,x,\xi)  &  =e^{\gamma(t,x,\xi)},\\
\widetilde{k}(t,x,\xi)  &  =e^{-\gamma(t,x,\xi)}%
\end{align*}
define pseudodifferential operators $K(t):=k(t,x,D)$ and $\widetilde
{K}(t):=\widetilde{k}(t,x,D)$ in $OPS^{0}(\mathbb{R}^{n})$ with
\[
K(t)\widetilde{K}(t)-I\in OPS^{-1}(\mathbb{R}^{n})
\]
and%
\[
\lim_{\lambda\rightarrow\infty}\sup_{t\in\lbrack0,T]}\left\Vert K(t)\widetilde
{K}(t)-I\right\Vert =0.
\]
Thus, there is a $\lambda_{0}>0$ so that $K(t)$ is invertible for all
$\left\vert \lambda\right\vert \geq\lambda_{0}$. We take $\left\vert
\lambda\right\vert \geq\lambda_{0}$ from now on.

We claim that if $w(t)$ is a solution of the evolution equation (\ref{eq:w.ev}%
) belonging to $L^{2}(\mathbb{R}^{2})$, the inequality
\begin{equation}
\left\Vert K(t)w(t)\right\Vert \leq\left\Vert K(0)w(0)\right\Vert e^{CT}
\label{eq:w.est}%
\end{equation}
holds for $t\in\left[  0,T\right]  $ and a constant $C$. Since $K(0)$ is
invertible for $\lambda$ sufficiently large, this implies that $w(0)=0$.{}

To prove the inequality (\ref{eq:w.est}), we compute%
\begin{align*}
\frac{d}{dt}\left\Vert K(t)w(t)\right\Vert ^{2}  &  =2\operatorname{Re}\left(
K(t)w(t),\left[  K^{\prime}(t)K^{-1}(t)\right]  K(t)w(t)\right) \\
&  +2\operatorname{Re}\left(  K(t)w(t),K(t)L(t)w(t)\right)
\end{align*}
where%
\[
L(t)=-\partial^{3}-\overline{\partial}^{3}+A\partial+\overline{A}%
\overline{\partial}.
\]
We will show that
\[
K(t)L(t)K(t)^{-1}=-Q_{1}(t)+Q_{2}(t)
\]
where $Q_{1}\left(  t\right)  \in OPS^{1,0}(\mathbb{R}^{2})$ which
$q_{1}(x,\xi):=\sigma(Q_{1}(t))$ nonnegative for $\left\vert \xi\right\vert
\geq2\lambda$, and $Q_{2}(t)\in OPS^{0}(\mathbb{R}^{2})$. If so then by the
sharp G\aa rding inequality \cite{Lax:1966},
\begin{equation}
\operatorname{Re}(v,Q_{1}(t)v)\geq-C_{1}\left\Vert v\right\Vert ^{2}
\label{eq:Q1.lower}%
\end{equation}
and hence
\[
\frac{d}{dt}\left\Vert K(t)w(t)\right\Vert ^{2}\leq C_{3}\left\Vert
K(t)w(t)\right\Vert ^{2}%
\]
where $C_{3}$ majorizes $\left\Vert Q_{2}(t)\right\Vert +\left\Vert K^{\prime
}(t)K^{-1}(t)\right\Vert $. The desired result follows from Gronwall's inequality.

Thus, to finish the proof of (\ref{eq:w.est}), we need only prove that
(\ref{eq:Q1.lower}) holds. But%
\[
K(t)L(t)K(t)^{-1}=K(t)(-p_{0}(D)+A\partial+\overline{A}\overline{\partial
})K(t)^{-1}%
\]
The right-hand side has leading symbol $-q_{1}(x_{1},x_{2},\xi,t)$ where
\[
q_{1}(x_{1},x_{2},\xi,t)=\nabla_{\xi}p_{0}(\xi)\cdot\nabla_{x}\gamma
(t,x_{1},x_{2},\xi)+\operatorname{Re}\left[  A(x_{1},x_{2},t)(\xi_{1}-i\xi
_{2})\right]
\]
which is strictly positive for $\left\vert \xi\right\vert \geq2\lambda$ by
(\ref{eq:gamma.con}). This completes the proof.
\end{proof}

Now suppose that $u_{0}\in\mathcal{S}(\mathbb{R}^{2})$, $\partial
u_{0}=\overline{\partial u_{0}}$, and $\int u_{0}(z)~dA(z)=0$. The function
$\varphi_{0}=\overline{\partial}^{-1}u_{0}$ is real-valued and if $u(t)$
solves the mNV\ equation with Cauchy data $u_{0}$, the function $\varphi
(t)=\left(  \overline{\partial}^{-1}u\right)  (t)$ belongs to $L^{2}%
(\mathbb{R}^{2})$ for all $t$. The same is true of $w(t)=\varphi
(t)-\overline{\varphi(t)}$, and $w(0)=0$. It now follows from Lemma
\ref{lemma:weight} that $w(t)=0$ and $\varphi(t)$ is real-valued for all $t$.
This implies that $\partial u=\overline{\partial u}$ for all $t$.

\begin{proof}
[Proof of Theorem \ref{thm:mNV}]An immediate consequence of Lemma
\ref{lemma:mNV.asy}, Lemma \ref{lemma:weight}, and the above remarks.
\end{proof}

\section{Solving the NV\ Equation}

\label{sec:NV}

In this section we prove Theorem \ref{thm:NV}. The key observation is due to
Bogdanov \cite{Bogdanov:1987} and can be checked by straightforward
computation . Recall the Miura map $\mathcal{M}$, defined in (\ref{eq:Miura2}).

\begin{lemma}
Suppose that $u(z,t)$ is a smooth classical solution of (\ref{eq:mNV}) with
\[
\left(  \partial_{z}u\right)  (z,t)=\overline{\left(  \partial_{z}u\right)
(z,t)},
\]
and $\int u(z,t)~dA(z)=0$ for all $t$. Then, the function
\[
q(z,t)=\mathcal{M}\left(  u(~\cdot~,t)\right)  (z)
\]
is a smooth classical solution of (\ref{eq:NV}).
\end{lemma}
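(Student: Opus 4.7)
The plan is to verify the identity by direct computation, following Bogdanov. First I would introduce the potential $\varphi = \overline{\partial}^{-1}u$. Under the standing hypotheses $\partial u = \overline{\partial u}$ and $\int u(z,t)\,dA(z) = 0$, the function $\varphi(\cdot,t)$ is real-valued with $\mathcal{O}(|z|^{-2})$ decay at infinity (Lemma \ref{lemma:mNV.asy}). Writing $u = \overline{\partial}\varphi$ and $\overline{u} = \partial\varphi$, one obtains
\[
q = 2\partial u + |u|^2 = \tfrac{1}{2}\Delta\varphi + (\partial\varphi)(\overline{\partial}\varphi),
\]
which equals $\Delta\psi/\psi$ for $\psi = e^{\varphi/2}$; in particular $(-\Delta+q)\psi = 0$, recovering the conductivity-type factorization. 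Moreover, the mNV equation (\ref{eq:mNV}) for $u$ is equivalent, upon applying $\overline{\partial}^{-1}$, to the scalar evolution (\ref{eq:mNV.pot}) for $\varphi$.

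Next I would assemble the five terms of NV in terms of $\varphi$ and $\varphi_t$. Differentiating the identity above in time gives
\[
q_t = \tfrac{1}{2}\Delta\varphi_t + (\partial\varphi_t)(\overline{\partial}\varphi) + (\overline{\partial}\varphi_t)(\partial\varphi),
\]
into which one substitutes $\varphi_t$ from (\ref{eq:mNV.pot}). The dispersive terms $\partial^3 q$ and $\overline{\partial}^3 q$ expand termwise by Leibniz. For the nonlocal NV terms, the key identity is
\[
\overline{\partial}^{-1}\partial q = 2\partial^2\varphi + \mathcal{S}\bigl((\partial\varphi)(\overline{\partial}\varphi)\bigr),
\]
where $\mathcal{S} = \overline{\partial}^{-1}\partial$ is the Beurling transform from Section \ref{sec:prelim}, with the complex-conjugate identity for $\partial^{-1}\overline{\partial} q$. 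Substituting these into the cubic expressions $q\,\overline{\partial}^{-1}\partial q$ and $q\,\partial^{-1}\overline{\partial} q$ and applying the outer $\partial$ or $\overline{\partial}$ yields local polynomial expressions in derivatives of $\varphi$ together with Beurling-transform remainders.

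Collecting everything, the combination $q_t + \partial^3 q + \overline{\partial}^3 q + \tfrac{3}{4}\partial(q\,\overline{\partial}^{-1}\partial q) + \tfrac{3}{4}\overline{\partial}(q\,\partial^{-1}\overline{\partial} q)$ reduces, after substitution of (\ref{eq:mNV.pot}), to an identity among local polynomials in the derivatives of $\varphi$: each monomial of the form $\partial^j\overline{\partial}^k\varphi \cdot \partial^l\overline{\partial}^m\varphi \cdot \partial^p\overline{\partial}^n\varphi$ matches on the two sides, and the residual Beurling-transform contributions cancel in complex-conjugate pairs. The main obstacle is the combinatorial bookkeeping: many monomial types must be organized by weight and symmetry, although each step is elementary. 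A conceptual shortcut, which avoids most of the calculation, is provided by the Lax-pair formulation: NV is the compatibility condition for $(-\Delta + q)\psi = 0$ together with $\psi_t = A\psi$, where $A$ is an explicit third-order operator depending on $q$. Writing $\psi = e^{\varphi/2}$ converts $\psi_t = A\psi$ into a scalar evolution for $\varphi$ whose $\overline{\partial}$-derivative is precisely (\ref{eq:mNV}) for $u = \overline{\partial}\varphi$. Hence compatibility of the original Lax pair transfers to give NV for $q = \mathcal{M}(u)$, and the required smoothness of $q$ follows from that of $u$.
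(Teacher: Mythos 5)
Your plan is consistent with what the paper actually does here: the paper gives no computation at all for this lemma, stating only that it ``can be checked by straightforward computation'' and noting in the accompanying remark that Bogdanov obtains the Miura map from the gauge equivalence of the two Lax pairs --- which is precisely your proposed ``conceptual shortcut.'' Your setup identities are correct: with $\varphi=\overline{\partial}^{-1}u$ real (this is where the hypothesis $\partial u=\overline{\partial u}$ enters, and $\int u\,dA=0$ supplies the decay making $\overline{\partial}^{-1}$ and the integrations by parts legitimate), one indeed has $q=\tfrac12\Delta\varphi+(\partial\varphi)(\overline{\partial}\varphi)=\Delta\psi/\psi$ for $\psi=e^{\varphi/2}$, and $\overline{\partial}^{-1}\partial q=2\partial^{2}\varphi+\overline{\partial}^{-1}\partial\bigl((\partial\varphi)(\overline{\partial}\varphi)\bigr)$. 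Two caveats. First, the decisive step --- the actual cancellation of all monomials --- is only asserted, so as a standalone argument the proposal stops exactly where the real work begins; this is no worse than the paper, but it is not yet a proof. Second, one assertion is imprecise and would mislead you in the bookkeeping: the nonlocal Beurling-transform contributions from $\tfrac34\partial\bigl(q\,\overline{\partial}^{-1}\partial q\bigr)$ and $\tfrac34\overline{\partial}\bigl(q\,\partial^{-1}\overline{\partial}q\bigr)$ do \emph{not} cancel against each other ``in complex-conjugate pairs'' --- since $q$ and $(\partial\varphi)(\overline{\partial}\varphi)$ are real, those two terms are conjugates and their sum is twice a real part, not zero. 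They must instead be matched against the nonlocal term $\tfrac34(\partial\varphi+\overline{\partial}\varphi)\cdot\overline{\partial}^{-1}\partial\bigl(|\partial\varphi|^{2}\bigr)$ that enters $q_{t}$ when you substitute (\ref{eq:mNV.pot}), together with its image under the outer product rule. If you intend the computational route, that matching is the heart of the verification; if you intend the Lax-pair route, you should exhibit the gauge transformation relating the Dirac-type system (\ref{eq:ddbar}) to the Schr\"odinger operator $-\Delta+q$ explicitly, as in Section \ref{sec:conductivity}, rather than merely invoke compatibility.
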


\begin{remark}
In Bogdanov \cite{Bogdanov:1987}, the mNV\ and NV\ are shown to be
gauge-equivalent, and the Miura map is computed from the gauge equivalence.
\end{remark}

\begin{proof}
[Proof of Theorem \ref{thm:NV}]Pick $u_{0}\in H^{2,1}(\mathbb{R}^{2})\cap
L^{1}(\mathbb{R}^{2})$ so that the conditions $\partial u_{0}=\overline
{\partial u_{0}}$ and $\int u_{0}(z)~dA(z)=0$ hold. Let $\left\{
u_{0,n}\right\}  $ be a sequence from $\mathcal{S}(\mathbb{R}^{2})$ with
$u_{n,0}\rightarrow u_{0}$ in $H^{2,1}(\mathbb{R}^{2})\cap L^{1}%
(\mathbb{R}^{2})$. By local Lipschitz continuity of the scattering maps, for
any $T>0$, the sequence $\left\{  u_{n}\right\}  $ from $C([0,T];H^{2,1}%
(\mathbb{R}^{2}))$ given by
\[
u_{n}(z,t)=\mathcal{I}\left(  e^{t\left(  \left(  \diamond\right)
^{3}-\left(  \overline{\diamond}\right)  ^{3}\right)  }\left(  \mathcal{R}%
u_{0,n}\right)  (\diamond)\right)  (z)
\]
converges in $C([0,T];H^{2,1}(\mathbb{R}^{2}))$ to
\[
u(z,t):=\mathcal{I}\left(  e^{t\left(  \left(  \diamond\right)  ^{3}-\left(
\overline{\diamond}\right)  ^{3}\right)  }\left(  \mathcal{R}u_{0}\right)
(\diamond)\right)  (z).
\]
This convergence implies that $q_{n}(z,t):=\mathcal{M}(u_{n}(\diamond,t))(z)$
converges in $L^{2}(\mathbb{R}^{2})$.

Recall (\ref{eq:NV.weak}). Since $q_{n}\rightarrow q$ in $C\left(
[0,T];L^{2}(\mathbb{R}^{2})\right)  $ it follows from the $L^{2}$-bounded of
$\mathcal{S}=\partial\overline{\partial}^{-1}$ that $q_{n}\overline{\partial
}^{-1}\partial q_{n}\rightarrow q\overline{\partial}^{-1}\partial q$ and
$q_{n}\partial^{-1}\overline{\partial}q_{n}\rightarrow q\partial^{-1}%
\overline{\partial}q$ in $C\left(  [0,T],L^{1}(\mathbb{R}^{2}\right)  )$. We
conclude that $q$ is a weak solution of the NV equation.
\end{proof}

\section{Conductivity-Type Potentials}

\label{sec:conductivity}

In this section we show that our solution of NV\ coincides with that of
Lassas, Mueller, Siltanen, and Stahel \cite{LMSS:2011} in the cases they
consider, proving Theorem \ref{thm:PP.and.LMSS}.

We briefly recall some of the notation and results of Lassas, Mueller, and
Siltanen \cite{LMS:2007}. Assume first that $q\in\mathcal{C}_{0}^{\infty
}(\mathbb{R}^{2})$ and is of conductivity type. We denote by $\psi(x,\zeta)$
the unique solution of the problem%
\begin{align}
\left(  -\Delta+q\right)  \psi &  =0,\label{eq:Schr0}\\
\lim_{\left\vert z\right\vert \rightarrow\infty}\left(  e^{-i\left(
x\cdot\zeta\right)  }\psi(x,\zeta)-1\right)   &  =0.\nonumber
\end{align}
where $x=\left(  x_{1},x_{2}\right)  $ and $\zeta\in\mathbb{C}^{2}$ satisfies
$\zeta\cdot\zeta=0$. Here $a\cdot b$ denotes the Euclidean inner product
without complex conjugation. Henceforth, we set $\zeta=\left(  k,ik\right)  $
for $k\in\mathbb{C}$, which amounts to choosing a branch of the variety
$\mathcal{V}=\left\{  \zeta\in\mathbb{C}^{2}:\zeta\cdot\zeta=0\right\}  $.
Since $q$ is of conductivity type, it follows from Theorem 3 in
\cite{Nachman:1995} that the problem (\ref{eq:Schr0}) admits a unique solution
for each $k\in\mathbb{C}$. We set $z=x_{1}+ix_{2}$ and define%
\begin{equation}
m(z,k)=e^{-ikz}\psi(x,\zeta) \label{eq:m}%
\end{equation}
for $\zeta=\left(  k,ik\right)  $.

The direct scattering map%
\begin{equation}
\mathcal{T}:q\rightarrow\mathbf{t}\label{eq:T}%
\end{equation}
is defined by
\begin{equation}
\mathbf{t}(k)=\int e^{i(\overline{k}\overline{z}+kz)}%
q(z)m(z,k)~dA(z)\label{eq:t.rep.bis*}%
\end{equation}
On the other hand, the inverse map
\begin{equation}
\mathcal{Q}:\mathbf{t}\rightarrow q\label{eq:Q}%
\end{equation}
is defined by%
\begin{equation}
q(z)=\frac{i}{\pi^{2}}\overline{\partial}_{z}\left(  \int_{\mathbb{C}}%
\frac{\mathbf{t}(k)}{\overline{k}}e^{-i\left(  kz+\overline{k}\overline
{z}\right)  }\overline{m(z,k)}~dA(k)\right)  \label{eq:q.rep.bis*}%
\end{equation}
where $m(z,k)$ is reconstructed from $t$ via the $\overline{\partial}$-problem%
\begin{equation}
\overline{\partial}_{k}m(x,k)=\frac{\mathbf{t}(k)}{4\pi k}e^{-i\left(
kz+\overline{k}\overline{z}\right)  }(z)\overline{m(x,k)}.\label{eq:m.dbar}%
\end{equation}
Let
\[
\mathbf{m}_{t}^{n}(k)=\exp\left(  -i^{n}\left(  k^{n}+\overline{k}^{n}\right)
t\right)
\]
for an odd positive integer $n$. In \cite{LMSS:2011}, Lassas, Mueller and
Siltanen prove:

\begin{theorem}
\cite{LMSS:2011} For $q_{0}\in\mathcal{C}_{0}^{\infty}(\mathbb{R}^{2})$ radial
and of conductivity type, $\mathcal{QT}(q_{0})=q_{0}$. Moreover, if
\begin{equation}
q(t):=\mathcal{Q}\left(  \mathbf{m}_{t}^{n}\mathcal{T}q_{0}\right)
\label{eq:q.flow}%
\end{equation}
then \thinspace$q(t)$ is a continuous, real-valued potential with $q(t)\in
L^{p}(\mathbb{R}^{2})$ for $p\in(1,2)$.
\end{theorem}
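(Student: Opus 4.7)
The plan is to run Nachman's $\overline{\partial}$-reconstruction machinery \cite{Nachman:1995} and check that it survives multiplication of the scattering data by the time-evolution phase $\mathbf{m}_t^n$. Both the inversion identity $\mathcal{QT}q_0=q_0$ and the assertions about $q(t)$ rest on two consequences of conductivity type plus radiality: (a) the exceptional set $\mathcal{E}_{q_0}$ is empty and $|\mathbf{t}(k)|\leq C|k|^{\varepsilon}$ near $k=0$ (Theorem 1.2 of the introduction), so that $\mathbf{t}(k)/\overline{k}$ is locally integrable at the origin; and (b) $\mathbf{t}$ has enough integrability and decay at infinity that the $\overline{\partial}_k$-equation \eqref{eq:m.dbar} is uniquely solvable for $m$ in a suitable function class.

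For the inversion $\mathcal{QT}q_0=q_0$, I would first derive \eqref{eq:m.dbar} from the definition of $\psi(x,\zeta)$, using uniqueness of CGO solutions to identify the $\overline{\partial}_k$-derivative of $m$ with a multiple of $\overline{m}$. Conversely, given $\mathbf{t}=\mathcal{T}q_0$, I would invert $\overline{\partial}_k$ by $P_k$ to obtain an integral equation for $m$, and use Lemma~\ref{lemma:P} together with the pointwise bound from (a) to show that the resulting integral operator is contractive for $|z|$ large. Radial symmetry of $q_0$ propagates through \eqref{eq:t.rep.bis*} to radiality of $\mathbf{t}$, which permits polar-coordinate estimates sufficient to close the contraction uniformly in $z$. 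Once $m$ is reconstructed uniquely from $\mathbf{t}$, applying $\overline{\partial}_z$ in \eqref{eq:q.rep.bis*} and exploiting the Schr\"odinger equation satisfied by $\psi=e^{ikz}m$ identifies the recovered potential as $q_0$.

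For $q(t)=\mathcal{Q}(\mathbf{m}_t^n\mathcal{T}q_0)$, the key observation is that $\mathbf{m}_t^n(k)=\exp(-i^n(k^n+\overline{k}^n)t)$ is unimodular, so $|\mathbf{m}_t^n\mathbf{t}(k)|=|\mathbf{t}(k)|$; both the $L^p$-membership of $\mathbf{t}$ and the small-$k$ bound in (a) survive the multiplication for every $t$. Hence the $\overline{\partial}_k$-problem \eqref{eq:m.dbar} with $\mathbf{t}$ replaced by $\mathbf{m}_t^n\mathbf{t}$ is uniquely solvable for every $t$ in the same function class, and $q(t)$ is well defined via \eqref{eq:q.rep.bis*}. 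Reality of $q(t)$ follows from a symmetry of $\mathbf{t}$ in $k$ induced by reality of $q_0$ together with uniqueness of CGO solutions, combined with a matching symmetry of $\mathbf{m}_t^n$ for odd $n$, which propagates to the reconstructed $m$ and forces the integral in \eqref{eq:q.rep.bis*} to be real. The $L^p$-bound for $p\in(1,2)$ on $q(t)$ follows from Cauchy-transform estimates (Lemma \ref{lemma:P}) applied to \eqref{eq:q.rep.bis*}, since $\mathbf{t}/\overline{k}$ is integrable near $0$ by (a) and $\mathbf{t}\in L^p$ at infinity. Continuity of $t\mapsto q(t)$ then follows by dominated convergence, since $t\mapsto\mathbf{m}_t^n(k)$ is continuous pointwise and the reconstruction integrands admit $t$-independent integrable majorants.

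The main obstacle is uniform-in-$t$ control of the contraction solving the $\overline{\partial}_k$-problem: stability of $m(\cdot,\cdot,t)$ in $t$ is essential both for passing to the limit in \eqref{eq:q.rep.bis*} and for the $L^p$-bounds. The radial hypothesis is crucial here, as it produces $|\mathbf{m}_t^n\mathbf{t}|=|\mathbf{t}|$ together with the radial structure needed for the small-$k$ and large-$k$ estimates; without radiality, oscillation of $\mathbf{m}_t^n$ against angular variation in $\mathbf{t}$ could destroy the control at the origin and would require a more delicate stationary-phase analysis.
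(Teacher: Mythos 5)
You should first note that the paper does not prove this theorem at all: it is quoted, with attribution, from Lassas--Mueller--Siltanen--Stahel \cite{LMSS:2011} (which in turn builds on \cite{LMS:2007} and \cite{Nachman:1995}), and is used in \S\ref{sec:conductivity} only as input for the comparison in Theorem \ref{thm:PP.and.LMSS}. So there is no in-paper proof to measure your sketch against; I can only judge it against the cited literature.

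As a reconstruction of the LMSS argument, your outline has the right ingredients --- Nachman's characterization of conductivity type (emptiness of $\mathcal{E}_{q_{0}}$ and the bound $\left\vert \mathbf{t}(k)\right\vert \leq C\left\vert k\right\vert ^{\varepsilon}$ near $k=0$), unimodularity of $\mathbf{m}_{t}^{n}$ for odd $n$, and Cauchy-transform estimates for the $L^{p}$ statement --- but two points need repair. First, unique solvability of the $\overline{\partial}_{k}$-problem \eqref{eq:m.dbar} cannot be obtained by a contraction argument: the integral operator obtained by inverting $\overline{\partial}_{k}$ is not small for general conductivity-type data, and ``contractive for $\left\vert z\right\vert$ large'' does not yield existence and uniqueness for \emph{all} $z$, which is what the reconstruction formula \eqref{eq:q.rep.bis*} requires. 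The actual mechanism is the Fredholm alternative combined with a Liouville-type vanishing theorem for generalized analytic functions (Vekua \cite{Vekua:1962}, as used by Nachman \cite{Nachman:1995} and in \cite{LMS:2007}); this is where the integrability of $\mathbf{t}(k)/\overline{k}$ near the origin, guaranteed by conductivity type, is really consumed. Second, your closing claim that radiality is what ``produces $\left\vert \mathbf{m}_{t}^{n}\mathbf{t}\right\vert =\left\vert \mathbf{t}\right\vert$'' is wrong: that identity is pure unimodularity of $\mathbf{m}_{t}^{n}$ and holds for arbitrary $\mathbf{t}$. Radiality enters elsewhere --- it is the hypothesis under which \cite{LMS:2007} establishes the mapping properties of $\mathcal{T}$ (in particular the precise behavior of $\mathbf{t}(k)/\overline{k}$ at the origin and the decay at infinity needed for $q(t)\in L^{p}$, $p\in(1,2)$), and it supplies the symmetries of $\mathbf{t}$ used to prove reality of $q(t)$. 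With those two corrections your plan is consistent with the published proof.
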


They conjecture that for $n=3$, $q(t)$ given by (\ref{eq:q.flow}) solves the
NV\ equation, provided that $q_{0}$ obeys the hypotheses of Theorem
\cite{LMSS:2011}. We will prove that this is the case (for a larger class of
$q_{0}$) by proving Theorem \ref{thm:PP.and.LMSS}.

We will prove Theorem \ref{thm:PP.and.LMSS} in two steps. First, we show that
for $u\in\mathcal{S}(\mathbb{R}^{2})$ with $\partial u=\overline{\partial u}$
and $\int u_{0}(z)~dA(z)=0$, the scattering data $r=\mathcal{R}u$ is related
to the scattering transform $\mathbf{t}=\mathcal{T}q$ for $q=2\partial
u+\left\vert u\right\vert ^{2}$ by the identity%
\[
\mathbf{t}(k)=-2\pi i\overline{k}~\overline{r(ik)}.
\]
Next, we show that for $\mathbf{t}$ of the above form with $r=\mathcal{R}u$,
the identity%
\[
\left(  \mathcal{Q}\mathbf{t}\right)  (z)=2(\partial u)(z)+\left\vert
u(z)\right\vert ^{2}.
\]
Theorem \ref{thm:PP.and.LMSS} is an easy consequence of these two identities.

The key to both computations is the following construction of complex
geometric optics solutions for the potential $q=\partial u+\left\vert
u\right\vert ^{2}$ from the solutions $\mu=\left(  \mu_{1},\mu_{2}\right)
^{T}$ of (\ref{eq:mu.dbar}). First, suppose that $\Phi=\left(  \Phi_{1}%
,\Phi_{2}\right)  ^{T}$ is a vector-valued solution of the linear system%
\begin{equation}
\left(
\begin{array}
[c]{cc}%
\overline{\partial} & 0\\
0 & \partial
\end{array}
\right)  \Phi=\frac{1}{2}\left(
\begin{array}
[c]{cc}%
0 & u\\
u & 0
\end{array}
\right)  \Phi\label{eq:ddbar}%
\end{equation}
A straightforward calculation shows that the function%
\[
\widetilde{\psi}=\Phi_{1}+\Phi_{2}%
\]
solves the zero-energy Schr\"{o}dinger equation

\begin{equation}
\left(  -\Delta+q\right)  \widetilde{\psi}=0 \label{eq:Szero}%
\end{equation}
for $q=2\partial u+\left\vert u\right\vert ^{2}$.

Recall that matrix-valued solutions of (\ref{eq:ddbar}) are related to the
solutions $\mu$ of (\ref{eq:mu.dbar})\ by
\[
\left(
\begin{array}
[c]{c}%
\mu_{1}\\
\mu_{2}%
\end{array}
\right)  =\left(
\begin{array}
[c]{c}%
\Phi_{1}\\
\overline{\Phi_{2}}%
\end{array}
\right)  e^{-kz}%
\]
so that%
\begin{equation}
\Phi_{11}+\Phi_{21}=e^{kz}\mu_{1}(z,k)+e^{\overline{k}\overline{z}}%
\overline{\mu_{2}(z,k)}\label{eq:mu.to.phi}%
\end{equation}
solves (\ref{eq:Szero}). To compute its asymptotic behavior, using $\left(
\mu_{1},\mu_{2}\right)  \rightarrow(1,0)$ as $\left\vert z\right\vert
\rightarrow\infty$ we conclude that $e^{-kz}\widetilde{\psi}(z,k)\rightarrow1$
as $\left\vert z\right\vert \rightarrow\infty$. Hence, denoting by $\psi$ the
solution of the problem (\ref{eq:Szero}) with $\zeta=(k,ik)$ for
$k\in\mathbb{C}$, we have%
\begin{align}
\psi(z,k) &  =\widetilde{\psi}(z,ik)\label{eq:mu.to.psi}\\
&  =e^{ikz}\mu_{1}(z,ik)+e^{-i\overline{k}\overline{z}}\overline{\mu
_{2}(z,ik)}\nonumber
\end{align}
so%
\[
m(z,k)=\mu_{1}(z,k)+e^{-i(kz+\overline{k}\overline{z})}\overline{\mu
_{2}(z,ik)}.
\]
Now, we can prove:

\begin{lemma}
\label{lemma:R.to.T}Let $u\in\mathcal{C}_{0}^{\infty}(\mathbb{R}^{2})$ with
$\partial u=\overline{\partial u}$, suppose $\int u(z)~dA(z)=0$, and let
$q=2\partial u+\left\vert u\right\vert ^{2}$. Then%
\begin{equation}
\left(  \mathcal{T}q\right)  (k)=-2\pi i\overline{k}~\overline{(\mathcal{R}%
u)(ik)}. \label{eq:R.to.T}%
\end{equation}

\end{lemma}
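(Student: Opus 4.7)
The plan is a direct integration-by-parts computation that exploits the factorization in~\eqref{eq:mu.to.psi}, which expresses the Schr\"odinger CGO $m(z,k)$ for the potential $q=2\partial u+|u|^2$ in terms of the Beals--Coifman eigenfunctions $\mu_1(z,ik),\mu_2(z,ik)$ attached to $u$. First I would rewrite~\eqref{eq:mu.to.psi} as
\[
m(z,k)=\mu_1(z,ik)+e_{ik}(z)\,\overline{\mu_2(z,ik)},
\]
using $e^{-i(kz+\bar k\bar z)}=e_{ik}(z)$, and observe that $e^{i(\bar k\bar z+kz)}=e_{-ik}(z)$. Substituting into~\eqref{eq:t.rep.bis*} splits the scattering integral as
\[
(\mathcal T q)(k)=\int e_{-ik}\,q\,\mu_1(z,ik)\,dA \; + \; \int q\,\overline{\mu_2(z,ik)}\,dA \;=:\; A+B,
\]
and the goal is to show that $B=-A$ up to a multiple of $\int e_{-ik}\bar u\,\mu_1(z,ik)\,dA$, which will furnish the $\overline{r(ik)}$ on the right-hand side of~\eqref{eq:R.to.T}.

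Next I would expand $q=2\partial u+|u|^2$ inside $B$ and treat the two summands separately. For $2\int(\partial u)\,\overline{\mu_2(z,ik)}\,dA$, I would integrate by parts in $\partial$ and invoke the identity
\[
\partial\,\overline{\mu_2(z,ik)}=\overline{\bar\partial\mu_2(z,ik)}=\tfrac12 e_{-ik}\,\bar u\,\mu_1(z,ik),
\]
which is the conjugate of the second line of~\eqref{eq:mu.dbar} specialized to $k\mapsto ik$; this converts the piece into $-\int e_{-ik}|u|^2\mu_1(z,ik)\,dA$. For $\int|u|^2\overline{\mu_2(z,ik)}\,dA$, I would factor $|u|^2\overline{\mu_2}=\bar u\cdot(u\,\overline{\mu_2})$ and use the first line of~\eqref{eq:mu.dbar} at $k\mapsto ik$ in the form $u\,\overline{\mu_2(z,ik)}=2e_{-ik}\bar\partial\mu_1(z,ik)$, then integrate by parts in $\bar\partial$ and distribute the derivative onto $\bar u\,e_{-ik}$ by Leibniz. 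Two terms fall out: one carrying $\bar\partial e_{-ik}=i\bar k\,e_{-ik}$, producing the target contribution proportional to $\bar k\int e_{-ik}\bar u\,\mu_1(z,ik)\,dA$, and one carrying $\bar\partial\bar u=\overline{\partial u}$, which by the hypothesis $\partial u=\overline{\partial u}$ equals $\partial u$ and produces a term $-2\int e_{-ik}(\partial u)\mu_1(z,ik)\,dA$.

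Combining these, the total contribution of $q$ to $B$ collapses to $-A$ plus a single term of the form $c\,\bar k\int e_{-ik}\bar u\,\mu_1(z,ik)\,dA$ for an explicit numerical constant $c$, so $A+B$ reduces to this one integral. Recognizing it as $-\pi\,\overline{(\mathcal R u)(ik)}$ by conjugating~\eqref{eq:r.rep} at $k\mapsto ik$ and using $\overline{e_{ik}}=e_{-ik}$ yields~\eqref{eq:R.to.T} after a bookkeeping check of the prefactor. The argument is essentially accounting: the only analytic input is that $u\in\mathcal C_0^\infty(\mathbb R^2)$ and $\mu_1,\mu_2$ are bounded continuous in $z$, so every integration by parts is legitimate with no boundary term. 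The hypothesis $\partial u=\overline{\partial u}$ is used exactly once, in the final step of the $\bar\partial$-integration by parts above; the hypothesis $\int u\,dA=0$ is not needed for this identity per se, but enters earlier to ensure $q$ is of conductivity type so that $m(z,k)$ and $\mathcal Tq(k)$ are well defined for every $k\in\mathbb C$. The main pitfall is the tight sign bookkeeping among the factors $i$, $\bar k$, and the $\overline{(\cdot)}/\partial/\bar\partial$ interchange, so I would verify the prefactor $-2\pi i\bar k$ at the end as a consistency check rather than as the core difficulty.
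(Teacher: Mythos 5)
Your proposal is correct and follows essentially the same route as the paper: both use the factorization $m(z,k)=\mu_1(z,ik)+e^{-i(kz+\bar k\bar z)}\overline{\mu_2(z,ik)}$, integrate by parts against the $\overline\partial$-system (\ref{eq:mu.dbar}), invoke $\partial u=\overline{\partial u}$ exactly once to make the cross terms cancel, and identify the surviving integral via (\ref{eq:r.rep}). The only difference is bookkeeping — the paper performs the $\overline\partial$-integration by parts on the $2\partial u\cdot e_{-ik}\mu_1$ term while you perform the dual one on $|u|^2\overline{\mu_2}$ — which yields the identical cancellation.
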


\begin{proof}
We compute%
\begin{align*}
\left(  \mathcal{T}q\right)  (k)  &  =\int q(z)e^{i\overline{k}\overline{z}%
}\psi(z,k)~dA(z)\\
&  =\int2(\overline{\partial u})(z)e^{i\overline{(k}\overline{z}+kz)}\mu
_{1}(z,ik)~dA(z)\\
&  +\int2(\partial u)(z)\overline{\mu_{2}(z,ik)}~dA(z)\\
&  +\int\left\vert u(z)\right\vert ^{2}\left(  e^{i\overline{(k}\overline
{z}+kz)}\mu_{1}(z,ik)+\overline{\mu_{2}(z,ik)}\right)  ~dA(z)\\
&  =I_{1}+I_{2}+I_{3}%
\end{align*}
where in the first right-hand term we used $\partial u=\overline{\partial u}$.
We can integrate by parts in each of the first two right-hand terms and use
(\ref{eq:mu.dbar}) to obtain%
\begin{align*}
I_{1}  &  =-2i\overline{k}\int\overline{u(z)}e^{i\left(  kz+\overline
{k}\overline{z}\right)  }\mu_{1}(z,ik)~dA(z)-\int\left\vert u(z)\right\vert
^{2}\overline{\mu_{2}(z,ik)}~dA(z),\\
I_{2}  &  =-\int\left\vert u(z)\right\vert ^{2}e^{i\left(  kz+\overline
{k}\overline{z}\right)  }\mu_{1}(z,ik)~dA(z).
\end{align*}
Using the relation (\ref{eq:r.rep}), we recover (\ref{eq:R.to.T}).
\end{proof}

Next, we analyze the inverse scattering transform $\mathcal{Q}$ defined by
(\ref{eq:q.rep}). We will prove:

\begin{lemma}
\label{lemma:MI.to.Q}Let $u\in\mathcal{S}(\mathbb{R}^{2})$ with $\partial
u=\overline{\partial u}$, and suppose that $\int u(z)~dA(z)=0$. Let
$r=\mathcal{R}u$ and suppose that $t$ is given by (\ref{eq:R.to.T}). Then%
\[
(\mathcal{Q}t)(z)=2\left(  \partial u\right)  (z)+\left\vert u(z)\right\vert
^{2}.
\]

\end{lemma}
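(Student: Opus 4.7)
The plan is to substitute the formula $\mathbf{t}(k) = -2\pi i\,\bar k\,\overline{(\mathcal{R}u)(ik)}$ established in Lemma~\ref{lemma:R.to.T} into the defining integral (\ref{eq:q.rep.bis*}) for $\mathcal{Q}\mathbf{t}$. The factor $1/\bar k$ cancels cleanly, leaving a $k$-integral with integrand a constant times $\overline{r(ik)}\,e^{-i(kz+\bar k\bar z)}\,\overline{m(z,k)}$, outside of which stands $\bar\partial_z$. I would then use the representation (\ref{eq:mu.to.psi}) to rewrite $m(z,k)$ in terms of $\mu_1(z,ik)$ and $\mu_2(z,ik)$.

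Next I would perform the change of variable $k\mapsto ik$: the exponential $e^{-i(kz+\bar k\bar z)}$ becomes the kernel $e_k(z)=e^{\bar k\bar z-kz}$ appearing in (\ref{eq:mu.dbar})--(\ref{eq:nu.dbar}), and the integrand splits into two pieces, one involving $\overline{r(k)}\,e_k(z)\,\overline{\mu_1(z,k)}$ and one involving $\overline{r(k)}\,\mu_2(z,k)$. The first piece is, up to the constant $-\pi$, the complex conjugate of the representation (\ref{eq:u.rep}) for $u=\mathcal{I}r$ (using $\nu_1=\mu_1$), and therefore integrates to a constant multiple of $\bar u(z)$.

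For the second piece I would commute the outer $\bar\partial_z$ inside the $k$-integral, which is legitimate because $u_0\in\mathcal{S}(\mathbb{R}^2)$ makes $r$ Schwartz and hence the integrand rapidly decaying in $k$. Invoking the second line of (\ref{eq:mu.dbar}), namely $\bar\partial_z\mu_2(z,k)=\tfrac12 e_k(z)\,u(z)\,\overline{\mu_1(z,k)}$, converts the derivative into $\tfrac12 u(z)$ times exactly the $e_k\overline{\mu_1}$-integral from the first piece, producing a constant multiple of $|u(z)|^2$. Applying $\bar\partial_z$ to the first piece gives $\bar\partial_z\bar u=\overline{\partial u}$, which equals $\partial u$ by the hypothesis $\partial u=\overline{\partial u}$. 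Tallying the constants, the two contributions should combine to $2\,\partial u(z)+|u(z)|^2$.

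The main obstacle is purely bookkeeping: the signs and factors of $i$ and $\pi$ must align after the change of variable and the various complex conjugations acting on both $e_k$ and $e^{-i(kz+\bar k\bar z)}$. The hypothesis $\int u\,dA=0$ is not used directly in this computation but enters through Lemma~\ref{lemma:R.to.T} and the conductivity-type setup that makes the inverse scattering formula (\ref{eq:q.rep.bis*}) valid for the potential $q=2\partial u+|u|^2$.
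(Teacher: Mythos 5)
Your proposal is correct and follows essentially the same route as the paper: split $\mathcal{Q}\mathbf{t}$ into the $\overline{\mu_1}$-piece and the $\mu_2$-piece via (\ref{eq:mu.to.psi}), identify the first with $2\overline{\partial u}=2\partial u$ through the representation (\ref{eq:u.rep}) after the substitution $\zeta=ik$, and convert the second into $|u|^2$ by passing $\overline{\partial}_z$ under the integral and invoking the $\overline{\partial}$-equation for $\mu_2$. The only work remaining is the constant-tracking you already flag, which the paper carries out explicitly.
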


\begin{proof}
We compute from (\ref{eq:q.rep}), (\ref{eq:R.to.T}), and (\ref{eq:mu.to.psi})
that
\begin{align*}
\left(  \mathcal{Q}t\right)  (z) &  =\frac{2}{\pi}\overline{\partial}%
_{z}\left(  \int\overline{r(ik)}~e^{-i\left(  kz+\overline{k}\overline
{z}\right)  }\overline{\mu_{1}(z,ik)}~dA(k)\right)  \\
&  +\frac{2}{\pi}\overline{\partial}_{z}\left(  \int\overline{r(ik)}~\mu
_{2}(z,ik)~dA(k)\right)  \\
&  =T_{1}+T_{2}%
\end{align*}
Changing variables to $\zeta=ik$ in $T_{1}$ we recover%
\begin{align*}
T_{1} &  =\frac{2}{\pi}\overline{\partial}_{z}\left(  \int\overline{r(\zeta
)}~e^{\overline{\zeta}\overline{z}-\zeta z}~\overline{\mu_{1}(z,\zeta
)}~dA(\zeta)\right)  \\
&  =2\left(  \overline{\partial u}\right)  (z)\\
&  =2\left(  \partial u\right)  (z)
\end{align*}
where we have used (\ref{eq:u.rep}). Using (\ref{eq:mu.dbar}) in $T_{2}$ we
have%
\begin{align*}
T_{2} &  =\frac{1}{\pi}\int\overline{r(ik)}~u(z)~e^{-i\left(  kz+\overline
{k}\overline{z}\right)  }\overline{\mu_{1}(z,ik)}~dA(k)\\
&  =\frac{1}{\pi}u(z)\int r(\zeta)~e^{\overline{\zeta}\overline{z}-\zeta
z}~\overline{\mu_{1}(z,\zeta)}~dA(\zeta)\\
&  =\left\vert u(z)\right\vert ^{2}.
\end{align*}
Combining these computations gives the desired result.
\end{proof}

\begin{proof}
[Proof of Theorem \ref{thm:PP.and.LMSS}]For $u_{0}$ satisfying the hypotheses
and $q=2\partial u_{0}+\left\vert u_{0}\right\vert ^{2}$, we have by Lemma
\ref{lemma:R.to.T} that
\[
\left(  \mathcal{T}q_{0}\right)  (k)=-2\pi i~\overline{k}~\overline{r(ik)}%
\]
where $r=\mathcal{R}(u_{0})$, and hence%
\[
e^{-it\left(  k^{3}+\overline{k}^{3}\right)  }\left(  \mathcal{T}q_{0}\right)
(k)=-2\pi i~\overline{k}~\overline{\left(  e^{t\left(  \left(  \overline
{\diamond}\right)  ^{3}-\left(  \diamond\right)  ^{3}\right)  }r(\diamond
)\right)  (ik)}.
\]
We can now apply Lemma \ref{lemma:MI.to.Q} to conclude that%
\[
\mathcal{Q}\left(  e^{-it\left(  \left(  \diamond\right)  ^{3}+\left(
\overline{\diamond}\right)  ^{3}\right)  }\left(  \mathcal{T}q_{0}\right)
(\diamond)\right)  =\mathcal{MI}\left(  e^{t\left(  \left(  \overline
{\diamond}\right)  ^{3}-\left(  \diamond\right)  ^{3}\right)  }r(\diamond
)\right)
\]
as claimed.
\end{proof}

\appendix

\section{Schwarz Class Inverse Scattering for the mNV Equation}

\label{app:mNV}

In this appendix we develop the Schwarz class inverse theory for the
mNV\ equation, using freely the results and notation of \cite{P} with one
exception: we denote the potential by $u$ rather than $q$. Our main result is:

\begin{theorem}
\label{thm:mNV.Schwartz}Suppose that $u_{0}\in\mathcal{S}(\mathbb{R}^{2})$,
and let $\mathcal{R}$ and $\mathcal{I}$ be the scattering maps defined
respectively by (\ref{eq:r.rep})\ and (\ref{eq:u.rep}). Finally, define%
\[
u(t)=\mathcal{I}\left(  e^{t\left(  \left(  \diamond\right)  ^{3}-\left(
\overline{\diamond}\right)  ^{3}\right)  }(\mathcal{R}u_{0})(\diamond)\right)
.
\]
Then $u(t)$ is a classical solution of the modified Novikov-Veselov equation
(\ref{eq:mNV}).
\end{theorem}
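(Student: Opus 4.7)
My plan is to follow the classical inverse scattering schematic: preserve the Schwarz class under the explicit evolution of scattering data, derive a temporal Lax equation for the CGO eigenfunctions by uniqueness in the $\overline{\partial}_k$-problem, and then read mNV off by differentiating the reconstruction formula in $t$.

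\textbf{Step 1 (Schwarz-class preservation).} For $u_0 \in \mathcal{S}(\mathbb{R}^2)$, the mapping properties of $\mathcal{R}$ established in \cite{P} give $r_0 := \mathcal{R} u_0 \in \mathcal{S}(\mathbb{R}^2)$. Since the phase $t(k^3 - \overline{k}^3) = 2it\operatorname{Im} k^3$ is purely imaginary, the factor $\exp(t(k^3 - \overline{k}^3))$ is unimodular with $k$-derivatives bounded by polynomials in $|k|$; consequently $r(\cdot,t) \in \mathcal{S}(\mathbb{R}^2)$ for each $t$, and $\partial_t r = (k^3 - \overline{k}^3) r$ is Schwarz in $k$. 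Applying $\mathcal{I}$ together with its mapping properties from \cite{P} gives $u(\cdot,t) \in \mathcal{S}(\mathbb{R}^2)$ and legitimizes differentiation of (\ref{eq:u.rep}) in $t$ under the integral sign, so that $u$ is smooth on $\mathbb{R}^2 \times \mathbb{R}$ with Schwarz-class decay in $z$.

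\textbf{Step 2 (Temporal Lax equation).} The CGO vector $\nu = (\nu_1, \nu_2)$ satisfies both the $k$-$\overline{\partial}$-problem (\ref{eq:nu.dbar}) with scattering data $r(k,t)$ and a companion first-order system in the $z$ variable obtained from (\ref{eq:mu.dbar}) via (\ref{eq:nu.def}). I would seek a third-order differential operator $M = M(u, \overline{u}, \partial u, \ldots)$ in $z$, with principal part $\partial_z^3 - \overline{\partial}_z^3$, such that
\[
\Psi := \partial_t \nu - M\nu
\]
satisfies the homogeneous $\overline{\partial}_k$-system associated to (\ref{eq:nu.dbar}) and vanishes as $|k|\to\infty$. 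The idea is that $\overline{\partial}_k(\partial_t \nu)$ inherits the polynomial factor $(k^3 - \overline{k}^3)$ from $\partial_t r$, while the spatial equations allow one to rewrite powers of $k$ and $\overline{k}$ acting on $\nu$ as $\pm \partial_z^3$ and $\pm\overline{\partial}_z^3$ modulo corrections of lower order in $u$. Choosing the subprincipal part of $M$ so that these corrections cancel, the uniqueness of bounded solutions to (\ref{eq:nu.dbar}) forces $\Psi \equiv 0$, producing $\partial_t \nu = M\nu$.

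\textbf{Step 3 and main obstacle.} The reconstruction (\ref{eq:u.rep}) together with the large-$z$ expansion of Lemma \ref{lemma:3} expresses $u(z,t)$ through the leading $1/z$ behavior of $\nu_1, \nu_2$. Differentiating (\ref{eq:u.rep}) in $t$, substituting $\partial_t \nu = M\nu$, and integrating by parts in $z$ where needed, one obtains an identity of the form $\partial_t u = -(\partial^3 + \overline{\partial}^3) u - NL(u)$, which is precisely (\ref{eq:mNV}). The real work lies in Step 2: one must pin down the subprincipal coefficients of $M$ explicitly and check that the resulting $\Psi$ has the required vanishing at infinity in $k$. This amounts to careful commutation of $\partial_z^3$ and $\overline{\partial}_z^3$ past $\overline{\partial}_k$ using the spatial equations for $\nu$, and it is precisely this calculation that manufactures the specific nonlinear terms comprising $NL(u)$. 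Once $M$ is correctly identified, the uniqueness assertion for the $\overline{\partial}_k$-problem combined with the Schwarz-class control secured in Step 1 makes the remainder of the verification a routine, if lengthy, algebraic exercise.
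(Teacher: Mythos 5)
Your outline is coherent and describes the classical Lax-pair route, but it stops exactly where the theorem's content begins: the operator $M$ is never constructed, and the verification that the cancellations in $[\overline{\partial}_{k},M]$ produce precisely the nonlocal nonlinearity $NL(u)$ is deferred as a ``routine, if lengthy, algebraic exercise.'' That computation is not an afterthought --- it is the entire proof. In the paper the analogous work occupies all of Appendix \ref{app:mNV}: the recursive determination of the expansion coefficients $\nu_{1,\ell},\nu_{2,\ell}$ (and their counterparts $\nu^{\#}_{1,\ell},\nu^{\#}_{2,\ell}$ for the adjoint potential $u^{\#}(\cdot)=-\overline{u}(-\,\cdot\,)$) up to $\ell=3$, followed by the term-by-term cancellation of the seventh- and fifth-order terms using the identity (\ref{eq:invprod}). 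Moreover, without an explicit $M$ you cannot actually run the uniqueness argument of your Step 2: since $(\partial_{z}+k)\nu_{2}=\tfrac{1}{2}\overline{u}\nu_{1}$, a bare $\partial_{z}^{3}$ applied to $\nu$ produces terms growing like $|k|^{3}$, so the claim that $\Psi=\partial_{t}\nu-M\nu$ is bounded and vanishes as $|k|\to\infty$ can only be checked after the subprincipal (and necessarily nonlocal, e.g. involving $\overline{\partial}^{-1}(|u|^{2})$) coefficients of $M$ have been pinned down --- which in turn requires the large-$k$ expansion of $\nu$ to third order, i.e. essentially the same computation the paper performs.

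It is also worth flagging that the paper does not derive a temporal Lax equation at all. It differentiates the reconstruction formula directly, invokes the tangent-map formula (\ref{eq:qdot})--(\ref{eq:udot.final}) from \cite{P} (which brings in the adjoint solution $\nu^{\#}$, entirely absent from your proposal), and evaluates the integrals $\int k^{3}\overline{\partial}_{k}[\cdots]\,dA(k)$ by the residue formula (\ref{eq:residue}) as coefficients in the asymptotic expansion (\ref{eq:nu.asy}). This converts the whole problem into a finite recursion and avoids having to exhibit $M$ or prove a uniqueness statement for $\Psi$. Your route can in principle be completed, and your Step 1 on Schwarz-class preservation is fine, but as written the proposal contains the scaffolding and not the proof.
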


The proof follows the method of Beals-Coifman \cite{BC:1985,BC:1989,BC:1990}
and Sung \cite{Sung:1994} but necessitates some long computations.

\subsection{Scattering Solutions and Tangent Maps}

First we recall the solutions $\nu$ and $\widetilde{\nu}$ of the
$\overline{\partial}$ problem with $\overline{\partial}$-data determined by
the time-dependent coefficient $r$ and the formulas from \cite{P} for the
tangent maps.

We recall that $\nu=\left(  \nu_{1},\nu_{2}\right)  ^{T}$ is the unique
solution of the $\overline{\partial}$ problem%
\begin{align}
\overline{\partial}_{k}\nu_{1} &  =\frac{1}{2}e_{k}\overline{r}\overline
{\nu_{2}},\label{eq:nu.dbar.bis}\\
\overline{\partial}_{k}\nu_{2} &  =\frac{1}{2}e_{k}\overline{r}\overline
{\nu_{1}},\nonumber\\
\lim_{\left\vert k\right\vert \rightarrow\infty}\nu(z,k) &  =(1,0),\nonumber
\end{align}
where $r=\mathcal{R}(u)$. Here%
\[
e_{k}(z)=e^{\overline{k}\overline{z}-kz}.
\]
The function $\nu^{\#}=\left(  \nu_{1}^{\#},\nu_{2}^{\#}\right)  $ solves the
same problem but for $u^{\#}(~\cdot~)=-\overline{u}(-\cdot~~)$ and
$r^{\#}=\mathcal{R}(u^{\#})=-\overline{r}$ (see Lemma B.1 in \cite{P}). Thus%
\begin{align}
\overline{\partial}_{k}\nu_{1}^{\#} &  =-\frac{1}{2}e_{k}r\overline{\nu
_{2}^{\#}},\label{eq:tildenu.dbar}\\
\overline{\partial}_{k}\nu_{2}^{\#} &  =-\frac{1}{2}e_{k}\overline{r}%
\overline{\nu_{1}^{\#}},\nonumber\\
\lim_{\left\vert k\right\vert \rightarrow\infty}\nu^{\#}(z,k) &
=(1,0).\nonumber
\end{align}

The tangent map formula gives an expression for $u$ if $u=\mathcal{R}(r)$
where $r$ is a $C^{1}$-curve in $\mathcal{S}(\mathbb{R}^{2})$. Assuming the
law of evolution%
\[
\dot{r}=\left(  \overline{k}^{3}-k^{3}\right)  r
\]
and following the calculations in Appendix B of \cite{P}, we find that
\begin{equation}
u=2i(I_{1}+\overline{I_{2}}) \label{eq:qdot}%
\end{equation}
where%
\begin{align}
I_{1}  &  =\frac{1}{\pi}\int k^{3}\overline{\partial}_{k}\left[  \nu_{2}%
^{\#}(-z,k)\nu_{1}(z,k)\right]  ~dA(k),\label{eq:I1}\\
I_{2}  &  =-\frac{1}{\pi}\int k^{3}\overline{\partial}_{k}\left[  \nu_{1}%
^{\#}(-z,k)\nu_{2}(z,k)\right]  ~dA(k). \label{eq:I2}%
\end{align}

As in Appendix B of \cite{P}, we evaluate these integrals using the following
fact: if $g$ is a $\mathcal{C}^{\infty}$ function with asymptotic expansion%
\begin{equation}
g(k,\overline{k})\sim1+\sum_{\ell\geq0}\frac{g_{\ell}}{k^{\ell+1}}
\label{eq:k.asy}%
\end{equation}
as $\left\vert k\right\vert \rightarrow\infty$ then%
\begin{equation}
\lim_{R\rightarrow\infty}\left(  -\frac{1}{\pi}\int_{\left\vert k\right\vert
\leq R}k^{n}(\overline{\partial_{k}}g)(k)~dA(k)\right)  =g_{n}.
\label{eq:residue}%
\end{equation}
Using (\ref{eq:residue}) we get (noting the $-$ sign in (\ref{eq:I2}))%
\begin{align*}
I_{1}  &  =2~\left[  \nu_{1}(z,\diamond)\nu_{2}^{\#}(-z,\diamond)\right]
_{3}\\
\overline{I_{2}}  &  =2~\left[  \nu_{2}(z,\diamond)\nu_{1}^{\#}(-z,\diamond
)\right]  _{3}%
\end{align*}
so that%
\begin{equation}
\dot{u}=2\left\{  \left[  \nu_{1}(z,\diamond)\nu_{2}^{\#}(-z,\diamond)\right]
_{3}+\overline{\left[  \nu_{2}(z,\diamond)\nu_{1}^{\#}(-z,\diamond)\right]
_{3}}\right\}  \label{eq:udot.final}%
\end{equation}
Here $\left[  ~\diamond~\right]  _{n}$ denotes the coefficient of $k^{-n-1}$
in an asymptotic expansion of the form (\ref{eq:k.asy}). \ The
formulas\bigskip%
\begin{align*}
\left[  \nu_{1}(z,\diamond)\nu_{2}^{\#}(-z,\diamond)\right]  _{n}  &  =\left(
\nu_{n}^{\#}\right)  _{21}+\sum_{j=0}^{n-1}\left(  \nu_{n-j-1}^{\#}\right)
_{21}\left(  \nu_{j}\right)  _{11}\\
\left[  \nu_{2}(z,\diamond)\nu_{1}^{\#}(-z,\diamond)\right]  _{n}  &  =\left(
\nu_{n}\right)  _{12}+\sum_{j=0}^{n-1}\left(  \nu_{n-1-j}\right)  _{12}\left(
\nu_{j}^{\#}\right)  _{22}%
\end{align*}
will be used in concert with the residue formulae below to obtain the equation
of motion.

\subsection{Expansion Coefficients for $\nu$}

Following the method of Appendix C in \cite{P}, we can compute the additional
coefficients in the asymptotic expansion%
\begin{equation}
\nu\sim\left(  1,0\right)  +\sum_{\ell\geq0}k^{-(\ell+1)}\nu^{(\ell)}
\label{eq:nu.asy}%
\end{equation}
needed to compute $\dot{u}$ from the formula (\ref{eq:udot.final}). Let us set
$\nu^{(\ell)}=\left(  \nu_{1,\ell},\nu_{2,\ell}\right)  ^{T}$. We recall from
\cite{P} the `initial data'%
\begin{equation}
\nu_{1,0}=\frac{1}{4}\overline{\partial}^{-1}\left(  \left\vert u\right\vert
^{2}\right)  ,~~\nu_{2,0}=\frac{1}{2}\overline{u} \label{eq:nu.start}%
\end{equation}
and the recurrence relations%
\begin{align*}
\nu_{2,\ell}  &  =\frac{1}{2}\overline{u}\nu_{1,\ell-1}-\partial\nu_{2,\ell
-1},\\
\nu_{1,\ell}  &  =\frac{1}{2}P\left(  u\nu_{2,\ell}\right)  .
\end{align*}
The following formulas are a straightforward consequence.

\bigskip

$\ell=0$:%
\begin{align}
\nu_{1,0}  &  =\frac{1}{4}\overline{\partial}^{-1}\left(  \left\vert
u\right\vert ^{2}\right) \label{eq:nu.0.12}\\
\nu_{2,0}  &  =\frac{1}{2}\overline{u} \label{eq:nu.0.21}%
\end{align}

\bigskip

$\ell=1$:%
\begin{align}
\nu_{1,1}  &  =\frac{1}{16}\overline{\partial}^{-1}\left(  \left\vert
u\right\vert ^{2}\overline{\partial}^{-1}\left(  \left\vert u\right\vert
^{2}\right)  \right)  -\frac{1}{4}\overline{\partial}^{-1}\left(
u\partial\overline{u}\right) \label{eq:nu.1.12}\\
\nu_{2,1}  &  =\frac{1}{8}\overline{u}\overline{\partial}^{-1}\left(
\left\vert u\right\vert ^{2}\right)  -\frac{1}{2}\partial\overline{u}
\label{eq:nu.1.21}%
\end{align}

\bigskip

$\ell=2$:%
\begin{align}
\nu_{1,2}  &  =\frac{1}{64}\overline{\partial}^{-1}\left(  \left\vert
u\right\vert ^{2}\overline{\partial}^{-1}\left(  \left\vert u\right\vert
^{2}\overline{\partial}^{-1}\left(  \left\vert u\right\vert ^{2}\right)
\right)  \right) \label{eq:nu.2.11}\\
&  -\frac{1}{16}\left\{  \overline{\partial}^{-1}\left(  u\partial\left(
\overline{u}\overline{\partial}^{-1}\left(  \left\vert u\right\vert
^{2}\right)  \right)  \right)  +\overline{\partial}^{-1}\left(  \left\vert
u\right\vert ^{2}\overline{\partial}^{-1}\left(  u\partial\overline{u}\right)
\right)  \right\} \nonumber\\
&  +\frac{1}{4}\overline{\partial}^{-1}\left(  u\partial^{2}\overline
{u}\right) \nonumber\\
\nu_{2,2,}  &  =\frac{1}{32}\overline{u}\overline{\partial}^{-1}\left(
\left\vert u\right\vert ^{2}\overline{\partial}^{-1}\left(  \left\vert
u\right\vert ^{2}\right)  \right) \label{eq:nu.2.21}\\
&  -\frac{1}{8}\left\{  \partial\left(  \overline{u}\overline{\partial}%
^{-1}\left(  \left\vert u\right\vert ^{2}\right)  \right)  +\overline
{u}\overline{\partial}^{-1}\left(  u\partial\overline{u}\right)  \right\}
\nonumber\\
&  +\frac{1}{2}\partial^{2}\overline{u}\nonumber
\end{align}

\bigskip

$\ell=3$:%

\begin{align}
\nu_{2,3}  &  =\frac{1}{128}\overline{u}\overline{\partial}^{-1}\left(
\left\vert u\right\vert ^{2}\overline{\partial}^{-1}\left(  \left\vert
u\right\vert ^{2}\overline{\partial}^{-1}\left(  \left\vert u\right\vert
^{2}\right)  \right)  \right) \label{eq:nu.3.12}\\
&  -\frac{1}{32}\left\{  \overline{u}\overline{\partial}^{-1}\left(
u\partial\left(  \overline{u}\overline{\partial}^{-1}\left(  \left\vert
u\right\vert ^{2}\right)  \right)  \right)  +\overline{u}\overline{\partial
}^{-1}\left(  \left\vert u\right\vert ^{2}\overline{\partial}^{-1}\left(
u\partial\overline{u}\right)  \right)  \right. \nonumber\\
&  \left.  +\partial\left(  \overline{u}\overline{\partial}^{-1}\left(
\left\vert u\right\vert ^{2}\overline{\partial}^{-1}\left(  \left\vert
u\right\vert ^{2}\right)  \right)  \right)  \right\} \nonumber\\
&  +\frac{1}{8}\left\{  \overline{u}\overline{\partial}^{-1}\left(
u\partial^{2}\overline{u}\right)  +\partial^{2}\left(  \overline{u}%
\overline{\partial}^{-1}\left(  \left\vert u\right\vert ^{2}\right)  \right)
+\partial\left(  \overline{u}\overline{\partial}^{-1}\left(  u\partial
\overline{u}\right)  \right)  \right\} \nonumber\\
&  -\frac{1}{2}\partial^{3}\overline{u}\nonumber
\end{align}

\subsection{Expansion Coefficients for $\nu^{\#}$}

The solution $\nu^{\#}$ corresponds to the potential $-\overline{u}(-z)$. To
compute the corresponding residues for $\nu^{\#}\left(  -z,k\right)  $ we
therefore make the following substitutions in the formulas above:%
\begin{align*}
\overline{\partial}^{-1}  &  \rightarrow-\overline{\partial}^{-1}\\
\partial &  \rightarrow-\partial\\
u  &  \rightarrow-\lambda\overline{u}\\
\overline{u}  &  \rightarrow-\lambda u
\end{align*}
Thus the overall sign change is $\left(  -1\right)  ^{n_{u}+n_{\partial}}$
where $n_{u}$ is the number of factors of $u$ and $\overline{u}$, while
$n_{\partial}$ is the number of factors of $\partial$ and $\overline{\partial
}^{-1}$. There is also an overall factor of $\left(  \lambda\right)  ^{n_{u}}$
i.e. $\lambda$ if $n_{u}$ is odd, or $1$ if $n_{u}$ is even. Applying these
rules we obtain:

\bigskip

$\ell=0$:%
\begin{align}
\nu_{1,0}^{\#}  &  =-\frac{1}{4}\overline{\partial}^{-1}\left(  \left\vert
u\right\vert ^{2}\right) \label{eq:nu.tilde.0.12}\\
\nu_{2,0}^{\#}  &  =-\frac{1}{2}u \label{eq:nu.0.tilde.21}%
\end{align}

\bigskip

$\ell=1$:%
\begin{align}
\nu_{1,1}^{\#}  &  =\frac{1}{16}\overline{\partial}^{-1}\left(  \left\vert
u\right\vert ^{2}\overline{\partial}^{-1}\left(  \left\vert u\right\vert
^{2}\right)  \right)  -\frac{1}{4}\overline{\partial}^{-1}\left(  \overline
{u}\partial u\right) \label{eq:nu.tilde.1.12}\\
\nu_{2,1}^{\#}  &  =\frac{1}{8}u\overline{\partial}^{-1}\left(  \left\vert
u\right\vert ^{2}\right)  -\frac{1}{2}\partial u \label{eq:nu.tilde.1.21}%
\end{align}

\bigskip

$\ell=2$:%
\begin{align}
\nu_{1,2}^{\#}  &  =-\frac{1}{64}\overline{\partial}^{-1}\left(  \left\vert
u\right\vert ^{2}\overline{\partial}^{-1}\left(  \left\vert u\right\vert
^{2}\overline{\partial}^{-1}\left(  \left\vert u\right\vert ^{2}\right)
\right)  \right) \label{eq:nu.tilde.2.11}\\
&  +\frac{1}{16}\left\{  \overline{\partial}^{-1}\left(  \overline{u}%
\partial\left(  u\overline{\partial}^{-1}\left(  \left\vert u\right\vert
^{2}\right)  \right)  \right)  +\overline{\partial}^{-1}\left(  \left\vert
u\right\vert ^{2}\overline{\partial}^{-1}\left(  \overline{u}\partial
u\right)  \right)  \right\} \nonumber\\
&  -\frac{1}{4}\overline{\partial}^{-1}\left(  \overline{u}\partial
^{2}u\right) \nonumber\\
\nu_{2,2}^{\#}  &  =-\frac{1}{32}u\overline{\partial}^{-1}\left(  \left\vert
u\right\vert ^{2}\overline{\partial}^{-1}\left(  \left\vert u\right\vert
^{2}\right)  \right) \label{eq:nu.tilde.2.21}\\
&  +\frac{1}{8}\left\{  \partial\left(  u\overline{\partial}^{-1}\left(
\left\vert u\right\vert ^{2}\right)  \right)  +u\overline{\partial}%
^{-1}\left(  \overline{u}\partial u\right)  \right\} \nonumber\\
&  -\frac{1}{2}\partial^{2}u\nonumber
\end{align}

\bigskip

$\ell=3$:%

\begin{align}
\nu_{2,3}^{\#}  &  =\frac{1}{128}u\overline{\partial}^{-1}\left(  \left\vert
u\right\vert ^{2}\overline{\partial}^{-1}\left(  \left\vert u\right\vert
^{2}\overline{\partial}^{-1}\left(  \left\vert u\right\vert ^{2}\right)
\right)  \right) \label{eq:nu.tilde.3.12}\\
&  -\frac{1}{32}\left\{  u\overline{\partial}^{-1}\left(  \overline{u}%
\partial\left(  u\overline{\partial}^{-1}\left(  \left\vert u\right\vert
^{2}\right)  \right)  \right)  +u\overline{\partial}^{-1}\left(  \left\vert
u\right\vert ^{2}\overline{\partial}^{-1}\left(  \overline{u}\partial
u\right)  \right)  \right. \nonumber\\
&  \left.  +\partial\left(  u\overline{\partial}^{-1}\left(  \left\vert
u\right\vert ^{2}\overline{\partial}^{-1}\left(  \left\vert u\right\vert
^{2}\right)  \right)  \right)  \right\} \nonumber\\
&  +\frac{1}{8}\left\{  u\overline{\partial}^{-1}\left(  \overline{u}%
\partial^{2}u\right)  +\partial^{2}\left(  u\overline{\partial}^{-1}\left(
\left\vert u\right\vert ^{2}\right)  \right)  +\partial\left(  u\overline
{\partial}^{-1}\left(  \overline{u}\partial u\right)  \right)  \right\}
\nonumber\\
&  -\frac{1}{2}\partial^{3}u\nonumber
\end{align}

\subsection{Inverse Scattering Method for mNV}

We now compute the motion of the putative solution%
\[
u=\mathcal{I}r
\]
if the reflection coefficient evolves according to the law%
\begin{align*}
\dot{r} &  =-\left(  k^{3}-\overline{k}^{3}\right)  r\\
\left.  r\right\vert _{t=0} &  =\mathcal{R}u_{0}%
\end{align*}
From (\ref{eq:udot.final}) it is clear that we must compute $\left[  \nu
_{1}(z,\diamond)\nu_{2}^{\#}(-z,\diamond)\right]  _{3}$ and $\left[  \nu
_{2}(z,\diamond)\nu_{1}^{\#}(-z,\diamond)\right]  _{3}.$

First, we have%
\begin{equation}
\left[  \nu_{1}(z,\diamond)\nu_{2}^{\#}(-z,\diamond)\right]  _{3}=\nu
_{2,3}^{\#}+\nu_{2,2}^{\#}\nu_{1,0}+\nu_{2,1}^{\#}\nu_{1,1}+\nu_{2,0}^{\#}%
\nu_{1,2}. \label{eq:t1.sum}%
\end{equation}
From the formulas above we have%
\begin{align}
\nu_{2,2}^{\#}\nu_{1,0}  &  =-\frac{1}{128}u\overline{\partial}^{-1}\left(
\left\vert u\right\vert ^{2}\overline{\partial}^{-1}\left(  \left\vert
u\right\vert ^{2}\right)  \right)  \cdot\left(  \overline{\partial}%
^{-1}\left\vert u\right\vert ^{2}\right) \label{eq:t1.2}\\
&  +\frac{1}{32}\left\{  \partial\left(  u\overline{\partial}^{-1}\left(
\left\vert u\right\vert ^{2}\right)  \right)  \cdot\left(  \overline{\partial
}^{-1}\left(  \left\vert u\right\vert ^{2}\right)  \right)  +u\overline
{\partial}^{-1}\left(  \overline{u}\partial u\right)  \cdot\left(
\overline{\partial}^{-1}\left(  \left\vert u\right\vert ^{2}\right)  \right)
\right\} \nonumber\\
&  -\frac{1}{8}\partial^{2}u\cdot\overline{\partial}^{-1}\left(  \left\vert
u\right\vert ^{2}\right)  ,\nonumber
\end{align}%
\begin{align}
\nu_{2,1}^{\#}\nu_{1,1}  &  =\frac{1}{128}u\overline{\partial}^{-1}\left(
\left\vert u\right\vert ^{2}\right)  \cdot\overline{\partial}^{-1}\left(
\left\vert u\right\vert ^{2}\overline{\partial}^{-1}\left(  \left\vert
u\right\vert ^{2}\right)  \right) \label{eq:t1.3}\\
&  -\frac{1}{32}\left\{  u\overline{\partial}^{-1}\left(  \left\vert
u\right\vert ^{2}\right)  \cdot\overline{\partial}^{-1}\left(  u\partial
\overline{u}\right)  +\partial u\cdot\left(  \overline{\partial}^{-1}\left(
\left\vert u\right\vert ^{2}\overline{\partial}^{-1}\left(  \left\vert
u\right\vert ^{2}\right)  \right)  \right)  \right\} \nonumber\\
&  +\frac{1}{8}\partial u\cdot\overline{\partial}^{-1}\left(  u\partial
\overline{u}\right)  ,\nonumber
\end{align}
and%
\begin{align}
\nu_{2,0}^{\#}\nu_{1,2}  &  =-\frac{1}{128}u\overline{\partial}^{-1}\left(
\left\vert u\right\vert ^{2}\overline{\partial}^{-1}\left(  \left\vert
u\right\vert ^{2}\overline{\partial}^{-1}\left(  \left\vert u\right\vert
^{2}\right)  \right)  \right) \label{eq:t1.4}\\
&  +\frac{1}{32}\left\{  u\overline{\partial}^{-1}\left(  u\partial\left(
\overline{u}\overline{\partial}^{-1}\left(  \left\vert u\right\vert
^{2}\right)  \right)  \right)  +u\overline{\partial}^{-1}\left(  \left\vert
u\right\vert ^{2}\overline{\partial}^{-1}\left(  u\partial\overline{u}\right)
\right)  \right\} \nonumber\\
&  -\frac{1}{8}u\overline{\partial}^{-1}\left(  u\partial^{2}\overline
{u}\right)  .\nonumber
\end{align}
Using (\ref{eq:nu.tilde.3.12}) and (\ref{eq:t1.2})-(\ref{eq:t1.4}) in
(\ref{eq:t1.sum}) we see that seventh-order terms cancel, while fifth-order
terms sum to zero, as may be shown using the identity%
\begin{equation}
\overline{\partial}^{-1}f\cdot\overline{\partial}^{-1}g=\overline{\partial
}^{-1}\left(  f\overline{\partial}^{-1}g+g\overline{\partial}^{-1}f\right)  ,
\label{eq:invprod}%
\end{equation}
while third-order terms may be simplified using the same identity with $f=g$.
The result is
\begin{align}
\left[  \nu_{11}(z,\diamond)\widetilde{\nu}_{21}(-z,\diamond)\right]  _{3}  &
=\frac{3}{8}\left[  \left(  \partial u\right)  \cdot\left(  \overline
{\partial}^{-1}\left(  \partial\left(  \left\vert u\right\vert ^{2}\right)
\right)  \right)  \right]  +\frac{3}{8}\left[  u\overline{\partial}%
^{-1}\left(  \overline{u}\overline{\partial}u\right)  \right]
\label{eq:res3.1}\\
&  -\frac{1}{2}\partial^{3}u\nonumber
\end{align}

Next, we compute
\begin{equation}
\left[  \nu_{2}(z,\diamond)\nu_{1}^{\#}(-z,\diamond)\right]  _{3}=\nu
_{2,3}+\nu_{2,2}\nu_{1,0}^{\#}+\nu_{2,1}\nu_{1,1}^{\#}+\nu_{2,0}\nu_{1,2}%
^{\#}. \label{eq:t2.sum}%
\end{equation}
From the formulas above we have
\begin{align}
\nu_{2,2}\nu_{1,0}^{\#}  &  =-\frac{\lambda}{128}\overline{u}\overline
{\partial}^{-1}\left(  \left\vert u\right\vert ^{2}\overline{\partial}%
^{-1}\left(  \left\vert u\right\vert ^{2}\right)  \right)  \cdot
\overline{\partial}^{-1}\left(  \left\vert u\right\vert ^{2}\right)
\label{eq:t2.2}\\
&  +\frac{1}{32}\left\{  \partial\left(  \overline{u}\overline{\partial}%
^{-1}\left(  \left\vert u\right\vert ^{2}\right)  \right)  \cdot
\overline{\partial}^{-1}\left(  \left\vert u\right\vert ^{2}\right)
+\overline{u}\overline{\partial}^{-1}\left(  u\partial\overline{u}\right)
\cdot\left(  \overline{\partial}^{-1}\left(  \left\vert u\right\vert
^{2}\right)  \right)  \right\} \nonumber\\
&  -\frac{\lambda}{8}\partial^{2}\overline{u}\cdot\overline{\partial}%
^{-1}\left(  \left\vert u\right\vert ^{2}\right)  ,\nonumber
\end{align}%
\begin{align}
\nu_{2,1}\nu_{1,1}^{\#}  &  =\frac{1}{128}\overline{u}\overline{\partial}%
^{-1}\left(  \left\vert u\right\vert ^{2}\right)  \cdot\overline{\partial
}^{-1}\left(  \left\vert u\right\vert ^{2}\overline{\partial}^{-1}\left(
\left\vert u\right\vert ^{2}\right)  \right) \label{eq:t2.3}\\
&  -\frac{1}{32}\left\{  \overline{u}\overline{\partial}^{-1}\left(
\left\vert u\right\vert ^{2}\right)  \cdot\overline{\partial}^{-1}\left(
\overline{u}\partial u\right)  +\partial\overline{u}\cdot\overline{\partial
}^{-1}\left(  \left\vert u\right\vert ^{2}\overline{\partial}^{-1}\left(
\left\vert u\right\vert ^{2}\right)  \right)  \right\} \nonumber\\
&  +\frac{1}{8}\partial\overline{u}\cdot\overline{\partial}^{-1}\left(
\overline{u}\partial u\right)  ,\nonumber
\end{align}
and%
\begin{align}
\nu_{2,0}\nu_{1,2}^{\#}  &  =-\frac{1}{128}\overline{u}\overline{\partial
}^{-1}\left(  \left\vert u\right\vert ^{2}\overline{\partial}^{-1}\left(
\left\vert u\right\vert ^{2}\overline{\partial}^{-1}\left(  \left\vert
u\right\vert ^{2}\right)  \right)  \right) \label{eq:t2.4}\\
&  +\frac{1}{32}\left\{  \overline{u}\overline{\partial}^{-1}\left(
\overline{u}\partial\left(  u\overline{\partial}^{-1}\left(  \left\vert
u\right\vert ^{2}\right)  \right)  \right)  +\overline{u}\overline{\partial
}^{-1}\left(  \left\vert u\right\vert ^{2}\overline{\partial}^{-1}\left(
\overline{u}\partial u\right)  \right)  \right\} \nonumber\\
&  -\frac{1}{8}\overline{u}\overline{\partial}^{-1}\left(  \overline
{u}\partial^{2}u\right)  .\nonumber
\end{align}
Using (\ref{eq:nu.3.12}) and (\ref{eq:t2.2})-(\ref{eq:t2.4}) in
(\ref{eq:t2.sum}), noting the cancellation of fifth-order terms, we obtain%
\begin{align}
\left[  \nu_{2}(z,\diamond)\nu_{1}^{\#}(-z,\diamond)\right]  _{3}  &
=\frac{3}{8}\left[  \overline{u}\overline{\partial}^{-1}\left(  \partial
\left(  u\partial\overline{u}\right)  \right)  \right]  +\frac{3}{8}\left(
\partial\overline{u}\right)  \cdot\partial\overline{\partial}^{-1}\left(
\left\vert u\right\vert ^{2}\right) \label{eq:res3.2.pre}\\
&  -\frac{1}{2}\partial^{3}\overline{u}\nonumber
\end{align}
or upon complex conjugation%
\begin{align}
\overline{\left[  \nu_{2}(z,\diamond)\nu_{1}^{\#}(-z,\diamond)\right]  _{3}}
&  =\frac{3}{8}u\partial^{-1}\left(  \overline{\partial}\left(  \overline
{u}\overline{\partial}u\right)  \right)  +\frac{3}{8}\left(  \overline
{\partial}u\right)  \cdot\partial^{-1}\left(  \overline{\partial}\left(
\left\vert u\right\vert ^{2}\right)  \right) \label{eq:res3.2}\\
&  -\frac{1}{2}\overline{\partial}^{3}u\nonumber
\end{align}

Using these equations in (\ref{eq:udot.final}), we obtain the mNV equation:%
\begin{align}
\frac{\partial u}{\partial t}  &  =-\partial^{3}u-\overline{\partial}%
^{3}u\label{eq:mNV.proved}\\
&  +\frac{3}{4}\left(  \partial\overline{u}\right)  \cdot\left(
\overline{\partial}\partial^{-1}\left(  \left\vert u\right\vert ^{2}\right)
\right)  +\frac{3}{4}\left(  \overline{\partial}u\right)  \cdot\left(
\overline{\partial}\partial^{-1}\left(  \left\vert u\right\vert ^{2}\right)
\right) \nonumber\\
&  +\frac{3}{4}\overline{u}\overline{\partial}\partial^{-1}\left(
\overline{u}\overline{\partial}u\right)  +\frac{3}{4}u\partial^{-1}\left(
\overline{\partial}\left(  \overline{u}\overline{\partial}u\right)  \right)
.\nonumber
\end{align}

\end{document}